\definecolor{melon}{rgb}{0.4, 0.2, 1}
\definecolor{darkspringgreen}{rgb}{0.14, 0.7, 0.3}
\definecolor{amethyst}{rgb}{0.6, 0.4, 0.8}
\definecolor{darkspringgreen}{rgb}{0.09, 0.45, 0.27}
\newcommand\blfootnote[1]{%
  \begingroup
  \renewcommand\thefootnote{}\footnote{#1}%
  \addtocounter{footnote}{-1}%
  \endgroup
}
\newtheorem{thm}{Theorem}[section]
\newtheorem{cor}[thm]{Corollary}
\newtheorem{prop}[thm]{Proposition}
\newtheorem{lem}[thm]{Lemma}
\newtheorem{prob}[thm]{Problem}
\theoremstyle{definition}
\newtheorem{defn}[thm]{Definition}
\newtheorem{exmp}[thm]{Example}
\theoremstyle{remark}
\newtheorem{rem}[thm]{Remark}
\let\c@equation\c@thm
\numberwithin{equation}{section}
\newcommand*\bigcdot{\mathpalette\bigcdot@{.5}}
\newcommand*\bigcdot@[2]{\mathbin{\vcenter{\hbox{\scalebox{#2}{$\m@th#1\bullet$}}}}}
\def\subsection{\@startsection{subsection}{3}%
  \z@{.5\linespacing\@plus.7\linespacing}{.1\linespacing}%
  {\bfseries}}
\newcommand{\Z}{\mathbb{Z}}
\newcommand{\R}{\mathbb{R}}
\newcommand{\calA}{\mathcal{A}}
\newcommand{\calC}{\mathcal{C}}
\newcommand{\calL}{\mathcal{L}}
\newcommand{\calP}{\mathcal{P}}
\newcommand{\calT}{\mathcal{T}}
\newcommand{\frakd}{\mathfrak{d}}
\newcommand{\frakm}{\mathfrak{m}}
\newcommand{\frakD}{\mathfrak{D}}
\renewcommand{\phi}{\varphi}
\newcommand{\Th}{\text{th}}
\DeclareMathOperator{\sh}{sh}
\DeclareMathOperator{\cas}{cas}
\DeclareMathOperator{\Supp}{Supp}
\DeclareMathOperator{\Mono}{Mono}
\DeclareMathOperator{\cut} { \setminus}
\DeclareMathOperator{\ttheta}{\widetilde \theta}
\DeclareMathOperator{\CP}{CP}
\DeclareMathOperator{\BL}{BL}
\DeclareMathOperator{\pro}{pro}
\newcommand{\init}{\text{in}}
\newcommand{\resp}{resp. }
\@date \else {\vskip3ex \centering\footnotesize\@date\par\vskip1ex}\fi
\else \@footnotetext{\@setdate}\fi}
\def\l@subsection{\@tocline{2}{0pt}{1pc}{5pc}{}} \def\l@subsection{\@tocline{2}{0pt}{2pc}{6pc}{}} \makeatother
\renewcommand\vec[1]{\overrightarrow{#1}}
\def\AB#1{(\textcolor{blue}{AB: #1})}
\title[broken lines and compatible pairs for rank 2 quantum cluster algebras]{Broken lines and compatible pairs for rank 2 quantum cluster algebras}
\author[Burcroff]{Amanda Burcroff}
\address{\hspace{-0.4cm}Department of Mathematics, Harvard University, Cambridge, MA, USA 02138}
\email{\href{mailto:aburcroff@math.harvard.edu}{aburcroff@math.harvard.edu}}
\author[Lee]{Kyungyong Lee}
\address{Department of Mathematics, University of Alabama, Tuscaloosa, AL 35487, USA and Korea
Institute for Advanced Study, Seoul 02455, Republic of Korea}
\email{\href{mailto:kyungyong.lee@ua.edu}{kyungyong.lee@ua.edu} \& \href{mailto:klee1@kias.re.kr}{klee1@kias.re.kr}}
\subjclass[2020]{Primary 13F60; Secondary 17B37, 05E10}
\begin{document}

\begin{abstract}
   There have been several combinatorial constructions of universally positive bases in cluster algebras, and these same combinatorial objects play a crucial role in the known proofs of the famous positivity conjecture for cluster algebras.  The greedy basis was constructed in rank $2$ by Lee--Li--Zelevinsky using compatible pairs on Dyck paths.  The theta basis, introduced by Gross--Hacking--Keel--Kontsevich, has elements expressed as a sum over broken lines on scattering diagrams.  It was shown by Cheung--Gross--Muller--Musiker--Rupel--Stella--Williams that these bases coincide in rank $2$ via algebraic methods, and they posed the open problem of giving a combinatorial proof by constructing a (weighted) bijection between compatible pairs and broken lines. 
   
   We construct a quantum-weighted bijection between compatible pairs and broken lines for the quantum type $A_2$ and the quantum Kronecker cluster algebras.  By specializing the quantum parameter, this handles the problem of Cheung \emph{et al.}\;for skew-symmetric cluster algebras of finite and affine type.  For cluster monomials in skew-symmetric rank-$2$ cluster algebras, we construct a quantum-weighted bijection between positive compatible pairs (which comprise almost all compatible pairs) and broken lines of negative angular momentum.
\end{abstract}

\maketitle
\setcounter{tocdepth}{1}
\tableofcontents

\section{Introduction} \label{sec: intro}
\emph{Cluster algebras}, initially introduced by Fomin and Zelevinsky \cite{FZ} as an algebraic framework for investigating dual canonical bases in semisimple groups, have evolved into a rich field with applications spanning combinatorics, algebraic geometry, and representation theory.   A cluster algebra of rank $n$ is constructed combinatorially from certain elements called \emph{cluster variables}, each of which can be expressed as a Laurent polynomial in $n$ \emph{initial cluster variables}.  One amazing property of these Laurent polynomials is that they have {\bf positive} integer coefficients.  This positivity property was conjectured by Fomin and Zelevinsky in 2002, and this conjecture remained open for over 10 years before being resolved by Lee--Schiffler \cite{LSpositive} (for cluster algebras from quivers) and Gross--Hacking--Keel--Kontsevich \cite{GHKK} (for cluster algebras of geometric type), as well as Davison \cite{davison2018positive} and Davison--Mandel \cite{davison2021strong} (for quantum cluster algebras from quivers).  The first two proofs rely upon expressing rank two cluster variables as a sum over combinatorial objects.  Our work focuses on establishing combinatorial connections between the two classes of objects that appear in these proofs of positivity: \emph{compatible pairs} corresponding to \emph{greedy basis} elements and \emph{broken lines} corresponding to \emph{theta basis} elements.  Though we work in the more general setting of \emph{quantum cluster algebras}, our results are new even in the classical setting. 

Lee, Li, and Zelevinsky \cite{LLZ} defined the \emph{greedy basis} for rank-$2$ cluster algebras, a basis consisting of indecomposable positive elements including the cluster monomials.  They provided a combinatorial formula \cite[Theorem 11]{LLZ} for the Laurent expansion of each greedy basis element as a sum over \emph{compatible pairs} (see, for example, \autoref{fig: cascade example}), which are pairs $(S_1,S_2)$ of edge sets in a maximal Dyck path where the set $S_1$ of horizontal edges and the set $S_2$ of vertical edges satisfy a compatibility condition. This expansion formula was later used in Lee and Schiffler's proof of positivity for cluster algebras from quivers \cite{LS3positive,LSpositive}. In the case of cluster variables, compatible pairs are in correspondence with certain colored subpaths of Dyck paths \cite{LS, Lin, Bur}. Rupel subsequently provided a non-commutative analogue of this expansion formula specifically for the cluster variable case, and this formula specializes to the quantum rank-$2$ cluster algebra setting \cite[Corollary 5.4]{Rup2}.  
 
 Gross, Hacking, Keel, and Kontsevich \cite{GHKK} proved the positivity property for cluster algebras of geometric type by establishing a novel connection between cluster algebras and \emph{scattering diagrams}, which arose earlier in the study of mirror symmetry \cite{GS, KS}.  They gave another expansion formula for cluster variables (in cluster algebras of arbitrary rank) as a sum over weights of piecewise linear curves called \emph{broken lines} on cluster scattering diagrams (see, for example, \autoref{fig: broken line q weight}).  This approach allowed them to construct the \emph{theta bases} for cluster algebras\footnote{More precisely, the theta basis is a basis for an algebra between the ordinary and upper cluster algebra. In the case of rank 2, the ordinary and upper cluster algebra coincide, so the theta basis is actually a basis for the cluster algebra.}.  Though the constructions appear rather different, for rank-$2$ cluster algebras the theta basis is the same as the greedy basis.
 \begin{thm}[Cheung, Gross, Muller, Musiker, Rupel, Stella, and Williams {\cite[Theorem 1.1]{CGM}}]\label{thm: greedy equals theta}
     The rank-$2$ greedy basis and theta basis coincide.
 \end{thm}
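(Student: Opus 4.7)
The plan is to use the uniqueness characterization of the greedy basis established in \cite{LLZ}.  Namely, for each $(a_1, a_2) \in \Z^2$, the greedy element $x[a_1, a_2]$ is the unique universally positive Laurent polynomial in the cluster algebra whose expansion in the initial cluster has pointed form $x_1^{-a_1} x_2^{-a_2}\bigl(1 + \text{strictly higher order in } x_1^b, x_2^c\bigr)$ and which is indecomposable, meaning it cannot be written as a nontrivial sum of two universally positive elements.  Establishing the theorem therefore reduces to verifying that the theta basis satisfies these same two conditions, pointed form and indecomposable positivity, after a suitable matching of indices.

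First I would match the indexing sets.  In rank $2$, both bases are naturally indexed by $\Z^2$: the greedy basis by the pair $(a_1, a_2)$ of denominator exponents, and the theta basis by the g-vector $m$ of its pointed monomial.  A direct computation using the definition of g-vectors on the initial seed and the sign conventions of \cite{LLZ} yields an explicit bijection $m \mapsto (a_1(m), a_2(m))$ under which $\vartheta_m$ has Laurent expansion beginning with $x_1^{-a_1} x_2^{-a_2}$, because the trivial broken line contributes the monomial $z^m$ while every other contributing broken line bends across scattering walls and hence carries a monomial with strictly smaller exponents in the initial chart.

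Next I would verify the two required properties.  Positivity is essentially immediate from the construction of broken lines: each broken line carries a monomial with positive integer coefficient, so $\vartheta_m$ is manifestly a positive Laurent polynomial in the initial chart, and universal positivity follows by invoking the chamber-independence of theta functions and the positivity statement of \cite{GHKK}.  For indecomposability I would argue by contradiction: if $\vartheta_m = f + g$ with $f, g$ nonzero and universally positive, then matching pointed leading terms in the initial chart forces one of them --- say $f$ --- to carry the monomial $z^m$ with coefficient $1$, and a careful analysis of the Newton polygon swept out by broken lines (bounded on one side by the pointed vertex and on the other by the asymptotic directions of the two outgoing cluster rays) forces the support of $g$ to escape this polygon, a contradiction.

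The main obstacle is this indecomposability step, since broken lines live on the scattering diagram whereas the greedy basis is characterized intrinsically inside the cluster algebra; the crux is to translate the geometric support constraint on broken lines into the algebraic support constraint defining the greedy basis.  One way to sidestep a direct indecomposability argument is instead to show that the broken-line generating function satisfies the explicit LLZ recursion by grouping broken lines according to their outermost bend, producing a natural splitting that one hopes to match term-by-term with the recursive definition of the greedy coefficients $e(p,q)$.  Proving that these two recursions agree is delicate and is, in spirit, exactly the content of the bijection between compatible pairs and broken lines that the present paper sets out to construct.
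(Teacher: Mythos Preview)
This theorem is not proved in the present paper; it is quoted from \cite{CGM}, and the paper only summarizes their argument in the paragraph following the statement. According to that summary, the proof in \cite{CGM} does not go through indecomposability. Rather, they first show that any \emph{pointed} element of the rank-$2$ cluster algebra with the same \emph{support} as a greedy basis element must be a scalar multiple of it, and then they establish (via restrictions on broken-line behavior) that each theta function has exactly the support of the corresponding greedy element. The pointed normalization then fixes the scalar.

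Your proposal is in the same algebraic spirit --- reduce to a uniqueness characterization of the greedy basis and check that theta functions satisfy it --- but you invoke a different criterion (universally positive, pointed, and indecomposable) rather than the support-matching one. The gap is in your indecomposability step. Your Newton-polygon sketch asserts that if $\vartheta_m = f + g$ with $f$, $g$ universally positive and $f$ carrying the pointed monomial, then the support of $g$ must escape a polygon swept out by the broken lines. But nothing you have said rules out the possibility that the support of $\vartheta_m$ strictly contains the support of the greedy element, in which case $g$ could sit entirely inside that polygon on the extra monomials. To close this you would need to know a priori that the support of $\vartheta_m$ is no larger than that of the greedy element --- and that is precisely the support-matching lemma that \cite{CGM} proves directly. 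So your route, as written, collapses back onto theirs at the crucial step. Your closing suggestion (group broken lines by outermost bend and match the resulting recursion to the LLZ recursion for greedy coefficients) is exactly the combinatorial program that the present paper pursues.
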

 
The proof of \autoref{thm: greedy equals theta} given in \cite{CGM} is algebraic rather than combinatorial.  In particular, the authors show that any \emph{pointed element} of a rank-$2$ cluster algebra with the same support as a greedy basis element must be a scalar multiple of it.  Since the theta basis elements are pointed, the authors proved restrictions on the broken line behavior that implied their support must match that of a greedy element.  This approach only handles the support of the theta basis elements rather than treating broken lines individually, and hence does not yield a combinatorial approach for understanding the connection between broken lines and compatible pairs. Cheung, Gross, Muller, Musiker, Rupel, Stella, and Williams thus posed the open problem of finding a combinatorial explanation for this phenomenon. 
 
 \begin{prob}[{\cite[Remark 5.6]{CGM}}]\label{prob: comb proof} Find a combinatorial proof of \autoref{thm: greedy equals theta} by constructing an explicit (weighted) bijection between broken lines and compatible pairs. \end{prob}

 \emph{Quantum cluster algebras} were introduced by Berenstein and Zelevinsky \cite{BZ} as non-commutative deformations of cluster algebras and are related to canonical bases in quantum groups.  We work inside the \emph{quantum torus} ${\mathcal{T} := \Z[q^{\pm 1}]\langle X_1^{\pm 1}, X_2^{\pm 1} : X_1X_2 = q^2X_2X_1\rangle}$. The \emph{quantum rank-$2$ $r$-Kronecker cluster algebra} $\calA_q(r,r)$ is the $\Z[q^{\pm 1}]$-subalgebra of the skew field of fractions of $\mathcal{T}$ generated by the \emph{quantum cluster variables} $\{X_n\}_{n \in \Z}$, which follow the recursion $X_{n+1}X_{n-1} = q^{r}X_n^r + 1$. The (classical) $r$-Kronecker cluster algebra $\calA(r,r)$ is obtained from $\calA_q(r,r)$ by specializing the parameter $q$ to $1$, and the $r$-Kronecker cluster algebras comprise all skew-symmetric rank-$2$ cluster algebras.  For more background on cluster algebras, see \cite{FWZ}.

 The greedy basis for rank-$2$ cluster algebras was extended to the quantum rank-$2$ setting by Lee, Li, Rupel, and Zelevinsky \cite{LLRZ}, though a quantum weighting on compatible pairs has only been constructed in the cluster variable case \cite{Rup2}.  The quantum theta basis was recently constructed by Davison and Mandel \cite{davison2021strong}, and they show that this basis satisfies the \emph{strong positivity property}.  Both the quantum greedy and quantum theta bases contain the \emph{quantum cluster monomials}, i.e., elements of the form $q^{\alpha\beta}X_n^\alpha X_{n+1}^\beta$ in $\calA_q(r,r)$ for nonnegative integers $\alpha,\beta$.  It is suggested by Davison and Mandel, though not yet proved, that the quantum rank-$2$ greedy and theta bases coincide.  

Our main results connect almost all objects involved in the Lee--Li--Zelevinsky and Gross--Hacking--Keel--Kontsevich formulas for the cluster monomials in the quantum $r$-Kronecker cluster algebra. We say that a map from compatible pairs to broken lines is a \emph{$q$-weighted bijection} if the sum of quantum weights of compatible pairs in the inverse image of each broken line equals the quantum weight of the broken line. 

In the case of the quantum \emph{type $A_2$} cluster algebra  $\calA_q(1,1)$ and the quantum \emph{Kronecker} cluster algebra $\calA_q(2,2)$, we can construct a $q$-weighted bijection between all compatible pairs corresponding to greedy basis elements and all broken lines corresponding to theta basis elements.  The greedy and theta bases for $\calA_q(1,1)$ consist entirely of quantum cluster monomials, so the two bases coincide.  However, there are elements of the quantum greedy and theta bases of $\calA_q(2,2)$ that are not quantum cluster monomials.  Building off prior work of \cite{mandel2023bracelets} and \cite{DX}, we show that the quantum greedy basis and quantum theta basis coincide for $\calA_q(2,2)$ (see \autoref{subsec: bases}).

We additionally describe a quantum weighting on the compatible pairs corresponding to greedy basis elements of $\calA_q(2,2)$ that are not cluster monomials.  Along with Rupel's quantum weighting for compatible pairs, this gives a quantum weighting on all compatible pairs corresponding to greedy basis elements of $\calA_q(2,2)$.  We then introduce the \emph{cascade} of a compatible pair and use this to construct a map to the broken lines for $\calA_q(2,2)$ that respects the quantum weights.

\begin{thm}[see \autoref{thm: phi q weighted bijection}, \autoref{thm: phi q weighted bijection kron pos}, and \autoref{thm: phi kron theta bijection}]
For $\calA_q(r,r)$ where $r = 1$ or $2$, there is an explicit $q$-weighted bijection between compatible pairs corresponding to quantum greedy basis elements and broken lines corresponding to quantum theta basis elements.  
\end{thm}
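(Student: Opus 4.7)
The plan is to construct an explicit combinatorial map $\phi$ from compatible pairs to broken lines, case by case, and verify that it is a $q$-weighted bijection. The statement decomposes into three natural subclaims, corresponding to the three cited theorems: (a) the entire bijection for the quantum type $A_2$ algebra $\calA_q(1,1)$; (b) the bijection for cluster-monomial elements of $\calA_q(2,2)$; and (c) the bijection for non-cluster-monomial greedy basis elements of $\calA_q(2,2)$ (so-called ``imaginary'' greedy elements).

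For case (a), I would exploit the fact that $\calA_q(1,1)$ has only finitely many cluster variables, and that both the greedy and theta bases consist entirely of quantum cluster monomials $q^{\alpha\beta}X_n^\alpha X_{n+1}^\beta$. Fixing such a monomial, one enumerates the compatible pairs producing it using the Lee--Li--Zelevinsky formula and the broken lines producing it from the scattering diagram of $A_2$ (which has only finitely many walls), and then exhibits an explicit bijection matching Laurent monomials and coefficients. The finite, highly symmetric structure should make this step essentially a direct check.

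For cases (b) and (c), the key new tool is the \emph{cascade} of a compatible pair: a combinatorial decomposition of the edge sets $(S_1,S_2)$ into maximal compatible ``blocks'' along the Dyck path. Each block in the cascade should correspond to one bend of a broken line, with the Dyck-path geometry dictating the slope at which each wall is crossed and in what order; $\phi$ then sends $(S_1,S_2)$ to the broken line whose sequence of wall-crossings is prescribed by its cascade. For case (b), one uses Rupel's quantum weighting on compatible pairs and the GHKK weighting on broken lines and checks that $\phi$ intertwines them monomial by monomial. For case (c), one uses the new quantum weighting on compatible pairs introduced earlier in the paper, and combines $\phi$ with the identification of the quantum greedy and theta bases of $\calA_q(2,2)$ discussed in \autoref{subsec: bases}, which builds on \cite{mandel2023bracelets,DX}.

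The main obstacle is case (c). In contrast to the cluster-monomial setting, the fibers of $\phi$ over a given broken line can be nontrivial, so the quantum weights of compatible pairs must \emph{sum} correctly to the weight of the broken line. This forces one to engineer the weighting on compatible pairs so that a subtle cancellation or generating-function identity holds in each fiber. I expect the bulk of the technical work to lie in a careful fiberwise analysis of the cascade, most likely via a recursion that matches the scattering-diagram side's production of $q$-powers (coming from the Kronecker wall-function and the bending rule) against a sum over compatible pairs whose cascades share a common shape. The non-cluster-monomial direction also requires verifying surjectivity and injectivity separately, since one cannot simply count on both sides and invoke a rank comparison as in case~(a).
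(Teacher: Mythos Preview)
Your plan misidentifies both what the cascade is and how the map $\phi$ is actually built, and as a result the central mechanism of the proof is missing.

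In the paper, the cascade of $S_2$ is \emph{not} a decomposition of $(S_1,S_2)$ into ``blocks''; it is a pairing that assigns to each $\nu_j\in S_2$ a sequence of $r$ horizontal edges (\autoref{defn: cascade}). Its role is purely local: it singles out which vertical edges are ``overflowing'' (those whose $r$-cascade-paired horizontal edge lies in $S_1$). The map $\phi$ is then defined not by reading off bends from blocks, but by \emph{iterating} a map $\ttheta$ (or $\theta$ in the imaginary case) that sends a compatible pair on $\calP(\ell,h)$ to one on a strictly smaller Dyck path $\calP(\ell',h')$; the sequence $b_i=|S_2^{(i)}|$ of iterates determines the bending multiplicities (\autoref{defn: phi}). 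The heart of the argument is then a pair of matching recursions: one for $|\CP(c_{n+1},c_n,a,b)|_q$ obtained by peeling off one application of $\ttheta$ (\autoref{lem: cp recursion}), and one for $|\BL_-(c_{n+1},c_n,a,b)|_q$ obtained by peeling off one wall (\autoref{lem: broken line recursion}). Proving these recursions agree requires showing that $\ttheta$ preserves Rupel's quantum weight, which is the technical core (\autoref{lem: add shadowed}, \autoref{lem: add shadowed left-aligned}, \autoref{lem: ttheta quantum preservation}). None of this structure appears in your outline; ``each block corresponds to one bend'' is not how the correspondence works, and a direct fiberwise verification without the recursive reduction would be intractable.

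Two smaller points. First, your case~(a) cannot be a ``direct finite check'': there are infinitely many cluster monomials $q^{\alpha\beta}X_n^\alpha X_{n+1}^\beta$ even for $r=1$, so you still need a uniform argument; the paper handles $r=1$ as a special case of the same $\ttheta$-iteration machinery. Second, you have inverted the difficulty of case~(c). In the paper, once the quantum weight on $\CP(m,m)$ is \emph{defined} recursively via $\theta$ (\autoref{defn: kron quantum weight}), the $q$-weighted bijection $\phi$ is essentially tautological (\autoref{thm: phi kron theta bijection}); no subtle cancellation identity is needed. The genuinely hard case is~(b), where the quantum weight is Rupel's fixed formula and one must prove, rather than define into existence, that the recursion respects it.
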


By specializing the parameter $q$ to $1$, this yields a solution to \autoref{prob: comb proof} for the type $A_2$ cluster algebra and the Kronecker cluster algebra. 

\begin{cor}
For the type $A_2$ cluster algebra $\calA(1,1)$ and the Kronecker cluster algebra $\calA(2,2)$, there is an explicit (weighted) bijection between compatible pairs and broken lines corresponding to theta basis elements.  
\end{cor}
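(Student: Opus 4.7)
The plan is to obtain the corollary as a direct specialization of the $q$-weighted bijections provided by the main theorems (\autoref{thm: phi q weighted bijection}, \autoref{thm: phi q weighted bijection kron pos}, and \autoref{thm: phi kron theta bijection}). Since those theorems handle both $r=1$ and $r=2$ at the quantum level, no further combinatorial construction is needed; the only substantive task is checking that all the ingredients behave well under the ring homomorphism $\Z[q^{\pm 1}] \to \Z$ sending $q \mapsto 1$.

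First, I would recall that the quantum $r$-Kronecker cluster algebra $\calA_q(r,r)$ specializes, under $q\mapsto 1$, to the classical $r$-Kronecker cluster algebra $\calA(r,r)$, and that this specialization sends the quantum greedy basis of Lee--Li--Rupel--Zelevinsky to the Lee--Li--Zelevinsky greedy basis, and (by the results of Davison--Mandel) sends the quantum theta basis to the theta basis of Gross--Hacking--Keel--Kontsevich. Consequently, for every greedy basis element the quantum weights of its compatible pairs (in the sense of Rupel for cluster monomials, together with the weighting introduced for non-cluster-monomial greedy elements in the $\calA_q(2,2)$ case) specialize to the classical weights appearing in the Lee--Li--Zelevinsky formula, and the quantum weight of a broken line specializes to its classical weight in the Gross--Hacking--Keel--Kontsevich expansion.

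Next, I would observe that the map $\phi$ from compatible pairs to broken lines constructed for \autoref{thm: phi q weighted bijection}, \autoref{thm: phi q weighted bijection kron pos}, and \autoref{thm: phi kron theta bijection} is purely combinatorial: it takes a compatible pair, forms its cascade, and produces a broken line without reference to the parameter $q$. Therefore the same map $\phi$ serves as a candidate weighted bijection in the classical setting. Applying the $q$-weighted bijection property — that the sum of quantum weights of compatible pairs in $\phi^{-1}(\gamma)$ equals the quantum weight of $\gamma$ — and then specializing both sides at $q=1$ yields the corresponding identity for classical weights. This gives the required (weighted) bijection between compatible pairs and broken lines for the basis elements of $\calA(1,1)$ and $\calA(2,2)$.

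The main (and essentially only) obstacle is the bookkeeping verification that the weighting conventions match after specialization: one must check that Rupel's quantum weighting specializes to the Lee--Li--Zelevinsky weighting on compatible pairs for cluster monomials, that the new weighting introduced here for the non-cluster-monomial greedy basis elements of $\calA_q(2,2)$ specializes to the Lee--Li--Zelevinsky weighting in those cases, and that the Gross--Hacking--Keel--Kontsevich broken line weights arise as the $q=1$ specialization of the quantum broken line weights used in $\calA_q(2,2)$. All of these are built into the definitions adopted in the earlier sections, so the corollary follows immediately once those compatibilities are noted.
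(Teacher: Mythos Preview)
Your proposal is correct and takes essentially the same approach as the paper: the corollary is obtained simply by specializing the $q$-weighted bijections of the main theorems at $q=1$, using that the map $\phi$ is defined combinatorially without reference to $q$. The paper in fact treats this as immediate and gives no separate proof beyond the phrase ``By specializing the parameter $q$ to $1$''; your additional bookkeeping about weighting conventions is more detail than the paper supplies, but it is accurate and does no harm.
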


While \autoref{prob: comb proof} seems quite difficult for $\calA(r,r)$ where $r > 2$, we can construct such a $q$-weighted bijection for a subclass of compatible pairs that arose in Lee and Schiffler's proof of the positivity property \cite[Theorem 3.22]{LSpositive}.  The \emph{positive compatible pairs} are the compatible pairs $(S_1,S_2)$ where $r|S_2|$ does not exceed the horizontal length of the Dyck path.\footnote{Without loss of generality, we assume that the horizontal length of the Dyck path is no less than the vertical length.}  Asymptotically, almost all compatible pairs corresponding to (quantum) cluster monomials are positive (see \autoref{lem: almost all}).  The positive compatible pairs correspond to broken lines with \emph{negative angular momentum} (see \autoref{subsec: ang momentum} for details).  For the (quantum) cluster monomials, these are precisely the broken lines that do not cross over the ``Badlands'' region of the scattering diagram.

\begin{thm}[see \autoref{thm: phi q weighted bijection}]
For quantum cluster monomials in $\calA_q(r,r)$ where $r > 2$, there is an explicit $q$-weighted bijection between positive compatible pairs (which comprise almost all compatible pairs) and broken lines of negative angular momentum.  
\end{thm}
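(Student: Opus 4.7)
The plan is to construct the map $\phi$ explicitly via the cascade of a compatible pair, and then to verify both the combinatorial bijection (up to fibers) and the quantum weight identity. The input data is a positive compatible pair $(S_1,S_2)$ on a maximal Dyck path of type $(r\beta, r\alpha)$ associated to the quantum cluster monomial $q^{\alpha\beta}X_n^\alpha X_{n+1}^\beta$, and the target is a broken line on the $r$-Kronecker scattering diagram ending at the corresponding base point.

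First I would recall the relevant expansion formulas in parallel: Rupel's quantum refinement of the Lee--Li--Zelevinsky formula gives the monomial as a weighted sum over compatible pairs, while the Gross--Hacking--Keel--Kontsevich formula gives the same element as a weighted sum over broken lines. Next I would set up the cascade of $(S_1,S_2)$: sweeping the Dyck path from one end, each vertical edge of $S_2$ is grouped with the adjacent block of horizontal edges of $S_1$ with which it ``interacts'' in the sense of the compatibility condition, producing a sequence of integer block sizes $(m_1, m_2, \ldots)$. The map $\phi$ is then defined by reading these sizes off as the successive bend data of a broken line: the $i$th wall-crossing of $\phi(S_1,S_2)$ is determined by $m_i$, and the direction vector on each straight segment is obtained by iterating the quantum $r$-Kronecker wall-crossing automorphism on the initial monomial.

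The positivity hypothesis $r|S_2|\le |S_1|$ should translate directly into the statement that the cascade never ``overflows'' the Dyck path, which geometrically prevents the broken line from reaching the Badlands region of the scattering diagram and thus forces negative angular momentum. Under this hypothesis I expect surjectivity onto the broken lines of negative angular momentum to fall out of the invertibility of the cascade bookkeeping, since the block sizes $(m_i)$ can be recovered from the slope changes along the broken line.

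The main obstacle will be matching the quantum weights on fibers. Rupel's weight is a product of $q$-binomial-type factors attached locally to each interaction of $S_1$ and $S_2$, whereas the broken line weight is a product of coefficients picked up at each wall crossing, prescribed by the quantum dilogarithm-type automorphisms of the $r$-Kronecker scattering diagram. My plan is to partition the fiber $\phi^{-1}(L)$ of each broken line $L$ according to the cascade data and show that summing Rupel's weights over this fiber yields the weight of $L$. I anticipate that this reduces to a local $q$-identity at each bend, and that the delicate point is book-keeping the powers of $q$ coming from the non-commutativity $X_1X_2 = q^2 X_2 X_1$ as one iterates the wall-crossing automorphism; once this local identity is established at a single wall, the global matching should follow by induction on the number of bends.
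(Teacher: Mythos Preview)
Your proposal has several genuine gaps, and the overall mechanism you sketch does not match what actually makes the bijection work.

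First, two concrete errors. The positivity condition is $r|S_2| \le \ell$ (the horizontal length of the Dyck path), not $r|S_2| \le |S_1|$; your version is neither equivalent nor what is needed. Second, your description of the cascade is off: the cascade pairs each vertical edge of $S_2$ with $r$ horizontal edges of the \emph{entire} path $\calP_1$ (not with edges of $S_1$), via a specific left-then-right filling procedure. The relevant datum is then which vertical edges are \emph{overflowing}, i.e., whose $r$-cascade-paired horizontal edge happens to lie in $S_1$. Your ``adjacent block of horizontal edges of $S_1$'' does not capture this.

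More importantly, you are missing the key structural idea. The map $\phi$ is not built from a single sweep of one compatible pair that directly reads off bend multiplicities. Rather, one defines a map $\ttheta$ that sends a compatible pair on $\calP(\ell,h)$ to a compatible pair on the strictly smaller path $\calP(h,rh-\ell)$, recording the complement of $S_2$ as the new $S_1$ and the overflowing edges as the new $S_2$. Iterating $\ttheta$ produces a sequence $(S_1^{(i)},S_2^{(i)})$ with $b_i = |S_2^{(i)}|$ weakly decreasing, and the bend multiplicity at the $i$th cluster wall is $b_{i-1} - (rb_i - b_{i+1})$. The bijection is then proved by showing that both compatible pairs and broken lines of negative angular momentum satisfy the \emph{same} recursion in $n$ (the index of the cluster wall), with identical $q$-binomial transition coefficients; this is how surjectivity and the weight identity are obtained simultaneously.

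For the quantum weights, the argument is not a local $q$-identity at a single bend. One must show that $\ttheta$, restricted to pairs with $S_1 \subseteq \cas(S_2)$, preserves Rupel's weight \emph{exactly}. This is done by first checking the $S_1 = \emptyset$ case and then tracking, via explicit word calculations in Rupel's grading, how the weight changes when a single cascade-paired horizontal edge is added to $S_1$; the change must equal the corresponding change on the image side. Separately, one shows that moving an edge of $S_1$ among the unshadowed positions shifts the weight by $2r$, which produces the $q$-binomial factor $\binom{\ell - rb}{t}_{q^{2r}}$ in the recursion. Your proposal does not contain any of this machinery, and the vague ``local identity'' you anticipate would not account for the global reshuffling that $\ttheta$ performs.
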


By specializing $q$ to $1$, this yields a partial answer to \autoref{prob: comb proof} for skew-symmetric rank-$2$ cluster algebras.

\begin{cor}
For cluster monomials in $\calA(r,r)$ where $r > 2$, there is an explicit (weighted) bijection between positive compatible pairs (which comprise almost all compatible pairs) and broken lines of negative angular momentum.  
\end{cor}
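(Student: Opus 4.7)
The plan is to deduce this corollary as a direct specialization of the preceding theorem at $q = 1$.  Recall that the classical $r$-Kronecker cluster algebra $\calA(r,r)$ is obtained from $\calA_q(r,r)$ by setting $q = 1$, under which each quantum cluster monomial descends to the corresponding classical cluster monomial.  Thus it suffices to take the explicit $q$-weighted bijection supplied by the theorem and check that its weighted bijection property passes through this specialization.

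First, observe that the underlying combinatorial map from positive compatible pairs to broken lines of negative angular momentum is purely combinatorial and does not depend on $q$, so no work is needed on the set-theoretic side.  What requires attention is the behavior of the weights.  The $q$-weight of a compatible pair, built on Rupel's weighting \cite[Corollary 5.4]{Rup2} in the cluster variable case, is a Laurent polynomial in $q$ whose value at $q = 1$ recovers the classical coefficient contributed by that compatible pair in the Lee--Li--Zelevinsky expansion of the corresponding cluster monomial.  Similarly, the $q$-weight of a broken line on the quantum scattering diagram of $\calA_q(r,r)$ specializes at $q = 1$ to the classical coefficient attached to that broken line in the GHKK expansion for $\calA(r,r)$.

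Next, apply the $q$-weighted bijection identity from the theorem, namely that for each broken line $\ell$ of negative angular momentum
\[\sum_{(S_1,S_2) \in \phi^{-1}(\ell)} w_q(S_1,S_2) \;=\; w_q(\ell),\]
and specialize at $q = 1$.  The resulting identity is exactly the classical weighted bijection statement.  The ``almost all'' clause is a purely combinatorial comparison of cardinalities of positive compatible pairs versus all compatible pairs associated to cluster monomials, justified by \autoref{lem: almost all}, and transfers to $\calA(r,r)$ unchanged.

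The main subtlety --- and essentially the only point that requires genuine verification --- is confirming that the quantum weights used in the theorem really do specialize at $q = 1$ to the intended classical coefficients on both sides.  For the cluster variable case this is immediate from Rupel's formula, but for general cluster monomials it requires a short bookkeeping check against the quantum weighting introduced earlier in the paper and the classical broken line contributions recorded in \cite{GHKK}.  Once this specialization compatibility is secured, the corollary follows with no further argument.
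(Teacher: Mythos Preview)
Your proposal is correct and follows the same approach as the paper: the corollary is obtained simply by specializing the preceding quantum theorem at $q=1$, and the paper presents it with no further proof beyond the sentence ``By specializing $q$ to $1$, this yields a partial answer to \autoref{prob: comb proof}.'' Your additional remarks about the specialization of weights and the ``almost all'' clause are accurate elaborations, though the ``main subtlety'' you flag for general cluster monomials is already handled by the theorem itself (\autoref{thm: phi q weighted bijection} is stated for cluster monomials, not just cluster variables), so no extra bookkeeping is required.
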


A major obstruction in extending this bijection to the entire theta basis for $r > 2$ is that the corresponding scattering diagram is not well-understood.  In particular, there are infinitely many non-cluster walls that are dense in the full-dimensional ``Badlands'' region of the scattering diagram (see, for example, \cite[Figure 2]{Rea}) that have not been explicitly described.  As a consequence, there is no known combinatorial description of the broken lines on these scattering diagrams, though some progress has been made \cite{akagi2023explicit,ENS,Rea}.  Conversely, the compatible pairs have a simple combinatorial description, so constructing an explicit bijection would yield the same for the broken lines.

The structure of the paper is as follows.  In \autoref{sec: prelim cp}, we provide preliminaries concerning compatible pairs on maximal Dyck paths and Rupel's quantum grading.  We introduce the \emph{cascade} of a compatible pair in \autoref{sec: shadows and cascades} and relate the cascade to the previously-studied shadow of a compatible pair.  \autoref{sec: prelim bl} contains preliminaries on quantum scattering diagrams and broken lines.  In \autoref{sec: weights of bls}, we calculate the quantum weights of certain broken lines appearing in the theta basis of the quantum $r$-Kronecker cluster algebra.  We then construct a $q$-weighted bijection between positive compatible pairs and broken lines of negative angular momentum corresponding to (quantum) cluster monomials in \autoref{sec: bijection for r-kronecker}.  We extend this bijection to all broken lines appearing in the theta basis of the quantum Kronecker cluster algebra in \autoref{sec: bijection for kronecker}, where we also discuss the relation between several bases of this cluster algebra.

\section{Preliminaries: Compatible Pairs}\label{sec: prelim cp}

\subsection{Maximal Dyck Paths}
Fix $\ell,h \in \Z_{\geq 0}$. Consider a rectangle with vertices $(0,0)$, $(0,h)$, $(\ell,0)$, and $(\ell,h)$ with a main diagonal from $(0,0)$ to $(\ell,h)$.  

\begin{defn}
A \emph{Dyck path} is a lattice path in $\Z^2$ starting at $(0,0)$ and ending at a lattice point $(\ell,h)$ where $\ell,h \geq 0$, proceeding by only unit north and east steps and never passing strictly above the main diagonal. Given a collection $C$ of disjoint subpaths of a Dyck path, we denote the set of east steps by $C_1$, the set of north steps by $C_2$, and the total number of edges by $|C|$.  The \emph{length} of the Dyck path $\calP$ is the quantity $|\calP|$.  We denote the set of lattice points contained in the Dyck path $\calP$, ordered from left to right and including both endpoints, by $V(\calP) = \{w_0,w_1,\dots,w_{|\calP|}\}$.
\end{defn} 

The Dyck paths from $(0,0)$ to $(\ell,h)$ form a partially ordered set by comparing the heights at all vertices.  The \emph{maximal Dyck path} $\calP(\ell,h)$ is the maximal element under this partial order.

\begin{defn}\label{defn: maximal Dyck path}
For nonnegative integers $\ell$ and $h$, the \emph{maximal Dyck path} $\calP(\ell,h)$ is the path proceeding by unit north and east steps from $(0,0)$ to $(\ell,h)$ that is closest to the main diagonal without crossing strictly above it.
\end{defn} 

In the setting of combinatorics on words, maximal Dyck paths are also known as Christoffel words.  The maximal Dyck path $\calP(\ell,h)$ corresponds to the lower Christoffel word of slope $h/\ell$; see \cite{BLRS} for further details on Christoffel words.

Let the horizontal (\resp vertical) edges of $\calP = \calP(\ell,h)$ be labeled by $\eta_i$ for $1 \leq i \leq \ell$ (\resp $\nu_j$ for $1 \leq j \leq h$), with the indices increasing to the east (\resp north).  Given an edge $e$ in $\calP$, let $p_e$ denote the left endpoint of $e$ if $e$ is horizontal or the top endpoint of $e$ if $e$ is vertical.  For distinct edges $e,f$ in $\calP(\ell,h)$, let $\overrightarrow{ef}$ denote the subpath proceeding east from $p_e$ to $p_f$, continuing cyclically around $\calP(\ell,h)$ if $e$ is to the east of $f$.  Similarly, for distinct vertices $w_i,w_j \in V(\calP)$, let $\overrightarrow{w_iw_j}$ denote the subpath proceeding east from $w_i$ to $w_j$, continuing cyclically if needed.

In the framework of Lee--Li--Zelevinsky \cite{LLZ}, the cluster variables correspond to a family of maximal Dyck paths with a similar recursive structure.

\begin{defn}
Let $\{c_n\}_{n=0}^\infty$ be the sequence of integers defined recursively by:
$$c_0 = -1, c_{1} = 0,\; \text{ and } c_{n} = rc_{n-1} - c_{n-2} \text{ for } n > 1\,.$$
\end{defn}

For $n \geq 3$, the maximal Dyck path associated to the cluster variable $X_n$ is $\calC_n \colonequals \calP(c_{n-1},c_{n-2})$.  The Dyck paths corresponding to cluster monomials are those of the form $\calP(\alpha c_{n+1} + \beta c_{n}, \alpha c_n + \beta c_{n-1})$ for integers $n \geq 1$ and $\alpha,\beta \geq 0$.

\subsection{Compatible Pairs}
We now define compatible pairs, certain collections of edges on a Dyck path $\calP$, originally introduced in \cite{LLZ}.  
\begin{defn}
For any pair of vertices $u,w\in \calP(\ell,h)$,  let $|uw|_1$ (\resp $|uw|_2$) denote the number of horizontal (\resp vertical) edges of $\overrightarrow{uw}$.  Given a set of horizontal edges $S_1$ and a set of vertical edges $S_2$ in $\calP(\ell,h)$, the pair $(S_1,S_2)$ is \emph{compatible} if, for every edge $e$ in $S_1$ and every edge $f$ in $S_2$, there exists a lattice point $t \neq p_e,p_f$ in the subpath $\overrightarrow{e f}$ such that 
$$|tp_f|_1 = r|\overrightarrow{tp_f} \cap S_2| \text{ or } |p_e t|_2 = r|\overrightarrow{p_e t} \cap S_1|\,.$$
\end{defn}

The expansion formula for cluster variables given by Lee, Li, and Zelevinsky has monomials corresponding to compatible pairs on $\calC_n$.  Their expansion formula works in the more general setting of elements of the greedy basis, which contains the cluster variables.  For further details on the greedy basis, see \cite{LLZ}.  We present their formula in the special case of classical cluster variables $x_n$, which are a specialization of the quantum cluster variables $X_n$ obtained by setting $q = 1$.

\begin{thm}{\cite[Theorem 1.11]{LLZ}}\label{thm: LLZ expansion}
For each $n \geq 3$, the (classical) cluster variable $x_n$ in $\calA(r,r)$ is given by
$$x_n = x_1^{-c_{n-1}}x_2^{-c_{n-2}} \sum_{(S_1,S_2)}x_1^{r|S_2|}x_2^{r|S_1|}\,,$$
where the sum is over all compatible pairs $(S_1,S_2)$ in $\calC_n$.
\end{thm}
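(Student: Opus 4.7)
The plan is to proceed by induction on $n$, verifying that the right-hand side
$$F_n \colonequals x_1^{-c_{n-1}} x_2^{-c_{n-2}} G_n, \qquad G_n \colonequals \sum_{(S_1,S_2)} x_1^{r|S_2|} x_2^{r|S_1|}$$
(summed over compatible pairs on $\calC_n$) satisfies the same initial data and cluster mutation recursion $F_{n+1} F_{n-1} = F_n^{\,r} + 1$ as the classical cluster variables $x_n$ in $\calA(r,r)$.  The base case $n = 3$ is immediate: $\calC_3 = \calP(1,0)$ has exactly two compatible pairs, $(\emptyset,\emptyset)$ and $(\{\eta_1\},\emptyset)$, giving $F_3 = (1 + x_2^r)/x_1 = x_3$; a direct enumeration on $\calP(r,1)$ handles $n = 4$ and anchors the induction.

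For the inductive step, using $c_n + c_{n-2} = r c_{n-1}$ and $c_{n-1} + c_{n-3} = r c_{n-2}$, the identity $F_{n+1} F_{n-1} = F_n^{\,r} + 1$ is equivalent to the purely combinatorial statement
$$G_{n+1} \cdot G_{n-1} \;=\; G_n^{\,r} \;+\; x_1^{r c_{n-1}} x_2^{r c_{n-2}}.$$
The extra monomial on the right corresponds to a unique distinguished pair on the left: the pair $(S_1, S_2) = (\emptyset, \text{all vertical})$ on $\calC_{n+1}$ together with $(T_1, T_2) = (\text{all horizontal}, \emptyset)$ on $\calC_{n-1}$, each trivially compatible because one of the sets is empty.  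The proof thus reduces to constructing a weight-preserving bijection between pairs of compatible pairs on $(\calC_{n+1}, \calC_{n-1})$, \emph{excluding} this exceptional pair, and ordered $r$-tuples of compatible pairs on $\calC_n$.

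The structural input would be the relation $c_{n+1} = r c_n - c_{n-1}$, which suggests realizing $\calC_{n+1}$ as $r$ overlapping shifted copies of $\calC_n$ meeting pairwise along translated copies of $\calC_{n-1}$; this mirrors the self-similarity of maximal Dyck paths under continued-fraction expansion that is already implicit in their Christoffel-word description.  A compatible pair on $\calC_{n+1}$ would then restrict to compatible pairs on each of the $r$ sub-Dyck paths, while the overlapping sub-paths of type $\calC_{n-1}$, together with the compatible pair on $\calC_{n-1}$ supplied on the other side of the equation, serve as the gluing data needed to invert the restriction uniquely.

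The main obstacle --- and the reason this statement is substantially deeper than the surface bookkeeping suggests --- is that compatibility is a \emph{global} condition on the Dyck path: restrictions that are each individually compatible can fail to glue to a compatible pair on $\calC_{n+1}$, and conversely a compatible pair on $\calC_{n+1}$ may restrict to pairs whose relative placement across distinct sub-paths is precisely what certifies compatibility.  Controlling this requires a careful analysis of how the compatibility witnesses (the intermediate lattice points $t$ in the definition) propagate across the gluing boundaries, with a separate argument isolating the exceptional pair.  An alternative route, and indeed the one taken by Lee--Li--Zelevinsky, is to bypass the explicit bijection: characterize greedy elements by a minimality recurrence on their Laurent coefficients, show that the generating function $G_n$ satisfies this recurrence, and show independently that the cluster variables are greedy of degree $(c_{n-1}, c_{n-2})$.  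The hardest technical step is the same in either approach --- tracking the compatibility condition through the recursion --- but the minimality characterization packages it into a cleaner inductive coefficient comparison.
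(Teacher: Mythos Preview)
This theorem is not proved in the paper: it is quoted from \cite{LLZ} as background and no argument is given.  So there is no ``paper's own proof'' to compare your proposal against.

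As for the proposal itself: it is an outline rather than a proof, and you are candid about this.  The reduction of the cluster recursion to the polynomial identity
\[
G_{n+1}\,G_{n-1} \;=\; G_n^{\,r} + x_1^{rc_{n-1}}x_2^{rc_{n-2}}
\]
is correct, as is the identification of the distinguished monomial with the pair $\bigl((\emptyset,\calP_2),\,(\calP_1,\emptyset)\bigr)$.  However, the core step --- a weight-preserving bijection between compatible pairs on $(\calC_{n+1},\calC_{n-1})$ minus the exceptional pair and $r$-tuples of compatible pairs on $\calC_n$ --- is only sketched.  You correctly flag the obstacle (compatibility is global, so restriction and gluing do not interact cleanly with it), but you do not resolve it; the proposed decomposition of $\calC_{n+1}$ into ``$r$ overlapping copies of $\calC_n$'' is also imprecise and would need to be made exact before one could even attempt the bijection.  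In short, the genuine gap is that the bijection is asserted to exist but never constructed, and the difficulty you name is exactly the content of the theorem.

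Your final paragraph accurately describes how Lee--Li--Zelevinsky actually proceed: they do not build this bijection directly, but instead characterize greedy elements by a recurrence on Laurent coefficients, verify that the compatible-pair generating function satisfies it, and separately argue that cluster variables are greedy.  If you want a complete argument, that is the route to follow; the direct bijective approach you propose, while natural to state, is not known to be easier.
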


Let $\CP(\calP)$ denote the set of all compatible pairs on $\calP$.  Let $\CP(\ell,h,a,b)$ be the set of pairs $(S_1,S_2) \in \CP(\calP(\ell,h))$ such that $|S_1| = a$ and $|S_2| = b$.  In the cluster variable case, $\CP(c_{n-1},c_{n-2},a,b)$ is the set of compatible pairs corresponding to the monomial $x_1^{rb-c_{n-1}}x_2^{ra-c_{n-2}}$ in the Laurent polynomial expansion of $X_n$.  

\begin{defn} We say that a compatible pair in $\CP(\ell,h,a,b)$ is \emph{positive} if $rb \leq \ell$.
\end{defn}

The class of positive compatible pairs arose naturally in Lee and Schiffler's proof of the positivity conjecture for cluster algebras from quivers (see, for example, the first term in \cite[Theorem 3.22]{LSpositive}).  In this paper, we will primarily be focused on positive compatible pairs on maximal Dyck paths corresponding to cluster monomials.  We now show that on these Dyck paths, the positive compatible pairs comprise almost all compatible pairs.

\begin{lem}\label{lem: almost all}
Fix $\alpha,\beta \in \Z_{\geq 0}$.  Let $\calP_n$ be the maximal Dyck path corresponding to the cluster monomial $X_n^\alpha X_{n+1}^\beta$ in $\calA(r,r)$ for $r \geq 3$ and let $\CP_{+}(\calP_n)$ be the set of compatible pairs on $\calP_n$ that are positive.  Then $$\lim_{n \to \infty} \frac{|\CP_{+}(\calP_n)|}{|\CP(\calP_n)|} = 1\,.$$
\end{lem}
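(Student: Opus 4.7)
The plan is to show that non-positive compatible pairs on $\calP_n$ form a doubly-exponentially small fraction of all compatible pairs, leveraging the rapid growth of $|\CP(\calP_n)|$ for $r \geq 3$. First, generalizing \autoref{thm: LLZ expansion} to cluster monomials (which are greedy basis elements) and specializing to $x_1 = x_2 = 1$ gives $|\CP(\calP_n)| = y_n^\alpha\, y_{n+1}^\beta$, where $y_m \colonequals X_m|_{x_1 = x_2 = 1}$ satisfies $y_{m+1}y_{m-1} = y_m^r + 1$. Writing $z_m = \log y_m$ and taking logarithms yields $z_{m+1} - rz_m + z_{m-1} = \log(1 + y_m^{-r}) \to 0$, so $z_m$ is asymptotic to a solution of the linear recursion with characteristic roots $(r \pm \sqrt{r^2-4})/2$; the dominant term gives $z_m \sim A \lambda^m$ with $\lambda = (r + \sqrt{r^2-4})/2 > 2$. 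Hence $|\CP(\calP_n)|$ grows doubly exponentially in $n$.

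Next, I bound the non-positive count. A non-positive pair has $r|S_2| > \ell_n$, equivalently $|S_2^c| < (rh_n - \ell_n)/r$. Using the recursion $c_{m+1} = rc_m - c_{m-1}$, a direct computation gives $rh_n - \ell_n = \alpha c_{n-1} + \beta c_{n-2}$, which coincides with the horizontal length $\ell_{n-2}$ of the Dyck path associated to the smaller cluster monomial $X_{n-2}^\alpha X_{n-1}^\beta$. This numerical coincidence motivates constructing an injection
\[
\iota \colon \CP(\calP_n) \setminus \CP_+(\calP_n) \hookrightarrow \CP(\calP_{n-2}),
\]
exploiting the fact that when $|S_2^c|$ is small, the compatibility condition tightly constrains $S_1$. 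A natural candidate for $\iota$ uses the cascade construction of \autoref{sec: shadows and cascades}: the cascade of a non-positive pair should collapse onto a sub-Dyck-path of dimensions matching $\calP_{n-2}$. With such an injection in hand,
\[
\frac{|\CP(\calP_n) \setminus \CP_+(\calP_n)|}{|\CP(\calP_n)|} \leq \frac{y_{n-2}^\alpha\, y_{n-1}^\beta}{y_n^\alpha\, y_{n+1}^\beta},
\]
whose logarithm is asymptotic to $A(\alpha + \beta\lambda)(\lambda^{n-2} - \lambda^n) \to -\infty$, yielding the claimed limit.

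The main obstacle is constructing $\iota$ rigorously. An algebraic route via the mutation $x_1 \mapsto (x_2^r + 1)/x_3$ is unclean, since substituting positive powers of $x_1$ produces sums of monomials rather than single ones. The combinatorial cascade approach appears more promising but requires careful analysis of how the compatibility condition propagates when $S_2$ is nearly maximal. Failing a clean injection, it would suffice to bound $|\{(S_1, S_2) \in \CP(\calP_n) : |S_2^c| = k\}|$ by a quantity exponential in $k$ and polynomial in $n$, then sum over $k < \ell_{n-2}/r$; since $\ell_{n-2} \sim K\lambda^{n-2}$, this weaker bound still beats the doubly-exponential-with-base-$\lambda^n$ total count, giving the desired asymptotic ratio.
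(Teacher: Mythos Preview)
Your proposal is not a proof but a plan with its central step missing. You explicitly acknowledge that ``the main obstacle is constructing $\iota$ rigorously,'' and neither the injection $\iota\colon \CP(\calP_n)\setminus\CP_+(\calP_n)\hookrightarrow \CP(\calP_{n-2})$ nor the fallback bound on $|\{(S_1,S_2)\in\CP(\calP_n):|S_2^c|=k\}|$ is ever carried out. Worse, the cascade machinery you invoke from \autoref{sec: shadows and cascades} is only defined when $r|S_2|\le |\calP_1|$, i.e.\ precisely for \emph{positive} pairs, so it cannot be applied directly to the non-positive pairs you are trying to inject. The algebraic route via mutation you mention is indeed unclean for the reason you state. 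So as it stands, neither branch of your strategy is executed.

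By contrast, the paper's argument is entirely elementary and avoids injections, recursions, and doubly-exponential asymptotics. It simply lower-bounds $|\CP(\calP)|$ by counting pairs with small $|S_2|$ and $S_1$ disjoint from the shadow of $S_2$ (each such choice of $S_2$ leaves at least $c_{n+1}-rc_{n-2}$ unshadowed horizontal edges, each freely in or out of $S_1$), and upper-bounds $|\CP_{\mathrm{bad}}(\calP)|$ by noting that if $rb>c_{n+1}$ then $b\ge c_n-c_{n-2}$, and that each edge of $S_2$ forbids at least $r-1$ adjacent horizontal edges from $S_1$. Both bounds have the form $2^{\text{something}}\sum_{k=0}^{c_{n-2}}\binom{c_n}{k}$, the binomial sums cancel, and the ratio is at most $2^{-c_{n-2}}\to 0$. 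This is a two-paragraph counting argument; your framework of evaluating cluster variables at $x_1=x_2=1$ and analyzing the recursion $y_{m+1}y_{m-1}=y_m^r+1$ is correct and interesting, but it is machinery in search of the missing combinatorial input.
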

\begin{proof}
We will only explicitly handle the cluster variable case, but the cluster monomial case follows from an analogous argument.  Let $\calP = \calP(c_{n+1},c_{n})$, and let $\CP_{\text{bad}}(\calP) = \CP(\calP) \cut \CP_{+}(\calP)$. By considering, for each integer $j \in \{0,1,\dots,c_{n-2}\}$, the compatible pairs with $|S_2| = b \leq c_{n-2} \leq \frac{1}{r}c_{n+1}$ and $S_1$ disjoint from $\sh(S_2)$ (see \autoref{sec: shadows and cascades} for this construction), we obtain that 
$$|\CP(\calP)| \geq \sum_{b = 0}^{c_{n-2}} \binom{c_n}{b} 2^{c_{n+1} - rb} \geq 2^{c_{n+1} - rc_{n-2}}\sum_{k = 0}^{c_{n-2}} \binom{c_n}{k} \,.$$
If $rb > c_{n+1}$, then $b \geq c_n - c_{n-2}$. Each vertical edge in $S_2$ has at least $(r-1)$ horizontal edges immediately preceding it that cannot be in $S_1$.  Thus, we have
$$|\CP_{\text{bad}}(\calP)| \leq \sum_{b = c_n - c_{n-2}}^{c_n} \binom{c_n}{b} 2^{c_{n+1} - (r-1)b} \leq 2^{c_{n+1} - (r-1)(c_n - c_{n-2})}\sum_{k = 0}^{c_{n-2}} \binom{c_n}{k} \,.$$

Since $c_k \geq (r-1)c_{k-1}$ for each $k$, we have 
$$(r-1)(c_n-c_{n-2}) - rc_{n-2} \geq ((r-1)^3 -2r + 1)c_{n-2} \geq c_{n-2}\,.$$
We can therefore see that
$$ 1- \frac{|\CP_{+}(\calP)|}{|\CP(\calP)|} = \frac{|\CP_{\text{bad}}(\calP)|}{|\CP(\calP)|}\leq 2^{-c_{n-2}}\,,$$
which goes to $0$ as $n$ approaches infinity.
\end{proof}

\subsection{Quantum Weighting on Compatible Pairs}
In \cite[Corollary 5.7]{Rup2}, Rupel gives a quantum weighting to \emph{compatible gradings} \cite[Definition 1.2]{Rup2} corresponding to a generalization of quantum cluster variables.  In the case of quantum cluster variables, the corresponding compatible gradings can be viewed as compatible pairs.  

 The quantum cluster algebra we work with is the principal quantization of the rank-$2$ cluster algebra associated to the $r$-Kronecker quiver, which consists of two vertices with $r$ arrows between them.  While there are several choices for quantizing cluster algebras, we focus on the unique choice that is \emph{bar-invariant}, i.e., invariant under the \emph{bar-involution} $\bar f(q) \colonequals f(q^{-1})$ for $f \in \Z[q^{\pm 1}]$ and $\overline{fX_1^{a_1}X_2^{a_2}} \colonequals \overline{f}X_2^{a_2}X_1^{a_1}$ for $a_1,a_2 \in \Z$.  The sequence of quantum cluster variables is periodic when $r = 1$, and otherwise all $X_n$ are distinct.  Whenever it is clear from context, we shorten ``quantum cluster variable'' to ``cluster variable'' and ``quantum cluster monomial'' to  ``cluster monomial''.

For ease of computation later, we translate each compatible pair into a finite word, following \cite{Bur}, so that we can utilize the language of combinatorics on words.  We work over the alphabet $A = \{h,v,H,V\}$ and let $A^*$ denote the set of finite words on $A$.  Each compatible pair corresponds to a word in  $A^*$ by reading the edges from bottom left to top right.  The letters $h$ and $H$ (\resp $v$ and $V$) represent horizontal (\resp vertical) edges, with the capital letter denoting those edges in $S_1$ (\resp $S_2$).  

We now describe Rupel's construction of a quantum weighting for compatible gradings, though only in the specialization to quantum cluster monomials.  Viewing compatible pairs as words, this weighting takes the form of a morphism $w_q: \Z A^* \to \Z$, where $\Z A^*$ is the group of formal $\Z$-sums of words in $A^*$.

The function $w_q$ is defined on words of length $2$ in $A^*$ by:
$$w_q(hv) = w_q(Hv) = w_q(hV) = 1\,, \;\;\; w_q(Hh) = w_q(vV) = r\,, \;\;\; w_q(VH) = r^2 - 1\,,$$
and for $x,y \in A$, we set $w_q(xy) = - w_q(yx)$.  This last conditions implies that we have ${w_q(hh) = w_q(HH) = w_q(vv) = w_q(VV) = 0}$.  For a word $\sigma = \sigma_1\sigma_2\cdots\sigma_\ell \in A^*$, where each $\sigma_i$ is a letter in $A$, we set
$$w_q(\sigma) \colonequals \sum_{1 \leq i < j \leq \ell} w_q(\sigma_i\sigma_j)\,.$$
We then extend $w_q$ additively to formal $\Z$-sums of any words on $A$.  By considering the word in $A^*$ corresponding to a compatible pair, we also allow $w_q$ to be applied to compatible pairs.

\begin{rem}
    Note that the quantity $w_q(S_1,S_2)$ corresponds to the quantity $\beta_\omega + \gamma_\omega$ in Rupel's work \cite[Corollary 5.7]{Rup2}.
\end{rem}

This quantity $w_q(S_1,S_2)$ is the \emph{quantum weight} of the compatible pair $(S_1,S_2)$, as constructed by Rupel \cite{Rup2}. This allows us to calculate the Laurent expansion of quantum cluster variables as follows.

\begin{thm}[{\cite[Corollary 5.7]{Rup2}}]\label{thm: rupel quantum expansion}
 Consider the quantum cluster algebra $\calA_q(r,r)$ with quantum cluster variables $X_i$ for $i \in \Z$.  For $n \geq 4$, we have
 $$X_n = \sum_{(S_1,S_2)} q^{1 - c_{n-1} - c_{n-2} + w_q(S_1,S_2)}X_1^{-c_{n-1} + r|S_2|}X_2^{-c_{n-2} + r|S_1|}$$
 and 
 $$X_{3-n} = \sum_{(S_1,S_2)} q^{1 - c_{n-1} - c_{n-2} + w_q(S_1,S_2)}X_2^{-c_{n-1} + r|S_2|}X_1^{-c_{n-2} + r|S_1|}\,,$$
 where both sums range over all compatible pairs on $\calC_n$.
\end{thm}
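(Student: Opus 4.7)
My plan is to prove the formula by induction on $n$, using the quantum exchange relation $X_{n+1}X_{n-1} = q^r X_n^r + 1$ together with the commutation rule $X_1 X_2 = q^2 X_2 X_1$. For the base case, I would verify the formula for $n = 4$ by direct enumeration: $\calC_4 = \calP(c_3, c_2) = \calP(r,1)$ has only a handful of compatible pairs, each of which can be classified and assigned its weight $w_q$ by hand, then compared to $X_4 = (q^r X_3^r + 1)X_2^{-1}$ after $X_3$ is expanded via the recursion and $X_1, X_2$ are commuted into the standard monomial order $q^s X_1^a X_2^b$. The companion identity for $X_{3-n}$ then follows from the first by the symmetry of the cluster algebra swapping the initial variables $X_1 \leftrightarrow X_2$, which reverses the roles of horizontal and vertical edges on the Dyck path.

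For the inductive step, assume the formula holds for $X_{n-1}$ and $X_n$. To derive $X_{n+1}$, one expands $X_n^r$ as an $r$-fold product using the inductive formula, multiplies by $X_{n-1}^{-1}$, and collects all terms into the normal form $q^s X_1^a X_2^b$ using the commutation relation. This yields a sum over $(r+1)$-tuples of compatible pairs, and the key step is to set up a weight-preserving bijection between such tuples (after cancellation of the "$+1$" contribution against a distinguished family of terms) and compatible pairs on $\calC_{n+1}$. The recursive structure of maximal Dyck paths suggests the shape of the bijection: the relation $c_n = rc_{n-1} - c_{n-2}$ hints that $\calC_{n+1}$ admits a decomposition into an interleaving of $r$ copies of $\calC_n$ overlapping with one copy of $\calC_{n-1}$, and one hopes that the compatibility condition respects this decomposition.

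The main obstacle is showing that this decomposition is compatible with the weight $w_q$. The definition $w_q(\sigma) = \sum_{i<j} w_q(\sigma_i\sigma_j)$ treats all pairs of letters symmetrically, so the weight of a compatible pair on $\calC_{n+1}$ should split into intra-component contributions (matching the $w_q$ weights on each subpath from the inductive hypothesis) plus cross-component contributions (matching the $q$-powers picked up from reordering $X_1^{a_i}X_2^{b_i}$ factors in the quantum torus). Verifying this split requires careful bookkeeping of the letter-pair weights $w_q(hv), w_q(Hh), w_q(vV), w_q(VH)$; indeed the specific values these are assigned in the definition of $w_q$ should be exactly what is forced to make the induction go through, so matching these is both the heart of the proof and the most delicate combinatorial check.

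An alternative and possibly cleaner approach bypasses the explicit bijection. The classical LLZ expansion (\autoref{thm: LLZ expansion}) already pins down the Laurent monomial support of $X_n$ and the total $q=1$ coefficient of each monomial. Therefore one could invoke a uniqueness argument: once a single "anchor" $q$-weight is specified (for instance the monomial coming from $(\emptyset,\emptyset)$, which is forced by the leading behavior $q^{1-c_{n-1}-c_{n-2}} X_1^{-c_{n-1}}X_2^{-c_{n-2}}$), the remaining $q$-weights are uniquely determined by imposing the quantum exchange relation. Verifying that the predicted weights $q^{1-c_{n-1}-c_{n-2}+w_q(S_1,S_2)}$ satisfy this relation then reduces to a formal algebraic identity, which can be checked by isolating the contribution to each monomial of $X_{n+1}$ and equating coefficients, circumventing the need to construct the combinatorial bijection explicitly.
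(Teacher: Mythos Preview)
The paper does not contain its own proof of this statement. \autoref{thm: rupel quantum expansion} is quoted as a known result from Rupel's work (cited as \cite[Corollary 5.7]{Rup2}); the paper states it, adds the one-line remark about the bar-invariance shift factor $q^{1-c_{n-1}-c_{n-2}}$, and then moves on to use it. There is therefore nothing in the paper to compare your proposal against.

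For what it is worth, your sketch is a plausible outline of a direct combinatorial proof, and it is \emph{not} the route Rupel takes: his argument passes through a more general non-commutative framework (compatible gradings for generalized quantum cluster variables) and specializes, rather than inducting along the exchange relation and building a bijection from $r$-tuples of compatible pairs on $\calC_n$ to compatible pairs on $\calC_{n+1}$. The step in your plan that would carry most of the weight is exactly the one you flag: showing that the cross-component contributions to $w_q$ match the $q$-powers from commuting $X_1$ past $X_2$ in the quantum torus. That is the whole content of the theorem, and it is not obvious that the particular decomposition of $\calC_{n+1}$ you gesture at (an interleaving of $r$ copies of $\calC_n$ overlapping one $\calC_{n-1}$) can be made to interact cleanly with the compatibility condition; the decomposition of maximal Dyck paths that actually behaves well with respect to compatible pairs is more subtle than a naive interleaving. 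Your alternative ``uniqueness'' route is also reasonable in spirit, but note that bar-invariance plus the classical $q=1$ coefficients do not by themselves pin down the individual $q$-weights $w_q(S_1,S_2)$, only the total polynomial coefficient of each Laurent monomial; you would still need an additional argument to recover the per-pair weights.
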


Note that the shift factor of $q^{1 - c_{n-1} - c_{n-2}}$ must be included to have the resulting expression be bar-invariant.

\section{Shadows and Cascades in Compatible 
Pairs}\label{sec: shadows and cascades}
We begin by recalling some results about shadows of edges in compatible pairs, as defined by Lee--Li--Zelevinsky.  We then introduce a new notion called the \emph{cascade} of a compatible pair and establish connections between the two notions.

\subsection{Shadows}
In their study of compatible pairs, Li, Lee, and Zelevinsky \cite{LLZ} introduced the notion of the ``shadow'' of a set of vertical edges.

\begin{defn}\label{def: compatible pair}
For a vertical edge $\nu \in S_2$ with upper endpoint $w$, we define its \emph{local shadow}, denoted $\sh(\nu;S_2)$, to be the set of horizontal edges in the shortest subpath $\overrightarrow{\eta\nu}$ of $\calP$ such that $|\eta\nu|_1 = r|\overrightarrow{\eta\nu} \cap S_2|$.  In this case, we say that the edges $\eta$ and $\nu$ are \emph{shadow-paired} with each other.  If there is no such subpath $\overrightarrow{\eta\nu}$, then we define the local shadow to be $\calP_1$.

 For $V \subseteq S_2$, let $\displaystyle \sh(V;S_2) = \bigcup_{\nu \in V} \sh(\nu;S_2)$, and write $\sh(S_2) := \sh(S_2;S_2)$.  An edge $\eta \in \calP_1$ is called \emph{shadowed} if it is in $\sh(S_2)$, and we say that the edge $\nu \in S_2$ \emph{shadows} each edge in $\sh(\nu;S_2)$.
\end{defn}

\begin{defn}
We say that an edge $\eta \in \calP_1$ is \emph{right-shadowed} if it is in the local shadow $\sh(\nu;S_2)$ of some $\nu \in S_2$ where $\nu$ is to the right of $\eta$.  An edge that is shadowed but not right-shadowed is called \emph{left-shadowed}.  
\end{defn}

\subsection{Cascades}
In this section, we construct the \emph{cascade} of the vertical edges in a compatible pair.  This construction is crucial to the definition of the bijection between broken lines and compatible pairs given in \autoref{sec: bijection for r-kronecker} and \autoref{sec: bijection for kronecker}. We then draw connections between shadows in compatible pairs arising in the study of $\calA_q(r,r)$ and cascades.  

For each positive integer $m$, we associate $m$ horizontal edges to each vertical edge $\nu_j$, where the $i^{\text{th}}$ horizontal edge associated to $\nu_j$ is assigned the label $\nu_j^{(i)}$.  We then consider the set of labels 
$$\mathcal{P}_2^{(m)}=\{\nu_j^{(i)} \, : \, j\in [1,h]\text{ and }i\in [1,m]\}\,.$$

\begin{figure}
\begin{tikzpicture}[scale=.4]
\draw[step=1,color=gray] (0,0) grid (16,6);
\draw[line width=1,color=black] (0,0)--(3,0)--(3,1)--(6,1)--(6,2)--(8,2)--(8,3)--(11,3)--(11,4)--(14,4)--(14,5)--(16,5)--(16,6);
\draw[line width=2pt,color=blue]    (6,1.5) to[out=180,in=90,distance=0.8cm] (3.5,1);
\draw[line width=2pt,color=blue]    (11,3.5) to[out=180,in=90,distance=0.8cm] (8.5,3);
\draw[line width=2pt,color=blue]    (14,4.5) to[out=180,in=90,distance=0.8cm] (11.5,4);
\draw[line width=2pt,color=blue]    (16,5.5) to[out=180,in=90,distance=5cm] (1.5,0);
\draw[line width=2pt,color=blue]    (8,2.5) to[out=180,in=90,distance=3cm] (2.5,0);
\draw[line width=2pt,color=red] (6,1)--(6,2);
\draw[line width=2pt,color=red] (8,2)--(8,3);
\draw[line width=2pt,color=red] (11,3)--(11,4);
\draw[line width=2pt,color=red] (14,4)--(14,5);
\draw[line width=2pt,color=red] (16,5)--(16,6);
\end{tikzpicture}\qquad
\begin{tikzpicture}[scale=.2]
\draw[step=1,color=gray] (0,0) grid (24,23);
\draw[line width=1,color=black] (0,0)--(2,0)--(2,1)--(3,1)--(3,2)--(4,2)--(4,3)--(5,3)--(5,4)--(6,4)--(6,5)--(7,5)--(7,6)--(8,6)--(8,7)--(9,7)--(9,8)--(10,8)--(10,9)--(11,9)--(11,10)--(12,10)--(12,11)--(13,11)--(13,12)--(14,12)--(14,13)--(15,13)--(15,14)--(16,14)--(16,15)--(17,15)--(17,16)--(18,16)--(18,17)--(19,17)--(19,18)--(20,18)--(20,19)--(21,19)--(21,20)--(22,20)--(22,21)--(23,21)--(23,22)--(24,22)--(24,23);
\draw[line width=2pt,color=blue]    (9,7.5) to[out=170,in=100,distance=1.7cm] (9.5,8);
\draw[line width=2pt,color=blue]    (8,6.5) to[out=170,in=100,distance=1.5cm] (6.5,5);
\draw[line width=2pt,color=blue]    (5,3.5) to[out=170,in=100,distance=1.5cm] (3.5,2);
\draw[line width=2pt,color=blue]    (2,0.5) to[out=170,in=100,distance=0.7cm] (0.5,0);
\draw[line width=2pt,color=blue]    (3,1.5) to[out=110,in=160,distance=5.5cm] (11.5,10);
\draw[line width=2pt,color=blue]    (6,4.5) to[out=110,in=160,distance=3.5cm] (10.5,9);
\draw[line width=2pt,color=blue]    (17,15.5) to[out=170,in=100,distance=1.5cm] (15.5,14);
\draw[line width=2pt,color=blue]    (19,17.5) to[out=170,in=100,distance=1.5cm] (17.5,16);
\draw[line width=2pt,color=blue]    (22,20.5) to[out=170,in=100,distance=1.5cm] (20.5,19);
\draw[line width=2pt,color=blue]    (20,18.5) to[out=170,in=100,distance=3cm] (14.5,13);
\draw[line width=2pt,color=blue]    (23,21.5) to[out=170,in=100,distance=5.5cm] (13.5,12);
\draw[line width=2pt,color=blue]    (24,22.5) to[out=170,in=100,distance=7cm] (12.5,11);
\draw[line width=2pt,color=red] (2,0)--(2,1);
\draw[line width=2pt,color=red] (3,1)--(3,2);
\draw[line width=2pt,color=red] (5,3)--(5,4);
\draw[line width=2pt,color=red] (6,4)--(6,5);
\draw[line width=2pt,color=red] (8,6)--(8,7);
\draw[line width=2pt,color=red] (9,7)--(9,8);
\draw[line width=2pt,color=red] (17,15)--(17,16);
\draw[line width=2pt,color=red] (19,17)--(19,18);
\draw[line width=2pt,color=red] (20,18)--(20,19);
\draw[line width=2pt,color=red] (22,20)--(22,21);
\draw[line width=2pt,color=red] (23,21)--(23,22);
\draw[line width=2pt,color=red] (24,22)--(24,23);
\end{tikzpicture}
\caption{In the left image, we let $S_2$ be the set of red vertical edges on $\calP(16,6)$, i.e., $\{\nu_i : 2 \leq i \leq 6\}$. In the right image, we let $S_2$ be the set of red vertical edges on $\calP(24,23)$, i.e., $\{\nu_1,\nu_2,\nu_4,\nu_5,\nu_7,\nu_8,\nu_{16},\nu_{18},\nu_{19},\nu_{21},\nu_{22},\nu_{23}\}$.  In each image, every edge $\nu_i$ in $S_2$ is connected by a blue arc to the horizontal edge that is cascade-paired with $\nu_i$ (where $r = 3$ in the left image and $r = 2$ in the right image).  Note that any horizontal edge that lies below an arc but is not connected to any arc belongs to $\cas_m(S_2)$ for some $1 \leq m < r$. }
\label{fig: cascade example}
\end{figure}
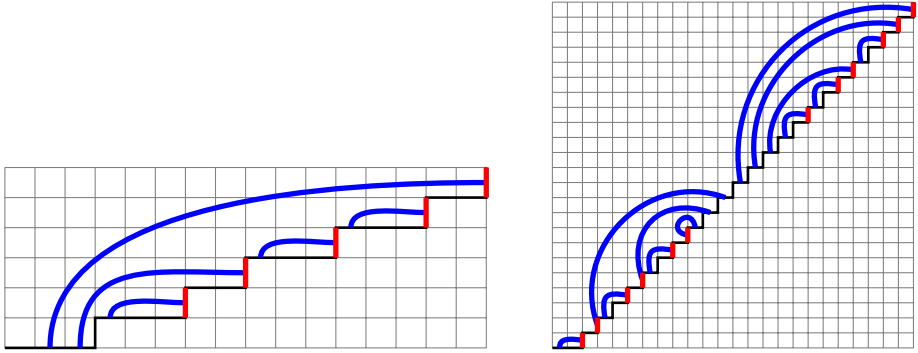

\begin{defn}\label{defn: cascade}\blfootnote{The name \emph{cascade} comes from the following visualization: View the path $\calP$ along with the vertical ray emanating from its leftmost vertex as the walls of a 2-dimensional fountain.  Each edge in $S_2$ has a stream of water emanating from its left side that flows into a horizontal edge in $\calP_2 \cut \left(\bigcup_{m=1}^{r-1} \cas_m(S_2) \right)$. Naturally, streams flow downward when possible. Those streams which cannot flow downward instead ``increase the water level'' of the ``pool'' at the bottom of the cascade by $1$.  }
Given $S_2 \subset \calP_2$ such that $r|S_2| \leq |\calP_1|$, we construct the \emph{cascade  of $S_2$}, denoted by $\cas(S_2)$, as follows.  Let $V$ be the list of edges in $S_2$ ordered from bottom to top.  We will modify a list $L$ of elements from $\calP_2^{(r)}$, with $L$ initially being empty.
\begin{enumerate}[(1)]
\item\label{step: new vertical} Remove the first element, $\nu_j$, from $V$.  Add $r$ labels $\nu^{(1)}_{j}, \nu^{(2)}_{j}, \dots, \nu^{(r)}_{j} \in \calP_2^{(r)}$ to the front of the list $L$. 
\item\label{step: pair left} If there are any unlabeled horizontal edges to the left of $\nu_j$, label the rightmost such edge with the first label in $L$ and remove this element from $L$.  Repeat until there are no such unlabeled horizontal edges or $L$ is empty.  Go back to Step \ref{step: new vertical} unless $V$ is empty.
\item\label{step: pair right} If all horizontal edges to the left of $\nu_j$ are labeled and $L$ is nonempty, label the leftmost unlabeled horizontal edge by the first element of $L$, removing this label from $L$.  Repeat until $L$ is empty or there are no unlabeled horizontal edges.
\end{enumerate}

We then set the \emph{cascade} $\cas(S_2)$ to be the set of edges in $H$ that are paired with some edge in $S_2$ via the above process.  For $m \leq r$, we say that the horizontal edge labeled by $\nu_j^{(m)}$ via the above process is  \emph{$m$-cascade-paired} with $\nu_j \in S_2$, and we let $\cas_m(S_2)$ denote the set of horizontal edges that are $m$-cascade-paired with some edge of $S_2$.  When $m = r$, we omit the prefix $m-$ and simply say that the edge labeled by $\nu_j^{(r)}$ is  \emph{cascade-paired} with $\nu_j \in S_2$.  

Any horizontal edge $m$-cascade-paired, for some choice of $m$, with $\nu_j$ is to its left (\resp right) in $\cas(S_2)$ is called \emph{left-filled} (\resp \emph{right-filled}).  Any horizontal edge that is not paired in $\cas(S_2)$ is called \emph{unfilled}.  For any edge $\nu \in S_2$, the \emph{local cascade of $\nu$}, denoted by $\cas(\nu;S_2)$, is the set of horizontal edges between the leftmost and rightmost horizontal edges paired with $\nu_j$ (including these edges).  
\end{defn}

\begin{rem}
As one may note in \autoref{fig: cascade example}, arcs can be drawn between each $\nu_i \in S_2$ and the horizontal edge cascade-paired to it such that all arcs lie above the path $\calP$ and are non-crossing.  This is in fact true for any set $S_2 \subset \calP_2$ where $r|S_2| \leq |\calP_1|$.  It is straightforward to show this by induction on the size of $S_2$.  
\end{rem}

\begin{prop}\label{prop: r cascade subset}
    Suppose $(r-1)h \leq \ell \leq rh$.  Then for any $(S_1,S_2) \in \CP_{\cas}(\ell,h)$ such that $r|S_2| \leq \ell$, we have $S_1 \subseteq \cas_r(S_2)$.  
\end{prop}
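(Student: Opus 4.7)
The plan is to argue by contrapositive: suppose some $\eta\in S_1$ is not $r$-cascade-paired with any element of $S_2$, so that $\eta\in\cas_m(S_2)$ for some $m<r$ (the case where $\eta$ is unfilled being excluded by the defining condition of $\CP_{\cas}(\ell,h)$). I will derive a contradiction with the compatibility of $(S_1,S_2)$.

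The first step is a structural analysis of the cascade under the hypotheses $(r-1)h\leq\ell\leq rh$ and $r|S_2|\leq\ell$. A counting argument that tracks, at the moment each $\nu_j\in S_2$ is processed, the number of unlabeled horizontal edges lying to its left should show that every cascade label is placed via Step 2; that is, no rightward filling via Step 3 occurs. Consequently, if $\eta=\nu_j^{(m)}$ with $m<r$, then the $r-m$ edges labeled $\nu_j^{(m+1)},\ldots,\nu_j^{(r)}$ lie strictly to the left of $\eta$ in $\calP$, and only the $m-1$ horizontal edges labeled $\nu_j^{(1)},\ldots,\nu_j^{(m-1)}$ (possibly together with labels coming from earlier $\nu_{j'}$ with $j'<j$) lie strictly between $\eta$ and $\nu_j$.

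Next, I apply the compatibility condition to the pair $(e,f)=(\eta,\nu_j)$ along the short forward subpath $\overrightarrow{\eta\nu_j}$. The first alternative $|tp_{\nu_j}|_1=r|\overrightarrow{tp_{\nu_j}}\cap S_2|$ would force $|tp_{\nu_j}|_1=r$, since $\nu_j$ is the unique $S_2$-edge in the tail; but only $m-1<r$ horizontal edges lie strictly between $\eta$ and $\nu_j$, ruling this out for every interior witness $t$. The second alternative $|p_\eta t|_2=r|\overrightarrow{p_\eta t}\cap S_1|$ would demand $r$ times as many $S_1$-edges as vertical edges in the head subpath; yet $\overrightarrow{p_\eta t}$ contains $\eta\in S_1$ while carrying no vertical edges strictly before $\nu_j$, giving a numerical contradiction.

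The main obstacle I anticipate is controlling the interleaving of cascade labels from earlier $\nu_{j'}\in S_2$ (with $j'<j$) within the subpath $\overrightarrow{\eta\nu_j}$: such labels may contribute extra $S_2$-edges to the tail or extra vertical edges to the head, potentially breaking the clean counting above. I plan to resolve this by induction on $j$, processing $S_2$ one edge at a time and maintaining the invariant that every $S_1$-edge already lying in the cascaded region lies in $\cas_r(S_2)$; the correspondence between $\cas(S_2)$ and $\sh(S_2)$ developed earlier in this section should serve as the key technical input for making the induction go through, since known disjointness properties of $S_1$ from $\sh(\nu_{j'};S_2)$ for $j'<j$ will prevent unwanted contributions from previously processed vertical edges.
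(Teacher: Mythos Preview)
Your plan would eventually work, but it is substantially more elaborate than the paper's argument, and the complication you anticipate is illusory. The paper's proof is essentially three lines, resting on a single structural observation you have not isolated: because $(r-1)h\le\ell\le rh$, the maximal Dyck path $\calP(\ell,h)$ has the property that \emph{every} vertical edge is immediately preceded by at least $r-1$ horizontal edges. Those $r-1$ immediate predecessors of $\nu_j$ are always unlabeled at the moment $\nu_j$ is processed (labels from earlier $\nu_{j'}$ went to edges left of $\nu_{j'}$), so they receive exactly the labels $\nu_j^{(1)},\dots,\nu_j^{(r-1)}$. Hence $\bigcup_{m=1}^{r-1}\cas_m(S_2)$ is precisely the set of horizontal edges immediately preceding some element of $S_2$. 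A one-line compatibility check (essentially the one you wrote) then shows none of these can lie in $S_1$, and since $S_1\subseteq\cas(S_2)$ by hypothesis, you are done.

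Two specific comments on your outline. First, your hedge ``possibly together with labels coming from earlier $\nu_{j'}$ with $j'<j$'' is vacuous: once you know $\eta=\nu_j^{(m)}$ with $m<r$ is one of the $r-1$ immediate horizontal predecessors of $\nu_j$, there are no vertical edges at all between $\eta$ and $\nu_j$, so no earlier $\nu_{j'}$ or their labels can appear there. This is exactly what dissolves your ``main obstacle,'' and it makes the proposed induction on $j$ unnecessary. Second, your appeal to the shadow/cascade correspondence from later in the section is overkill and circular in spirit; the proposition is logically prior to that correspondence and should be (and in the paper is) proved directly from the Dyck path structure and the compatibility definition.
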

\begin{proof}
Each vertical edge in $\calP(\ell,h)$ is immediately preceded by at least $r-1$ horizontal edges.  Thus, $\bigcup_{i=1}^{r-1}\cas_{i}(S_2)$ consists of the $r-1$ horizontal edges preceding each edge in $S_2$.  The (at most $r$) horizontal edges immediately preceding an edge in $S_2$ cannot be in $S_1$ by the compatibility restrictions.  Therefore, we have $S_1 \subseteq \cas(S_2) \cut \left(\bigcup_{i=1}^{r-1}\cas_{i}(S_2)\right) = \cas_r(S_2)$.
\end{proof}

\subsection{Relationship between Cascades and Shadows}
The cascade of a set of vertical edges is equal to the size of its shadow.  In this section, we construct a bijection between these two sets of edges that respects compatibility.  This allows us to later adapt some results of Lee--Li--Zelevinsky on shadows to the setting of cascades, which is necessary for the resulting quantum weights to agree with Rupel's quantum weighting.

Let $\CP_{\sh}(\ell,h,a,b)$ be the set of pairs $(S_1,S_2)$ in $\CP(\ell,h,a,b)$ such that $S_1 \subseteq \sh(S_2)$.  Similarly, let $\CP_{\cas}(\ell,h,a,b)$ be the set of pairs $(S_1,S_2)$ in $\CP(\ell,h,a,b)$ such that $S_1 \subseteq \cas(S_2)$.

\begin{defn}\label{defn: lambda}
    Let $k = \max\{m \in [\ell + h] : |w_0 w_m|_1 = r|\vec{w_0 w_m} \cap S_2|\}$. Let $\eta_{i_1},\dots,\eta_{i_\ell}$ be the list of left-filled edges and $\eta_{j_1},\dots,\eta_{j_\ell}$ be the list of left-shadowed edges in $\calP_1$, where both lists are ordered from left to right.  
    
    We define an involution $\iota: \calP_1 \to \calP_1$ that swaps each $\eta_{i_s}$ with $\eta_{j_s}$ and leaves all remaining edges unchanged. 
    
    We then define the map $\lambda: \CP(\ell,h,a,b) \to \CP(\ell,h,a,b)$ taking a compatible pair $(S_1,S_2)$ to 
    $$\lambda(S_1,S_2) \colonequals (\iota(S_1),S_2), \text{ where } \iota(S_1) = \{\iota(\eta) \in \calP_1 : \eta \in S_1\}\,.$$
\end{defn}

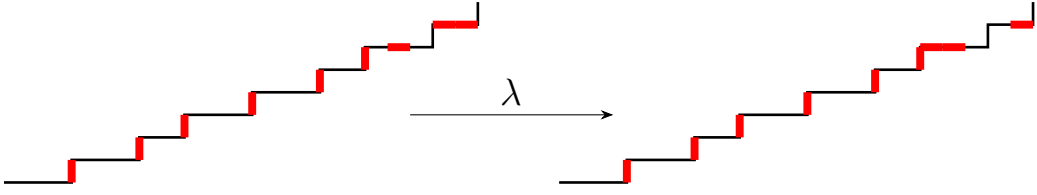
\begin{figure}[h]
\begin{tikzpicture}[scale=.3]
\draw[line width=1,color=black] (0,0)--(3,0)--(3,1)--(6,1)--(6,2)--(8,2)--(8,3)--(11,3)--(11,4)--(14,4)--(14,5)--(16,5)--(16,6)--(19,6)--(19,7)--(21,7)--(21,8);
\draw[line width=3pt,color=red] (3,0)--(3,1);
\draw[line width=3pt,color=red] (6,1)--(6,2);
\draw[line width=3pt,color=red] (8,2)--(8,3);
\draw[line width=3pt,color=red] (11,3)--(11,4);
\draw[line width=3pt,color=red] (14,4)--(14,5);
\draw[line width=3pt,color=red] (16,5)--(16,6);
\draw[line width=3pt,color=red] (17,6)--(18,6);
\draw[line width=3pt,color=red] (19,7)--(20,7);
\draw[line width=3pt,color=red] (20,7)--(21,7);
\draw[line width=1,color=black,xshift=700] (0,0)--(3,0)--(3,1)--(6,1)--(6,2)--(8,2)--(8,3)--(11,3)--(11,4)--(14,4)--(14,5)--(16,5)--(16,6)--(19,6)--(19,7)--(21,7)--(21,8);
\draw[line width=3pt,color=red,xshift=700] (3,0)--(3,1);
\draw[line width=3pt,color=red,xshift=700] (6,1)--(6,2);
\draw[line width=3pt,color=red,xshift=700] (8,2)--(8,3);
\draw[line width=3pt,color=red,xshift=700] (11,3)--(11,4);
\draw[line width=3pt,color=red,xshift=700] (14,4)--(14,5);
\draw[line width=3pt,color=red,xshift=700] (16,5)--(16,6);
\draw[line width=3pt,color=red,xshift=700] (17,6)--(18,6);
\draw[line width=3pt,color=red,xshift=700] (16,6)--(17,6);
\draw[line width=3pt,color=red,xshift=700] (20,7)--(21,7);
\draw (21.5,4) node[anchor=west]  {\large$\lambda$};
\draw[-Stealth] (18,3)--(27,3);
\end{tikzpicture}
\caption{An illustration of the map $\lambda$ applied to the compatible pair $(S_1,S_2)$ in $\calP(21,8)$, where $r = 3$, $S_1 = \{\eta_{18},\eta_{20},\eta_{21}\}$, and $S_2 = \{\nu_1,\nu_2,\nu_3,\nu_4,\nu_5,\nu_6\}$. Following \autoref{defn: lambda}, we have $(i_1,i_2) = (17,18)$ and $(j_1,j_2) = (20,21)$.  We thus have $\lambda(S_1) = \{\eta_{17},\eta_{18},\eta_{21}\}$.  The compatible pair $\lambda(S_1,S_2) = (\lambda(S_1),S_2)$ is depicted on the right.}
\label{fig: lambda exmp}
\end{figure}

\begin{lem}\label{lem: move edge left}
Suppose $(S_1,S_2)$ is a compatible pair.  Let $\eta_i \notin S_1$ and $\eta_j\in S_1$ be such that any edge in $S_2$ that shadows $\eta_i$ also shadows $\eta_j$, $i < j$, and $\eta_i$ and $\eta_j$ are either both left-shadowed or both right-shadowed. Then $((S_1 \cup \{\eta_i\}) \cut \{\eta_j\}, S_2)$ is also a compatible pair.
\end{lem}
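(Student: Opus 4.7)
The plan is to verify that $(S_1', S_2)$, where $S_1' := (S_1 \cup \{\eta_i\}) \setminus \{\eta_j\}$, is compatible by exhibiting, for every pair $(e, f) \in S_1' \times S_2$, a witness lattice point $t \in \overrightarrow{ef}$ with $t \neq p_e, p_f$ satisfying either the Type 1 condition $|tp_f|_1 = r|\overrightarrow{tp_f} \cap S_2|$ (depending only on $S_2$) or the Type 2 condition $|p_e t|_2 = r|\overrightarrow{p_e t} \cap S_1'|$. The argument splits on whether $e \in S_1 \setminus \{\eta_j\}$ (pre-existing) or $e = \eta_i$ (newly added).

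For $e \in S_1 \setminus \{\eta_j\}$, the pair $(e, f)$ was already compatible in $(S_1, S_2)$ with some witness $t$. A Type 1 witness transfers verbatim since $S_2$ is unchanged. For a Type 2 witness, the quantity $|\overrightarrow{p_e t} \cap S_1'| - |\overrightarrow{p_e t} \cap S_1|$ equals $\mathbbm{1}_{[\eta_i \in \overrightarrow{p_e t}]} - \mathbbm{1}_{[\eta_j \in \overrightarrow{p_e t}]} \in \{-1, 0, 1\}$. When this shift is zero the original witness survives; otherwise I would locate an alternative witness along $\overrightarrow{ef}$ by tracking the signed balance $|p_e t|_2 - r|\overrightarrow{p_e t} \cap S_1'|$ as $t$ moves in unit steps, using the original compatibility and the shadow hypothesis to ensure the balance crosses zero within the admissible range.

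For $e = \eta_i$, I leverage the compatibility of $(\eta_j, f)$ in $(S_1, S_2)$ and split by the cyclic position of $f$. When $f$ lies east of $\eta_j$ or, via wrap-around, west of $\eta_i$, the subpath $\overrightarrow{p_{\eta_j} p_f}$ is contained in $\overrightarrow{p_{\eta_i} p_f}$, so the original witness $t$ lies in the new subpath. A Type 1 witness transfers directly. For Type 2, the added edge $\eta_i \in S_1'$ contributes to $\overrightarrow{p_{\eta_i} p_{\eta_j}}$ while the removed edge $\eta_j \in S_1 \setminus S_1'$ contributes to $\overrightarrow{p_{\eta_j} t}$; these $\pm 1$ contributions cancel, giving $|\overrightarrow{p_{\eta_i} t} \cap S_1'| = |\overrightarrow{p_{\eta_i} t} \cap S_1|$. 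The Type 2 equation then reduces to the auxiliary identity $|p_{\eta_i} p_{\eta_j}|_2 = r|\overrightarrow{p_{\eta_i} p_{\eta_j}} \cap S_1|$, which either holds or is repaired by the same balance-walking. When $f$ is strictly between $\eta_i$ and $\eta_j$, I invoke the shadow hypothesis: if $f$ shadowed $\eta_i$ it would right-shadow $\eta_i$ (since $f$ is east of $\eta_i$), forcing $f$ to shadow $\eta_j$ as well; but $\sh(f; S_2)$, being realized by the shortest subpath, lies in the non-wrapping portion of $\calP$ west of $p_f$, excluding $\eta_j$ (east of $f$), a contradiction. In the ``both left-shadowed'' subcase this conclusion is immediate since $\eta_i$ admits no east shadower. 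Hence the westernmost edge $\eta^*$ of $\sh(f; S_2)$ lies strictly east of $\eta_i$, and $t := p_{\eta^*} \in \overrightarrow{p_{\eta_i} p_f}$ is a Type 1 witness for $(\eta_i, f)$.

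The main obstacle is the Type 2 ``balance-walking'' step, which must produce a new witness whenever the naive one breaks under the swap. I plan to handle it via a discrete intermediate-value argument: the signed balance $|p_e t|_2 - r|\overrightarrow{p_e t} \cap S_1'|$ changes by controlled amounts ($\pm 1$ or $\pm r$) at each unit step of $t$, so the original compatibility anchors a known value on one side while the shadow hypothesis confines $\eta_i, \eta_j$ to positions forcing the balance to vanish within the required subpath.
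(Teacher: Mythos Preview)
Your plan is reasonable but leaves the decisive step---the ``balance-walking'' intermediate-value argument---as a promise rather than a proof, and the case analysis is more elaborate than what the paper uses.

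The paper first reduces to the single pair $(\eta_i,\nu)$ for those $\nu\in S_2$ with $\eta_j\in\sh(\nu;S_2)$: if $\nu$ does not shadow $\eta_j$ then by hypothesis it does not shadow $\eta_i$ either, so the left endpoint of $\sh(\nu;S_2)$ furnishes a Type~1 witness regardless of $S_1'$. This one observation absorbs your entire split on the cyclic position of $f$. For the remaining case, instead of transporting the Type~2 witness of $(\eta_j,\nu)$ and then repairing it, the paper lets $\eta_k$ be the \emph{leftmost} edge of $\overrightarrow{\eta_i\eta_j}\cap S_1$. Since $\sh(\nu;S_2)$ is a contiguous block containing both $\eta_i$ and $\eta_j$, it contains $\eta_k$ as well; compatibility of the original pair at $(\eta_k,\nu)$ then gives the endpoint inequality $r\lvert\overrightarrow{\eta_k\nu}\cap S_1\rvert\le\lvert\eta_k\nu\rvert_2$. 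Because $\overrightarrow{\eta_i\eta_k}$ contains no $S_1$-edges while exactly one $S_1'$-edge ($\eta_i$) replaces exactly one $S_1$-edge ($\eta_j$) further along, one has $\lvert\overrightarrow{\eta_i\nu}\cap S_1'\rvert=\lvert\overrightarrow{\eta_k\nu}\cap S_1\rvert$, and the desired inequality for $S_1'$ follows immediately. Your IVT idea is still implicitly needed to convert this endpoint inequality into an honest Type~2 witness, but only once and in its cleanest form; the $\eta_k$ device eliminates the per-case repair you propose.

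On pre-existing edges $e\in S_1\setminus\{\eta_j\}$: the paper does not argue this case separately and appears to take as evident that the only genuinely new constraint is the one involving $\eta_i$. Your caution here is justified, and your balance-tracking approach would close this if carried out---but as written, the proposal states the plan without executing it.
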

\begin{proof}
It is enough to show that the compatibility condition holds for any vertical edge $\nu \in S_2$ such that $\eta_j$ is in $\sh(\nu;S_2)$ (and hence so is $\eta_i$).  Then we have $\overrightarrow{\eta_j \nu} \subseteq \overrightarrow{\eta_i\nu}$. 

Since $\nu$ shadows $\eta_j$, then $|e\nu|_1 < r|\overrightarrow{e \nu} \cap S_2|$ for all $e \in \overrightarrow{\eta_j \nu}$.  By the definition of compatibility, there must be some $e \in \overrightarrow{\eta_j \nu}$ such that $|\eta_j e|_2 = r|\overrightarrow{\eta_j e} \cap S_1|$. 

Let $\eta_k$ be the leftmost horizontal edge $\overrightarrow{\eta_i \eta_j} \cap S_1$.  We then have that $|\eta_i \nu|_2 = |\eta_i \eta_k|_2 + |\eta_k \nu|_2$.  We already have that $r|\vec{\eta_k \nu} \cap S_1| \leq |\eta_k \nu|_2$.  We therefore have 
$$r|\vec{\eta_i \nu }\cap (S_1 \cut \eta_j)| = r|\vec{\eta_k \nu}\cap S_1| \leq |\eta_k \nu|_2 \leq |\eta_i \nu|_2\,,$$
as desired.
\end{proof}

\begin{lem}
The map $\lambda$ preserves compatibility on any compatible pair.
\end{lem}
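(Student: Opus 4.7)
The plan is to show $\lambda$ preserves compatibility by decomposing the involution $\iota$ into a sequence of elementary transpositions and handling each via \autoref{lem: move edge left} or a close variant. Since $\iota$ consists of disjoint transpositions $(\eta_{i_s}, \eta_{j_s})$ with $i_s < j_s$, and $\iota(S_1)$ differs from $S_1$ only on those transpositions for which exactly one of the pair lies in $S_1$, it suffices to treat two cases per nontrivial transposition: the leftward swap (removing $\eta_{j_s} \in S_1$ and inserting $\eta_{i_s} \notin S_1$) and the rightward swap.

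The main preparatory step is a structural correspondence between the cascade and shadow constructions at their respective wraparounds. I would prove that the $s$-th left-filled edge and the $s$-th left-shadowed edge encode analogous overflow behavior: the cascade exhausts its leftward horizontal capacity and then wraps rightward, while the shadow subpath extends leftward past the start of $\calP$ and wraps to the right end. Under the left-to-right orderings defining $\iota$, these two constructions align so that the compatibility data for $\eta_{i_s}$ coming from the cascade matches that of $\eta_{j_s}$ coming from the shadow, and in particular any vertical edge of $S_2$ shadowing $\eta_{i_s}$ also shadows $\eta_{j_s}$.

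Given this correspondence, each leftward swap reduces to \autoref{lem: move edge left}: its shadowing-containment hypothesis holds by the structural alignment, and the requirement that both edges be left-shadowed or both right-shadowed is either satisfied directly (when $\eta_{i_s}$ is itself shadowed) or can be finessed when $\eta_{i_s}$ is unshadowed, by inspecting the proof of \autoref{lem: move edge left} and noting that its key inequality $r|\vec{\eta_k \nu} \cap S_1| \leq |\eta_k \nu|_2$ depends only on the compatibility witness for $\eta_j$. For each rightward swap, I would invoke the involutive property of $\iota$: applying the leftward case to $\lambda(S_1, S_2)$ and using $\lambda \circ \lambda = \mathrm{id}$ yields the rightward direction. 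To ensure intermediate pairs are themselves compatible along the decomposition, the transpositions can be ordered by processing the outermost pairs first, so that each swap's hypotheses depend only on portions of $S_1$ already correctly adjusted.

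The main obstacle is establishing the structural correspondence. The cascade is an iterative filling procedure whereas the shadow is a global counting condition, so matching their wraparound outputs in a way that preserves the shadowing data for each transposition pair requires careful case analysis. Once the alignment is secured, the remaining compatibility arguments follow the same pattern as the proof of \autoref{lem: move edge left}, and the full involution $\lambda$ propagates compatibility through the composite of elementary swaps.
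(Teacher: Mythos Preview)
Your overall plan matches the paper's: decompose $\lambda$ into elementary swaps and invoke \autoref{lem: move edge left}. The paper's proof is terse: it observes that for each $e' \in \sh(S_2) \cap S_1$ one replaces $e'$ by its corresponding edge $e \in \cas(S_2)$, and either $e = e'$, or $e$ is unshadowed (so adding it is harmless and removing $e'$ is harmless), or the hypotheses of \autoref{lem: move edge left} apply. In particular, the paper only ever moves edges \emph{leftward}.

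The genuine gap in your proposal is the treatment of rightward swaps. You propose to handle a rightward swap (removing $\eta_{i_s} \in S_1$ and inserting $\eta_{j_s} \notin S_1$) by ``applying the leftward case to $\lambda(S_1,S_2)$ and using $\lambda \circ \lambda = \mathrm{id}$.'' This is circular: you are trying to prove that $\lambda(S_1,S_2)$ is compatible, so you cannot use it as the starting point of a compatibility-preserving move. Knowing that the \emph{reverse} swap (from $\lambda(S_1,S_2)$ back to $(S_1,S_2)$) preserves compatibility tells you nothing about the compatibility of $\lambda(S_1,S_2)$ itself.

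The resolution is not to patch the involutive argument but to observe that, in the setting where the lemma is actually used, rightward swaps do not occur. The left-filled edges lie outside $\sh(S_2)$: cascade overflow fills the leftmost unlabeled edges, whereas shadow overflow wraps to the rightmost edges of $\calP$, so the two are disjoint. Hence whenever $S_1 \subseteq \sh(S_2)$, no left-filled edge lies in $S_1$, and every nontrivial transposition in $\iota$ moves an edge of $S_1$ from a left-shadowed position $\eta_{j_s}$ to a left-filled position $\eta_{i_s}$ with $i_s < j_s$. Once you see this, your ``structural correspondence'' and the careful ordering of transpositions become largely unnecessary: the shadowing-containment hypothesis of \autoref{lem: move edge left} holds vacuously when $\eta_{i_s}$ is unshadowed, and the remaining case analysis collapses to exactly the three-case split in the paper's proof. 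The reverse direction (the rightward moves you worried about) is the content of the separate \autoref{lem: lambda inverse compatibility}, which uses a different argument and is restricted to cluster-monomial Dyck paths.
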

\begin{proof}
The map $\lambda$ can be obtained by successive iterations of the operation in \autoref{lem: move edge left}, which preserves compatibility.  In particular, we replace each edge $e'$ in $\sh(S_2) \cap S_1$ with its corresponding edge $e$ in $\cas(S_2)$.  Either the edges $e$ and $e'$ coincide, $e$ is not shadowed, or they satisfy the conditions of \autoref{lem: move edge left}.
\end{proof}

\begin{lem}\label{lem: lambda inverse compatibility}
For compatible pairs on Dyck paths corresponding to cluster monomials, $\lambda^{-1}$ preserves compatibility.
\end{lem}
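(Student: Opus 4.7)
Since $\iota$ is an involution on $\calP_1$, the map $\lambda$ is an involution on $\CP(\ell,h,a,b)$ viewed merely as a set map, so $\lambda^{-1}=\lambda$ and $\lambda^{-1}(S_1',S_2)=(\iota(S_1'),S_2)$. The previous lemma proved $\lambda$ preserves compatibility by decomposing it as a sequence of \emph{leftward} swaps using \autoref{lem: move edge left} — starting from $(S_1,S_2)$ and moving each edge of $S_1\cap\sh(S_2)$ to its $\iota$-image in $\cas(S_2)$. To prove the current lemma, we must handle the reverse direction: starting from a compatible pair $(S_1',S_2)$, we need to show that moving each edge of $S_1'\cap\cas(S_2)$ rightward to its image in $\sh(S_2)$ preserves compatibility.

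The plan is to establish a rightward analog of \autoref{lem: move edge left} specific to cluster-monomial Dyck paths $\calP(\alpha c_{n+1}+\beta c_n,\alpha c_n+\beta c_{n-1})$: if $(S_1,S_2)$ is compatible, $\eta_j\in S_1$ is a left-filled edge cascade-paired with some $\nu\in S_2$, and $\eta_k\notin S_1$ is the corresponding left-shadowed edge under $\iota$ (so $j<k$), then $((S_1\cut\{\eta_j\})\cup\{\eta_k\},S_2)$ is compatible. The structural inputs I expect to use are that (i) in a cluster-monomial maximal Dyck path every horizontal run has length $\lfloor\ell/h\rfloor$ or $\lceil\ell/h\rceil$, dictated by the Christoffel-word structure, and (ii) for each $\nu\in S_2$, the local cascade $\cas(\nu;S_2)$ and local shadow $\sh(\nu;S_2)$ are intervals of horizontal edges that occupy positions related by a controlled permutation of runs. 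With this swap lemma in hand, I would order the left-filled edges of $S_1'$ from right to left and swap each with its $\iota$-image in turn, checking at each stage that the intermediate pair remains compatible; the end result is $(\iota(S_1'),S_2)$.

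The main obstacle is the rightward swap lemma itself. The compatibility condition is genuinely asymmetric: in moving $\eta_j\mapsto\eta_k$ rightward, the subpath $\vec{p_{\eta_k}t}$ witnessing the condition $|p_{\eta_k}t|_2=r|\vec{p_{\eta_k}t}\cap S_1|$ for a vertical edge $\nu\in S_2$ to the right of $\eta_k$ is \emph{shorter} than the original $\vec{p_{\eta_j}t}$, so the naive shortening argument used in \autoref{lem: move edge left} runs backwards. The cluster-monomial assumption enters crucially here: the rigid run-length pattern forces the vertical edges of $S_2$ lying in $\vec{\eta_j\eta_k}$ to appear in positions compatible with the cascade/shadow correspondence, and combined with the structural control given by \autoref{prop: r cascade subset} (when applicable) this lets us construct an explicit witness $t'$ for $\eta_k$ from the original witness $t$ for $\eta_j$. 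Outside the cluster-monomial setting the horizontal runs can vary freely enough that a single rightward swap can destroy the required subpath count, so the restriction to cluster-monomial $\calP(\ell,h)$ is essential.
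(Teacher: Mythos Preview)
Your proposal identifies the rightward swap lemma as ``the main obstacle'' and then does not prove it; what remains is a sketch of what you \emph{expect} the Christoffel-word structure to deliver, not an argument that it does. As it stands this is a gap, not a proof.

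More importantly, the paper bypasses the rightward swap problem entirely with a much shorter argument. The key external input is \cite[Remark 2.21]{Rup4}: on a Dyck path corresponding to a cluster monomial, compatibility never requires the cyclic wrap-around, so one only needs to verify the condition for pairs $(\eta,\nu)$ with $\eta$ to the left of $\nu$. Now observe that $\lambda$ (hence $\lambda^{-1}$) only moves edges that are \emph{not right-shadowed}. For any such edge $\eta$ and any $\nu\in S_2$ to its right, the fact that $\eta\notin\sh(\nu;S_2)$ already supplies a witness point $t$ with $|tp_\nu|_1=r|\overrightarrow{tp_\nu}\cap S_2|$; this condition involves only $S_2$, so it holds regardless of whether $\eta$ lies in $S_1$. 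Hence \emph{any} subset of the non-right-shadowed edges can be placed in $S_1$ without destroying compatibility, and in particular $\iota(S_1')$ is compatible whenever $S_1'$ is. No swap-by-swap analysis is needed, and the cluster-monomial hypothesis enters only through the no-wrap-around fact, not through run-length combinatorics.
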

\begin{proof}
For cluster monomial Dyck paths, compatibility only needs to be considered on a single path (without wrapping around cyclically) according to \cite[Remark 2.21]{Rup4}. Since $\lambda$ only involves edges that are not right-shadowed, any subset of these can be included while preserving compatibility.
\end{proof}

\begin{cor}
Let $\calP(\ell,h)$ correspond to a cluster monomial.  Then for any $a,b \in \Z_{\geq 0}$ such that $rb \leq \ell$, we have
$$|\CP_{\sh}(\ell,h,a,b)| = |\CP_{\cas}(\ell,h,a,b)|\,.$$
\end{cor}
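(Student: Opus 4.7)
The plan is to show that the map $\lambda$ from \autoref{defn: lambda} restricts to a bijection $\CP_{\sh}(\ell,h,a,b) \to \CP_{\cas}(\ell,h,a,b)$. By the two preceding lemmas, both $\lambda$ and $\lambda^{-1}$ preserve compatibility on cluster monomial Dyck paths, so $\lambda$ is a bijection on $\CP(\ell,h,a,b)$; this uses that $\iota$ is an involution of $\calP_1$ and does not touch $S_2$, so the cardinalities $|S_1|=a$ and $|S_2|=b$ are preserved.

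The substantive step is to establish the set-theoretic identity
\[
\iota(\sh(S_2)) = \cas(S_2)
\]
for every $S_2 \subseteq \calP_2$ with $r|S_2| \leq \ell$. Granting this, if $(S_1,S_2) \in \CP_{\sh}$ then $\iota(S_1) \subseteq \iota(\sh(S_2)) = \cas(S_2)$, so $\lambda(S_1,S_2) \in \CP_{\cas}$; applying the same observation to $\iota^{-1}=\iota$ gives the reverse inclusion, and $|\CP_{\sh}(\ell,h,a,b)| = |\CP_{\cas}(\ell,h,a,b)|$ follows.

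To prove the identity, I would decompose $\sh(S_2)$ as the disjoint union of its left-shadowed and right-shadowed edges and $\cas(S_2)$ as the disjoint union of its left-filled and right-filled edges. By the construction of $\iota$, the left-filled edges are swapped bijectively with the left-shadowed edges, while all other edges of $\calP_1$ are fixed; hence the identity reduces to showing that the right-shadowed edges coincide, as a subset of $\calP_1$, with the right-filled edges. This is the main obstacle: the right-shadowed (\resp right-filled) edges are precisely the ``overflow'' edges forced to sit to one specified side of their paired vertical edge by the shadow (\resp cascade) construction, and it is not immediately obvious that the two overflow mechanisms select the same horizontal edges. I would verify this by induction on $|S_2|$, peeling off the bottom vertical edge of $S_2$ and comparing the incremental updates to $\sh$ and $\cas$, exploiting the hypothesis $r|S_2| \leq \ell$ together with the recursive structure of cluster monomial Dyck paths $\calP(\alpha c_{n+1}+\beta c_n, \alpha c_n + \beta c_{n-1})$ to argue that a horizontal edge is pushed to the right by the shadow construction exactly when it is pushed to the right by the cascade construction. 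Once this coincidence is established, equality of cardinalities $|\sh(S_2)|=|\cas(S_2)|=r|S_2|$ upgrades the inclusion to an equality and the corollary follows.
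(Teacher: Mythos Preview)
Your overall strategy---showing that $\lambda$ restricts to a bijection $\CP_{\sh}(\ell,h,a,b)\to\CP_{\cas}(\ell,h,a,b)$ via the set identity $\iota(\sh(S_2))=\cas(S_2)$---is exactly the intended argument, and the paper states the corollary without proof for this reason. However, your reduction to ``right-shadowed $=$ right-filled'' is false. In the paper's own \autoref{fig: lambda exmp} (with $\calP(21,8)$, $r=3$, $S_2=\{\nu_1,\dots,\nu_6\}$) the right-shadowed edges are $\{\eta_1,\dots,\eta_{16}\}$ while the right-filled edges are $\{\eta_{17},\eta_{18}\}$; these are disjoint, not equal.

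The error has two sources. First, \autoref{defn: lambda} contains an apparent typo: comparing with the caption of \autoref{fig: lambda exmp} (where $(i_1,i_2)=(17,18)$), the edges $\eta_{i_s}$ must be the \emph{right}-filled ones, not the left-filled ones---otherwise the two lists have different lengths and $\iota$ is ill-defined. Second, your description of right-shadowed edges as ``overflow'' is inverted: in the shadow construction the overflow edges are the \emph{left}-shadowed ones (those whose shadowing $\nu$ lies to their left, forcing the subpath $\overrightarrow{\eta\nu}$ to wrap cyclically), whereas right-shadowed edges sit to the left of their shadowing vertical edge in the ordinary linear order. The correct picture is that right-shadowed $=$ left-filled as subsets of $\calP_1$ (both are the horizontal edges matchable to $S_2$-edges on their right without wrapping, a ballot-type statement), and $\iota$ swaps the two overflow sets, left-shadowed and right-filled. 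Your proposed induction on $|S_2|$ can establish right-shadowed $=$ left-filled once aimed at the right target, after which $\iota(\sh(S_2))=\cas(S_2)$ and the corollary follow.
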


\begin{lem}\label{lem: lambda shadow exchange}
Let $(\lambda(S_1),S_2) = \lambda(S_1,S_2)$.  Then a vertical edge $\nu \in S_2$ is shadowing an edge in $S_1$ if and only $\nu$ is filling an edge in $\lambda(S_1)$. 
\end{lem}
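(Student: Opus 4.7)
The plan is to exploit the order-preserving definition of $\iota$---it matches the $s$-th left-shadowed edge with the $s$-th left-filled edge, both enumerated left to right---and show that this matching transfers the vertical-edge assignments from the shadow structure to those of the cascade structure. First, for $\iota$ to be well-defined I need these two lists to have the same length. Both quantities coincide with an ``overflow'' count $r|S_2|-k$ with $k$ as in \autoref{defn: lambda}; an inductive bookkeeping argument tracking the deficit $r \cdot |\overrightarrow{w_0 w_m} \cap S_2| - |w_0 w_m|_1$ as $m$ ranges over $[\ell+h]$ shows that left-shadowed edges appear precisely when this deficit strictly exceeds zero, and left-filled edges are created in parallel by Step \ref{step: pair right} of the cascade.

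The key combinatorial correspondence is that if $\eta_{j_1}, \dots, \eta_{j_t}$ and $\eta_{i_1}, \dots, \eta_{i_t}$ denote the left-shadowed and left-filled edges respectively (each listed left to right), then there exist $\nu_{(1)}, \dots, \nu_{(t)} \in S_2$ with $\nu_{(s)}$ both shadowing $\eta_{j_s}$ and cascade-paired with $\eta_{i_s}$ for each $s$. The cascade's list $L$ at the start of Step \ref{step: pair right} records precisely these overflow vertical edges in the order they will be consumed left-to-right, and the same ordering arises from the wrap-around shadow: the $s$-th left-shadowed edge is the rightmost target of $\nu_{(s)}$'s wrap-around shadow. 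Tracing both algorithms in parallel yields the correspondence.

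Given these preparations, the lemma follows by case-splitting on $\eta \in S_1$. If $\eta = \eta_{j_s}$ is left-shadowed, then $\iota(\eta) = \eta_{i_s}$ lies in $\lambda(S_1)$ and is cascade-paired with $\nu_{(s)}$, which also shadows $\eta$ by the correspondence. If $\eta$ is not left-shadowed, then $\eta = \iota(\eta) \in \lambda(S_1)$, and I observe that Step \ref{step: pair left} of the cascade pairs $\eta$ with a vertical edge that also shadows $\eta$---since the rightmost available horizontal edge to the left of $\nu_j$ lies inside $\sh(\nu_j; S_2)$. The reverse direction follows from the fact that $\iota$ is an involution. The main obstacle will be verifying the structural claim in the second step: the parallel tracing of the cascade's overflow stack $L$ and the shadow's overflow order requires careful inductive bookkeeping, and the compatibility of $(S_1, S_2)$ is essential to ensure that right-shadowed edges in $S_1$ align with their cascade-pairings.
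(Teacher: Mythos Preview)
The paper states this lemma without proof; it is followed directly by a remark, with no argument in between. So there is nothing in the paper to compare your proposal against.

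Your three-step outline is the natural shape for such an argument, and the idea of tracking the ``overflow'' in parallel between the shadow and cascade constructions is sound. But the sketch leaves the load-bearing steps as assertions. The deficit $r|\overrightarrow{w_0w_m}\cap S_2| - |w_0w_m|_1$ being positive at some $m$ does not by itself pin down which horizontal edges are left-shadowed, since local shadows nest; a sharper characterisation is needed. The correspondence via $\nu_{(s)}$ is the heart of the matter and you do not argue it: the cascade's list $L$ at Step~(3) is a stack, whereas the wrap-around order of left-shadowed edges is governed by how far each $\sh(\nu;S_2)$ extends cyclically, and matching these two orderings requires real work. Finally, your case analysis shows at most that for each $\eta\in S_1$ there is \emph{some} $\nu$ that both shadows $\eta$ and is cascade-paired with $\iota(\eta)$; this does not obviously give the biconditional for a fixed $\nu$. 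Note that the lemma is invoked in \autoref{lem: theta composition} to conclude that the \emph{overshadowing} vertical edges (those whose shadow-paired edge lies in $S_1$) coincide with the \emph{overflowing} ones (those whose cascade-paired edge lies in $\lambda(S_1)$); you should make sure your argument addresses that edge-specific pairing rather than the weaker existential version. The appeal to $\iota$ being an involution for the converse direction is likewise underspecified.
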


\begin{rem}
The definition of shadow and remote shadow in \cite{LLZ} does not seem to be compatible with Rupel's quantum grading.  Thus, we introduce the cascade framework in order for the resulting partition of $S_2$ to yield the correct set of quantum weights via Rupel's assignment.  However, we could alternatively have used the original shadow definitions from \cite{LLZ} and instead introduced a new quantum grading.
\end{rem}

\section{Preliminaries: Scattering Diagrams and Broken Lines}\label{sec: prelim bl}
While we provide the necessary details to construct rank-$2$ cluster scattering diagrams, we refer to \cite[Section 2]{davison2021strong} for full details on quantum scattering diagrams. We then define broken lines on these scattering diagrams, including how to assign quantum weights to broken lines crossing over cluster walls.  

\subsection{Scattering Diagrams}\label{subsec: broken lines}
 Fix a rank-$2$ lattice $M \cong \Z^2$. Let $M_\R = M \otimes \R$, and for a strictly convex rational cone $\sigma \subsetneq M_\R$, let $P = P_\sigma = \sigma \cap M$.  We set $\widehat{\Z[P]}$ to be the completion of the monoid ring $\Z[P]$ at the maximal monomial ideal $\frakm$ generated by $\{x^m : m \in P \cut \{0\}\}$.
\begin{defn}
A \emph{wall} is a pair $(\frakd, f_\frakd)$ consisting of a support $\frakd \subseteq M_\R$ and an associated \emph{wall-crossing function} $f_\frakd \in \widehat{\Z[P_{\frakd}]}$, where
\begin{itemize}
    \item the support $\frakd$ is either a ray $\R_{\leq 0}w$ or a line $\R w$ for some $w \in \sigma \cap (M \cut \{0\})$;
    \item we have $\displaystyle f_\frakd = f_\frakd(x^w) = 1 + \sum_{k \geq 1} c_k x^{kw}$ for some $c_k \in \Z$.
\end{itemize}
\end{defn}

\begin{defn}
A \emph{scattering diagram} $\frakD$ is a collection of walls such that
$$\{(\frakd, f_\frakd) \in \frakD : f_\frakd \not \equiv 1 \mod \frakm^k\}$$
is finite for each $k \geq 0$.  The union of the supports of the walls is the \emph{support} $\Supp(\frakD)$ of the scattering diagram $\frakD$.  
\end{defn}

We associate a scattering diagram $\frakD_r$ to the cluster algebra $\calA(r,r)$.  Let the ``initial'' scattering diagram for $\calA(r,r)$ be 
$$\frakD_{r,\init} = ((\R(1,0), 1 + x_1^r), (\R(0,1), 1 + x_2^r))\,.$$
The scattering diagram $\frakD_r$ is then the \emph{consistent} scattering diagram formed by adding rays to $\frakD_{r,\init}$ (see, for example, \cite[Section 3]{CGM} for details on consistency).  It is shown in \cite[Theorem 1.7]{GHKK} that such a scattering diagram exists and is unique up to equivalence.  The scattering diagram $\frakD_r$ contains \emph{cluster walls} of the form 
$$(\R_{\leq 0}(c_m,c_{m-1}), 1 + x_1^{rc_m}x_2^{rc_{m-1}}) \text{ and } (\R_{\leq 0}(c_{m-1},c_{m}), 1 + x_1^{rc_{m-1}}x_2^{rc_{m}})$$
for all $m \geq 2$, and these walls are distinct for $r \geq 2$.  

For $r \geq 2$, there are walls in addition to the cluster walls that lie in the closed cone spanned by $(2r,-r^2 \pm r\sqrt{r^2 - 4})$.  This cone is known as the ``Badlands'' and contains infinitely many rays for $r \geq 3$.  It was recently shown by Davison and Mandel \cite{davison2021strong} that $\frakD_r$ has a wall at every rational slope within the Badlands. However, the functions associated to these walls are generally not understood, though some partial progress has been made \cite{akagi2023explicit, ENS, Rea}.

\begin{rem}
While scattering diagrams are usually parameterized in terms of ${\bf g}$-vectors, the setting of ${\bf d}$-vectors is better suited for drawing connections to compatible pairs.  Thus, our definition of the scattering diagram $\frakD_r$ is equivalent to the ${\bf d}$-vector scattering diagram $\frakD_{(r,r)}^{\bf d}$ in \cite{CGM}, and our theta functions are parameterized by ${\bf d}$-vectors as well.  The two settings differ by a piece-wise linear bijection on the scattering diagram \cite[Theorem 4.3]{CGM}.
\end{rem}

\subsection{The Kronecker Scattering Diagram}
We now describe a more explicit construction of the scattering diagram for the Kronecker cluster algebra. 

In this case, we have
$$\frakD_{2,\init } = \left\{ \left(\R(-1,0), 1 + x_1^2\right), \left(\R(0,1),1 + x_2^2\right)\right\}\,.$$
The remaining walls of slope $\neq 1$ can be obtained via an iterative process, described in \cite[Section 3]{CGM}.  This process yields the walls 
$$\frakD'_{2} = \bigcup_{\ell \geq 1}\left\{ \left(\R_{\leq 0}(\ell + 1, \ell), 1 + x_1^{2(\ell+1)} x_2^{2\ell}\right), \left(\R_{\leq 0}(\ell,\ell + 1),1 + x_1^{2\ell} x_2^{2(\ell+1)}\right)\right\}\,.$$

Lastly, there is one additional wall that is not described by this process, given by 
$$\frakd_{1} = \left(\R_{\leq 0}(1,1),\; \sum_{j = 0}^\infty (j+1)x_1^{2j}x_2^{2j}\right)\,.$$  Its support is a ray of slope $1$ eminating from the origin, and its wall-crossing function was originally computed in the classical case by Reineke \cite[Section 6]{Rei08} using representation theory.

The scattering diagram associated to the cluster algebra $\calA(2,2)$ is then given by 
$$\frakD_2= \frakD_{2,\init} \cup \frakD'_{2}  \cup \{\frakd_{1}\}\,.$$

We refer to the walls in $\calA(2,2)$ by their slope, i.e., 
\begin{align*}
    \frakd_{\ell/(\ell + 1)} &= \left(\R_{\leq 0}(\ell + 1, \ell), 1 + x_1^{2(\ell+1)} x_2^{2\ell}\right)\,,\\
    \frakd_{(\ell+1)/\ell} &= \left(\R_{\leq 0}(\ell,\ell + 1),1 + x_1^{2\ell} x_2^{2(\ell+1)}\right)\,,\\
    \frakd_0 &= \left(\R(1,0), 1 + x_1^2\right)\,, \text{ and }\\
    \frakd_\infty &= \left(\R(0,1),1 + x_2^2\right)\,.
\end{align*}

\begin{figure}
\begin{tikzpicture}[scale=.4]

\draw[line width=0.1pt] (-10,0)--(10,0);
\draw[line width=0.1pt] (0,-10)--(0,10);
\draw[line width=2pt,color=red] (0,0)--({atan(1)}:{-10 });
\foreach \x in {2,...,40}
    {
    \draw[line width=0.1pt] (0,0)--({atan(\x/(\x-1))}:{-10 });
    \draw[line width=0.1pt] (0,0)--({atan((\x-1)/\x)}:{-10 });
    }
\draw[line width=.1pt,color=red] (0,0)--({atan(1)}:{-10 });

\end{tikzpicture}
\caption{The walls of the scattering diagram $\frakD_2$ are depicted above.  The only non-cluster wall is $\frakd_1$, the wall of slope $1$, shown in red. }
\label{fig: D2}
\end{figure}
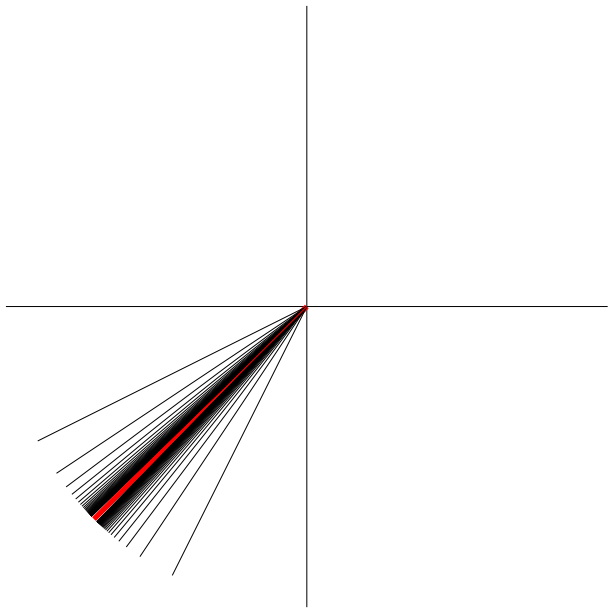

\subsection{Broken Lines and the Theta Basis}
We begin by introducing broken lines for classical scattering diagrams.  Then, focusing specifically on broken lines that only cross over cluster walls, we see how to assign a quantum weight to each wall crossing in a quantum cluster scattering diagram.  We additionally describe the quantum weighting associated with crossing the unique non-cluster wall in the quantum Kronecker scattering diagram.

\begin{defn}
    Let $\frakD$ be a consistent scattering diagram in $M_\R$.  Fix an initial exponent $v_0 \in M$ and terminal point $Q \in M_\R \cut \Supp(\frakD)$. A \emph{broken line} $\gamma$ on $\frakD$ with ends $(Q,m)$ is the data of a continuous map $\gamma: (-\infty,0] \to M_\R \cut \{0\}$, values $-\infty = \tau_0 <  \tau_1 \cdots < \tau_\ell = 0$, and for each $i = 1,\dots,\ell$, and associated monomial $c_iz^{v_i} \in \Z[M]$ such that
    \begin{enumerate}[(i)]
        \item $\gamma(0) = Q$ and $c_0 = 1$;
        \item $\gamma'(\tau) = -v_0$ for all $\tau < \tau_0$, and for each $i = 1,\dots,\ell$, we have $\gamma'(\tau) = -v_i$ for all $\tau \in (\tau_{i-1},\tau_i)$;
        \item for each $i = 1,\dots,\ell - 1$, the point $\gamma(\tau_i)$ is where $\gamma$ transversally crosses a wall $\frakd(i)$ of $\frakD$;
        \item $c_{i+1}z^{v_{i+1}}$ is a monomial term of 
        $$c_iz^{v_i} f^{v_i \cdot n}_{\frakd(i)}$$
        where $n$ is a primitive normal vector of $\frakd(i)$ such that $v_i \cdot n > 0$.
    \end{enumerate}
\end{defn}

The element $v_i \in \Z^2$ is called the \emph{exponent} of the domain of linearity $\gamma(\tau_i,\tau_{i+1})$.  We refer to each $\gamma(\tau_i)$ for $i = 1,\dots,\ell - 1$ as a \emph{bending of multiplicity $m_i$} of $\gamma$ at the wall $\frakd(i)$, where $v_{i+1}$ is the $(m_i+1)$-th term of $c_iz^{v_i} f^{v_i \cdot n}_{\frakd(i)}$ (ordered increasingly by exponent). 

The combinatorics of Gaussian binomial coefficients is necessary for discussing the quantum weight of broken lines.  For $k \in \Z_{\geq 0}$, we define
$$[k]_q  \colonequals \frac{q^k - q^{-k}}{q - q^{-1}} = q^{-(k-1)} + q^{-(k-3)} + \cdots + q^{k-3} + q^{k-1} \in \Z[q^{\pm 1}]\,.$$
This can be viewed as a quantum deformation of the integer $k$, since $\lim_{q \to 1} [k]_q = k$.  We then define $[k]_q! \colonequals [k]_q[k-1]_q\cdots[2]_q[1]_q$ for $k \geq 1$ and set $[0]_q! \colonequals 1$.  For integers $\ell \geq k \geq 0$, the \emph{bar-invariant quantum binomial coefficient} is the quantity
$$\binom{\ell}{k}_q \colonequals \frac{[\ell]_q!}{[k]_q! [\ell - k]_q!} \in \Z_{\geq 0}[q^{\pm 1}]\,.$$

\begin{defn}
Let $\gamma$ be a broken line on the quantum cluster scattering diagram $\frakD_r$ that bends only over cluster walls.  To each domain of linearity $\gamma(\tau_i,\tau_{i+1})$, we associate a \emph{quantum weight} $c_{i,q}$ determined by
$$c_{0,q} = 1 \text{ and } \displaystyle c_{i+1,q} = c_{i,q}\binom{v_i \cdot n}{m_i}_{q^{2r}}\,,$$
where $m_i$ is the multiplicity of the bending at $\gamma(\tau_{i+1})$.  The \emph{quantum weight} of $\gamma$ is the quantum weight of the last domain of linearity. 

If $\gamma$ is on the scattering diagram $\frakD_2$, we additionally allow bending on the non-cluster wall.  The quantum weight after this bending is given by $c_{i+1,q} = c_{i,q}f[m_i]$, where $f[m_i]$ is the coefficient of $x^{m_i}$ in 
$$f(x) = \left(\sum_{k=0}^\infty [i+1]_{q^4} x^k\right)^{v_i \cdot n}\,.$$
\end{defn}

For such broken lines, we associate a \emph{quantum monomial} $\Mono_q(\gamma) = c_{\ell,q}z^{v_\ell}$ to the last domain of linearity.

\begin{exmp}\label{exmp: bl q weight}
    The broken line $\gamma$ in \autoref{fig: broken line q weight} has quantum weight 
    \begin{align*}
        w_q(\gamma) &= \binom{(-12,-11)\cdot(2,-3)}{1}_{q^4}\binom{(-6,-7)\cdot(1,-2)}{1}_{q^4}\binom{(-2,-5)\cdot(-1,0)}{1}_{q^4}\\
    &= \binom{9}{1}_{q^4} \binom{8}{1}_{q^4}  \binom{2}{2}_{q^4}\\
        &= q^{-60} + 2 q^{-52} + 3 q^{-44} + 4 q^{-36} + 5 q^{-28} + 6 q^{-20} + 7 q^{-12} + 8 q^{-4} +  8 q^{4} + 7 q^{12}\\
    &\;\;\;\;\;\;\, + 6 q^{20} + 5 q^{28} + 4 q^{36} + 3 q^{44} + 2 q^{52} + q^{60}
    \end{align*}
\end{exmp}

\begin{rem}
We must renormalize the setup in \cite{davison2021strong} in order to have the quantum cluster variables coincide with those in the setting of \cite{Rup2}.  This renormalization is achieved by replacing the quantum coefficient by its $2r$-th power, i.e., applying the map $q \mapsto q^{2r}$.  
\end{rem}

\begin{rem}
While we have not explicitly defined the quantum cluster scattering diagrams, the quantum weight of a broken line in our setting matches the weight of the same broken line on the associated quantum cluster scattering diagram.  To handle bending at the non-cluster wall in $\frakD_2$, one can adapt the methods used by Reading \cite[Theorem 3.4]{Rea} to the quantum case. 
\end{rem}

\begin{figure}
\begin{tikzpicture}[scale=9]
\def\v{0.15cm}
\draw[line width=1pt] (-.4,0)--(.4,0);
\draw[line width=1pt] (0,-.4)--(0,.4);
\draw[line width=3pt,color=gray] (0,0)--({atan(1)}:{-.4 });
\foreach \x in {2,...,3}
    {
    \draw[line width=1pt] (0,0)--({atan((\x-1)/\x)}:{-.4 });
    }
\foreach \x in {4,...,30}
    {
    \draw[line width=0.2pt,color=gray,draw opacity=0.4] (0,0)--({atan((\x-1)/\x)}:{-.4 });
    }
\foreach \x in {2,...,30}
    {
    \draw[line width=0.1pt,color=gray,draw opacity=0.4] (0,0)--({atan(\x/(\x-1))}:{-.4 });
    }
\draw[line width=1pt, color=blue] (0.25,0.375) node[circle,fill=black,minimum size=\v,inner sep=0pt]{}--(0,0.25) node[circle,fill=blue,minimum size=\v,inner sep=0pt]{}--(-1/8,-1/16) node[circle,fill=blue,minimum size=\v,inner sep=0pt]{}--(-1/6,-1/9) node[circle,fill=blue,minimum size=\v,inner sep=0pt]{}--(-.3147,-0.2468);

\draw (-.54,-.25) node[anchor=west,color=blue]  {\small$(-12,-11)$};
\draw (-.37,-.07) node[anchor=west,color=blue]  {\small$(-6,-7)$};
\draw (-.25,0.12) node[anchor=west,color=blue]  {\small$(-2,-5)$};
\draw (0,0.37) node[anchor=west,color=blue]  {\small$(-2,-1)$};
\end{tikzpicture}
\caption{A broken line $\gamma$ with terminal point $(0.25,\,0.375)$ is depicted on the scattering diagram $\frakD_2$ (shown in \autoref{fig: D2}).   The exponent of each domain of linearity of $\gamma$ is shown as an element of $\Z^2$.  The quantum weight of $\gamma$ is computed in \autoref{exmp: bl q weight}. }
\label{fig: broken line q weight}
\end{figure}
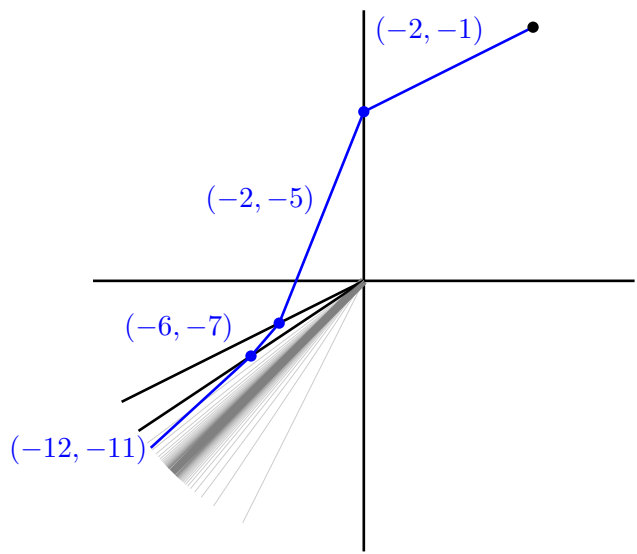

When all walls of $\frakD$ have apex at the origin, as is the case for scattering diagrams associated to cluster algebras, then we say $Q \in M_\R$ is \emph{generic} if $Q$ is not in $\bigcup_{v \in M^*} v^{\perp}$. 

\begin{defn}
Fix a scattering diagram $\frakD$, $m \in M \cut \{0\}$, and $Q \in M_\R \cut \Supp(\frakD)$.  The \emph{theta function} $\vartheta_{Q,m}$ is defined by
$$\vartheta_{Q,m} = \sum_{\gamma} \Mono(\gamma)$$
where the sum is taken over all broken lines that have endpoint $Q$ and initial exponent $m$.

Fixing a generic $Q \in \R^2_{\geq 0}$, the \emph{theta basis} is the set of all theta functions $\vartheta_{Q,m}$ for any $m \in \Z^2$.
\end{defn} 

Additionally, on the scattering diagram $\frakD_2$, we can define the \emph{quantum theta functions} and \emph{quantum theta basis} by replacing $\Mono(\gamma)$ with $\Mono_q(\gamma)$ in the above definition.  Quantum theta bases were defined for arbitrary cluster algebras in recent work of Davison and Mandel \cite{davison2021strong} in order to prove the strong positivity conjecture for quantum cluster algebras.


\begin{figure}
\begin{tikzpicture}[scale=9]
\def\v{0.15cm}
\draw[line width=0.1pt] (-.75,0)--(.4,0);
\draw[line width=0.1pt] (0,-.75)--(0,.4);
\draw[line width=0.1pt] (0,0)--({atan(1)}:{-.8 });
\foreach \x in {2,...,2}
    {
    \draw[line width=0.1pt] (0,0)--({atan(\x/(\x-1))}:{-.8 });
    }
\foreach \x in {2,...,3}
    {
    \draw[line width=0.1pt] (0,0)--({atan((\x-1)/\x)}:{-.8 });
    }
\draw[line width=0.2pt, color=blue] (0.25,0.375) node[circle,fill=black,minimum size=\v,inner sep=0pt]{}--(0,0.25) node[circle,fill=blue,minimum size=\v,inner sep=0pt]{}--(-1/8,-1/16) node[circle,fill=blue,minimum size=\v,inner sep=0pt]{}--(-1/6,-1/9) node[circle,fill=blue,minimum size=\v,inner sep=0pt]{}--(-.61,-0.5175);

\draw[line width=0.1pt, color=red] (0.25,0.375) --(0.1, 0.0) node[circle,fill=red,minimum size=\v,inner sep=0pt]{}--(-0.045454545454545456, -0.09090909090909091) node[circle,fill=red,minimum size=\v,inner sep=0pt]{}--(-0.5, -0.5) node[circle,fill=red,minimum size=\v,inner sep=0pt]{}--(-0.5685, -0.5628);

\draw (-.85,-.5) node[anchor=west,color=blue]  {\small$(-12,-11)$};
\draw (-.37,-.07) node[anchor=west,color=blue]  {\small$(-6,-7)$};
\draw (-.25,0.12) node[anchor=west,color=blue]  {\small$(-2,-5)$};
\draw (0,0.37) node[anchor=west,color=blue]  {\small$(-2,-1)$};

\draw (-.8,-.57) node[anchor=west,color=red]  {\small$(-12,-11)$};
\draw (-.44,-.45) node[anchor=west,color=red]  {\small$(-10,-9)$};
\draw (0.01,-.08) node[anchor=west,color=red]  {\small$(-8,-5)$};
\draw (0.17,0.18) node[anchor=west,color=red]  {\small$(-2,-5)$};

\end{tikzpicture}
\caption{Two broken lines on $\frakD_2$ with initial slope $(-12,-11)$ are shown above. Though there are infinitely many walls in $\frakD_2$, we do not depict those with slope in the range $(2/3,1) \cup (1,2)$.  The red line has positive angular momentum, while the blue broken line has negative angular momentum and is the same as in \autoref{fig: broken line q weight}.  The terminal point $(q_1,q_2) = (0.25,0.375)$ is shown in black in the first quadrant.  The points at which the broken lines bend are shown as nodes of the corresponding color, and the exponent for each domain of linearity is shown nearby in the corresponding color.}
\label{fig: broken line angular momentum}
\end{figure}
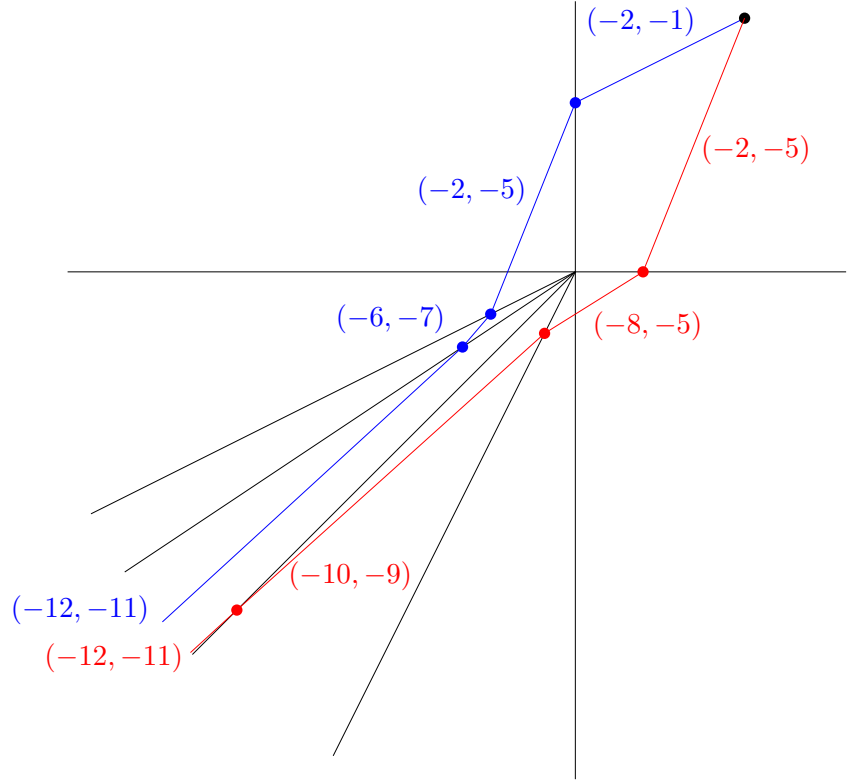

\subsection{Angular Momentum}\label{subsec: ang momentum}
We now discuss an invariant of broken lines in rank $2$ scattering diagrams that will help us distinguish which broken lines cross the Badlands.  Suppose that we are considering the set of compatible pairs corresponding to a monomial $X_1^{p_1}X_2^{p_2}$ in the cluster variable $X_n$.  

\begin{defn}
Fix a point $(q_1,q_2)$ on the linear portion with slope $(m_1,m_2)$ of a broken line terminating in the first quadrant.  The \emph{angular momentum} of the broken line at that point is the quantity $q_2m_1 - q_1m_2$.
\end{defn}

\begin{lem}[\text{\cite[Lemma 5.3]{CGM}}]
The angular momentum is constant on a broken line.
\end{lem}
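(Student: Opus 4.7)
The plan is to verify that the angular momentum $L := q_2 m_1 - q_1 m_2$ is preserved separately on each of the two kinds of pieces composing a broken line: the linear portions between bendings, and the bending points themselves at walls. Once both invariances are established the lemma is immediate, since a broken line consists of finitely many such pieces.

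The first piece is essentially tautological. On a linear portion where $\gamma'(\tau) = -v_i$, any displacement $(\Delta q_1, \Delta q_2)$ is a scalar multiple of the slope vector $(m_1, m_2) = -v_i$, so the induced change $(\Delta q_2) m_1 - (\Delta q_1) m_2$ in $L$ vanishes by the antisymmetry of the form. Hence $L$ is constant on each linear segment and only the behavior at the bends requires real attention.

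The substantive step is invariance at a bending point $P = \gamma(\tau_{i+1})$ on a wall $(\frakd, f_\frakd)$ with primitive direction vector $w$. Here I would combine two structural ingredients. First, $P$ lies on the wall $\frakd$, so $P = \alpha w$ for some scalar $\alpha$. Second, by the very definition of a wall, $f_\frakd = 1 + \sum_{k \geq 1} c_k x^{kw}$ is a power series in $x^w$ alone, and hence so is every power $f_\frakd^{v_i \cdot n}$; consequently any monomial term $c_{i+1}z^{v_{i+1}}$ of $c_i z^{v_i} f_\frakd^{v_i \cdot n}$ must satisfy $v_{i+1} = v_i + jw$ for some nonnegative integer $j$. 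Thus the jump in the slope vector across the bend, namely $-v_{i+1} - (-v_i) = -jw$, is itself a scalar multiple of $w$ and therefore parallel to $P$. Since the assignment $((a_1,a_2),(b_1,b_2)) \mapsto a_2 b_1 - a_1 b_2$ is an antisymmetric bilinear form, it vanishes on any pair of parallel vectors; applying this to $P$ and the slope change shows $L(P, -v_{i+1}) = L(P, -v_i)$, i.e., $L$ is unchanged across the bend.

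I do not anticipate any genuine obstacle in this argument. The heart of the matter is purely structural: wall-crossing functions only have monomials supported on the wall line (which is built into the definition of a wall), and the antisymmetry of the angular momentum form does the remaining work. The only mild subtlety is keeping track of signs and scalings between the slope of the broken line and the exponent $v_i$, but this does not affect the invariance because $L$ is bilinear and the observations above use only the direction (not the magnitude or orientation) of the relevant vectors.
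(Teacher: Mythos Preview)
Your argument is correct and is the standard one: invariance along linear segments is immediate from the antisymmetry of the form, and invariance across a bend follows because the bending point lies on a ray through the origin in direction $w$ while the exponent jump is a nonnegative multiple of that same $w$. Note, however, that the paper does not actually supply a proof of this lemma; it is quoted directly from \cite[Lemma~5.3]{CGM}, so there is no in-paper argument to compare against. Your write-up is essentially what one finds in the cited reference.
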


We will be primarily interested in whether the angular momentum is positive or negative.  The terminal exponent of a broken line can sometimes determine the sign of its angular momentum, and otherwise its sign also depends on the choice of terminal point.

\begin{lem}\label{lem: angular momentum sign}
Let $\gamma$ be a broken line in $\frakD_r$ terminating at $(q_1,q_2)$ in the first quadrant that contributes to the monomial $X_1^{-c_{n-1}+rb}X_2^{-c_{n-2}+ra}$ in the quantum cluster variable $X_n$.  Then
\begin{enumerate}
    \item\label{case: ams 1} if $rb \geq c_{n-1}$, the angular momentum of $\gamma$ is positive,
    \item\label{case: ams 2} if $ra \geq c_{n-2}$, the angular momentum of $\gamma$ is negative,
    \item\label{case: ams 3} otherwise, the angular momentum of $\gamma$ can be either positive, zero, or negative depending on the choice of $(q_1,q_2)$.
\end{enumerate}
\end{lem}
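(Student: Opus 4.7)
The plan is to evaluate the angular momentum directly at the terminal point. Since $L$ is constant along $\gamma$ and the last segment has exponent $v_\ell = (-c_{n-1}+rb,\,-c_{n-2}+ra)$, substitution gives
\[
L = q_2(rb - c_{n-1}) - q_1(ra - c_{n-2}).
\]
Writing $A = rb - c_{n-1}$ and $B = ra - c_{n-2}$ for the $X_1$- and $X_2$-exponents of the terminal monomial, this reads $L = q_2 A - q_1 B$. Case (3) is then immediate: when $A < 0$ and $B < 0$, we have $L = q_1|B| - q_2|A|$, which takes any sign as the ratio $q_1/q_2 \in \R_{>0}$ varies (vanishing precisely at $q_1/q_2 = |A|/|B|$).

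For Cases (1) and (2), I would use the triangular Newton polytope of the Laurent polynomial $X_n$. For $n \geq 3$ this triangle has vertices $V_1 = (-c_{n-1},\,-c_{n-2})$, $V_2 = (-c_{n-1},\,c_n)$, and $V_3 = (c_{n-3},\,-c_{n-2})$, with hypotenuse $\overline{V_2 V_3}$ lying on the line $c_{n-1}\, x + c_{n-2}\, y = -1$. The line equation follows from the identity $c_n c_{n-2} - c_{n-1}^2 = -1$, which is readily proved by induction from the recursion $c_m = r c_{m-1} - c_{m-2}$. Consequently every monomial exponent $(A, B)$ appearing in $X_n$ satisfies $c_{n-1} A + c_{n-2} B \leq -1$. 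In Case (1) with $A \geq 0$, this forces $c_{n-2} B \leq -1$, so $B \leq -1/c_{n-2} < 0$ (valid for $n \geq 4$; the edge case $n = 3$ makes Case (1) vacuous since every monomial of $X_3$ has $A = -1$). Therefore $L = q_2 A + q_1(-B) > 0$ since $q_2 A \geq 0$ and $q_1(-B) > 0$. Case (2) follows by the involution $X_1 \leftrightarrow X_2$, which reflects the scattering diagram across the diagonal $y = x$ and negates $L$.

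The main obstacle is justifying the Newton polytope bound. In the quantum setting the support of $X_n$ coincides with the classical support, so it suffices to argue classically. Via the LLZ expansion \autoref{thm: LLZ expansion}, the bound is equivalent to the combinatorial inequality $r\bigl(c_{n-1}|S_2| + c_{n-2}|S_1|\bigr) \leq c_{n-1}^2 + c_{n-2}^2 - 1$ for every compatible pair $(S_1, S_2)$ on $\calC_n$; this can be established either by invoking the known characterization of Newton polytopes of rank-$2$ cluster variables, or directly from the compatibility conditions using the cascade bounds of \autoref{prop: r cascade subset} when $r|S_2| \leq c_{n-1}$, supplemented with an auxiliary counting argument in the complementary regime $r|S_2| > c_{n-1}$.
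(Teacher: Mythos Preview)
Your handling of Case (3) matches the paper exactly. For Cases (1) and (2), your Newton-polytope argument is correct in outline---the support inequality $c_{n-1}A+c_{n-2}B\le -1$ is true and does force $B<0$ whenever $A\ge 0$---but it is a substantially heavier route than the paper's. The paper simply observes that $\frakD_r$ has no walls in the interior of the first quadrant. Tracing the final linear segment of $\gamma$ backward from $(q_1,q_2)$, one moves in direction $(m_1,m_2)$; if both components were nonnegative (and not both zero) this segment would remain in the open first quadrant forever without meeting a wall, contradicting the fact that the initial exponent $(-c_{n-1},-c_{n-2})$ lies in the closed third quadrant. Hence in Case (1) one has $m_2<0$ automatically, and $L=q_2m_1-q_1m_2>0$ follows from the signs alone. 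No appeal to the Newton polytope of $X_n$, or to any combinatorics of compatible pairs, is needed.

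One caution about your sketch of the polytope bound: invoking \autoref{prop: r cascade subset} does not give what you want, because that proposition is stated only for pairs in $\CP_{\cas}$, where $S_1\subseteq\cas(S_2)$ is assumed from the outset; it says nothing about a general compatible pair with $S_1\not\subseteq\cas(S_2)$. The triangular Newton polytope of a rank-$2$ cluster variable is indeed a known fact, but establishing it from scratch (or from the tools in \autoref{sec: shadows and cascades}) is a genuine detour. The paper's scattering-diagram argument bypasses this entirely in one line.
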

\begin{proof}
Since $q_1,q_2 > 0$, this implies that in cases (\ref{case: ams 1}) and (\ref{case: ams 2}), the angular momentum is never $0$.  The claims about the corresponding signs in cases (\ref{case: ams 1}) and (\ref{case: ams 2}) follow from the signs of the terms in the formula for angular momentum.  Since there are no walls in the interior of the first quadrant, it follows that $m_1 = -c_{n-1} + rb$ and $m_2 = -c_{n-2} + ra$ cannot both be positive.  Thus, case \ref{case: ams 3} handles when both $m_1$ and $m_2$ are negative, in which case choosing $q_1 \gg q_2$ or $q_2 \gg q_1$ will change the sign of the angular momentum.
\end{proof}

\begin{lem}[\text{\cite[Lemma 5.4]{CGM}}]\label{lem: broken line slope}
Let $\gamma$ be a broken line in $\frakD_r$ terminating in the first quadrant.  If $\gamma$ has positive (\resp negative) angular momentum, then the slope of the linear portions of $\gamma$ decreases (\resp increases) at each bend, except possibly at the boundary of the first quadrant.
\end{lem}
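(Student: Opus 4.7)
The plan is to directly relate the change of slope of $\gamma$ across a bend to the sign of the (invariant) angular momentum via a short algebraic computation. Suppose $\gamma$ bends at a wall $\frakd$ with support $\R_{\leq 0} w$, where $w = (w_1, w_2) \in M$ has non-negative components, and let the bending point be $p = t w$ with $t < 0$. By the definition of a broken line, $v_{i+1} = v_i + j w$ for some integer $j \geq 1$. Writing $s_i = v_{i,2}/v_{i,1}$ for the slope of the $i$-th linear portion, a direct computation yields
$$s_{i+1} - s_i \;=\; \frac{j\bigl(w_2 v_{i,1} - w_1 v_{i,2}\bigr)}{v_{i,1}\bigl(v_{i,1} + j w_1\bigr)},$$
while evaluating the angular momentum at $p$ using exponent $v_i$ gives
$$L \;=\; p_2 v_{i,1} - p_1 v_{i,2} \;=\; t\bigl(w_2 v_{i,1} - w_1 v_{i,2}\bigr).$$
Since $j > 0$ and $t < 0$, the numerators of $s_{i+1} - s_i$ and of $L$ have opposite signs.

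The key step is to show that the denominator $v_{i,1}(v_{i,1} + j w_1) = v_{i,1}\, v_{i+1,1}$ is strictly positive. I would prove this by a monotonicity observation: every wall support in $\frakD_r$ lies in the cone with non-negative components, so each bending gives $v_{i+1,k} = v_{i,k} + j w_k \geq v_{i,k}$ for $k = 1, 2$, and the sequences $(v_{i,1})_{i=0}^\ell$ and $(v_{i,2})_{i=0}^\ell$ are both non-decreasing. Since $\gamma$ terminates in the open first quadrant, the direction $-v_\ell$ of the final linear portion has strictly positive components, forcing $v_{\ell,1}, v_{\ell,2} < 0$; combining monotonicity with the integrality of lattice exponents yields $v_{i,k} \leq -1$ for every $i$ and $k$. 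Hence $v_{i,1}$ and $v_{i+1,1}$ are both negative, the denominator is positive, and we conclude $\sgn(s_{i+1} - s_i) = -\sgn(L)$: positive angular momentum forces the slope to decrease, while negative angular momentum forces it to increase, as required.

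The caveat \emph{except possibly at the boundary of the first quadrant} accommodates bends at the initial walls $\frakd_0$ and $\frakd_\infty$ on the coordinate axes, at which the direction of motion can rotate across an axis so that the slope --- viewed as a single real number --- may pass through $\pm \infty$, at which point the notions of ``increasing'' and ``decreasing'' need not be well-defined under the standard order on $\R$. I expect the monotonicity of the exponent coordinates to be the main non-routine input, as it uses the specific convex-cone structure of the wall supports in $\frakD_r$; everything else reduces to the sign analysis above.
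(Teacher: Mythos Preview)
The paper does not prove this lemma; it merely quotes it from \cite{CGM}, so there is no argument in the paper to compare against. Your approach---computing the slope difference directly and relating its sign to the angular momentum through the denominator $v_{i,1}v_{i+1,1}$---is the natural one, and the algebra is correct.

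There is, however, a gap in your justification that $v_{\ell,1},v_{\ell,2}<0$. You claim that because $\gamma$ terminates in the open first quadrant, the final direction $-v_\ell$ has strictly positive components. This is not automatic: the last bend may occur on the positive $y$-axis at a point $(0,y)$ with $y>q_2$, in which case the segment to $Q$ points down and to the right, giving $v_{\ell,2}>0$. (One can check directly that the bending constraint at $\frakd_\infty$ permits this.) So the inference ``terminates in the first quadrant $\Rightarrow$ $v_\ell$ has both coordinates negative'' fails.

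Fortunately you only need $v_{i,1}v_{i+1,1}>0$ at bends with $p_1\le 0$ (every bend off the boundary of the first quadrant has $p_1\le 0$), and this follows from a cleaner version of your monotonicity idea. Along $\gamma$ the first coordinate moves at rate $-v_{j,1}$, and the sequence $(v_{j,1})_j$ is non-decreasing, so these rates are non-increasing. If at some time the first coordinate is $\le 0$ while the current exponent satisfies $v_{j,1}\ge 0$, then the first coordinate is non-increasing from that time onward and can never reach $q_1>0$. Applying this just before and just after a bend at a point with $p_1\le 0$ forces $v_{i,1}<0$ and $v_{i+1,1}<0$, so the denominator is positive. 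With this correction your argument goes through.
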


\begin{rem}
It follows that a broken line with initial exponent $(m_1,m_2)$ will only bend at walls of slope less than (\resp greater than) $\frac{m_2}{m_1}$ if the broken line has negative (\resp positive) angular momentum.  Thus, $\gamma$ is a broken line with negative angular momentum associated to a cluster monomial in $\frakD_r$, then $\gamma$ only bends at walls above the Badlands. 
\end{rem}

\begin{defn}
Suppose we have integers $\ell_i \geq 0$ and $m_i \geq 1$ for each $i \in \{1,\dots,k\}$, where the $\ell_i$ are decreasing. Let $\ell$ be the sequence $[(\ell_1,m_1),(\ell_2,m_2),\dots,(\ell_k,m_k)]$.  Let $\calL^-_{r,n}[\ell] = \calL^-_{r,n}[(\ell_1,m_1),(\ell_2,m_2),\dots,(\ell_k,m_k)]$ be the broken line in $\frakD_r$ with initial slope $(-c_{n-1},-c_{n-2})$ that bends at the wall of slope $\frac{c_{\ell_{i}}}{c_{\ell_{i}+1}}$ with multiplicity $m_i$ and terminates at $(q_1,q_2)$.   By convention, the wall of slope $\frac{-1}{0}$ or $\frac{1}{0}$ is the $y$-axis. 

Similarly, let $\calL^+_{r,n}[\ell] = \calL^+_{r,n}[(\ell_1,m_1),(\ell_2,m_2),\dots,(\ell_k,m_k)]$ be the broken line in $\frakD_r$ with initial slope $(-c_{n-1},-c_{n-2})$ that bends at the wall of slope $\frac{c_{\ell_{i} + 1}}{c_{\ell_{i}}}$ with multiplicity $m_i$ and terminates at $(q_1,q_2)$.  Note that $\calL^-_{r,n}[\ell]$ has negative angular momentum while $\calL^+_{r,n}[\ell]$ has positive angular momentum. When $r = 2$, we additionally consider broken lines of the form $\calL^+_{r,n}[(\infty, m_\infty), (\ell_1,m_1),(\ell_2,m_2),\dots,(\ell_k,m_k)]$, where the broken line bends over the wall of slope $1 = \lim_{\ell \to \infty} \frac{c_{\ell + 1}}{c_{\ell}}$ with multiplicity $m_\infty$.
\end{defn}

\begin{exmp}\label{exmp: neg broken line bends}
The blue broken line in \autoref{fig: broken line angular momentum} is $\calL^-_{2,14}[(3,1),(2,1),(0,2)]$, and the red broken line is $\calL^+_{2,14}[(\infty,1),(2,1),(0,3)]$.
\end{exmp}

\section{Weights of Broken Lines}\label{sec: weights of bls}
\subsection{The Cluster Variable Case}
For any integer $r$, and for any real numbers $\alpha,\beta$, let $f^r_{\alpha,\beta}$ denote the sequence where $f^r_{\alpha,\beta}(0) = \alpha$, $f^r_{\alpha,\beta}(1) = \beta$, and $f^r_{\alpha,\beta}(k+1) = rf^r_{\alpha,\beta}(k) - f^r_{\alpha,\beta}(k-1)$ for $k \geq 1$.  

\begin{lem}\label{lem: sequence recursion}
For any $k \in \Z_{\geq 0}$ and real numbers $\alpha,\beta$, we have
$$c_{k+1}\beta - c_k\alpha = f^r_{\alpha,\beta}(k)$$
\end{lem}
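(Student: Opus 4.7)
The plan is a straightforward induction on $k$, exploiting the fact that both sides satisfy the same second-order linear recurrence with matching initial conditions.

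First I would define the auxiliary sequence $g(k) := c_{k+1}\beta - c_k\alpha$ and check the two base cases directly from the definitions of $\{c_n\}$ and $f^r_{\alpha,\beta}$. For $k=0$, we have $c_1 = 0$ and $c_0 = -1$, so $g(0) = 0\cdot\beta - (-1)\cdot\alpha = \alpha = f^r_{\alpha,\beta}(0)$. For $k=1$, we have $c_2 = rc_1 - c_0 = 1$ and $c_1 = 0$, so $g(1) = 1\cdot\beta - 0\cdot\alpha = \beta = f^r_{\alpha,\beta}(1)$.

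Next I would verify that $g$ satisfies the same recursion as $f^r_{\alpha,\beta}$. Using the defining recursion for $\{c_n\}$,
\[
g(k+1) = c_{k+2}\beta - c_{k+1}\alpha = (rc_{k+1} - c_k)\beta - (rc_{k} - c_{k-1})\alpha = r g(k) - g(k-1)
\]
for all $k \geq 1$. Since $g$ and $f^r_{\alpha,\beta}$ satisfy the identical linear recursion $x(k+1) = rx(k) - x(k-1)$ and agree on the two initial values $k = 0$ and $k = 1$, a trivial induction yields $g(k) = f^r_{\alpha,\beta}(k)$ for all $k \geq 0$.

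There is no real obstacle here: the lemma is essentially the observation that $\{c_n\}$ and $f^r_{\alpha,\beta}$ are solutions to the same constant-coefficient recurrence, so any linear combination of shifts of $\{c_n\}$ is again a solution, and the coefficients are pinned down by matching the first two values. The only care required is in getting the indexing right (in particular the shift $c_{k+1}\beta - c_k\alpha$ rather than $c_k\beta - c_{k-1}\alpha$), which is dictated by $c_0 = -1$ and $c_1 = 0$.
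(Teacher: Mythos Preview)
Your proof is correct and essentially identical to the paper's: both define $g(k) = c_{k+1}\beta - c_k\alpha$, verify the base cases $g(0)=\alpha$, $g(1)=\beta$, and check that $g$ satisfies the same recursion $g(k)=rg(k-1)-g(k-2)$ as $f^r_{\alpha,\beta}$.
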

\begin{proof}
Let $g(k) = c_{k+1}\beta - c_k\alpha$.  We first see that 
\begin{align*}g(k) &= (rc_k - c_{k-1})\beta - (rc_{k-1}-c_{k-2})\alpha\\
&= r(c_k\beta - c_{k-1}\alpha) - (c_{k-1}\beta - c_{k-2}\alpha)\\
&= rg(k-1)-g(k-2)\,.
\end{align*}
Moreover, we have $g(0) = \alpha$ and $g(1) = \beta$, so we can conclude $g(k) = f^r_{\alpha,\beta}(k)$.
\end{proof}

\begin{cor}\label{cor: cn identities}
For any $n \geq \ell \in \Z_{>0}$, we have
$$c_nc_{\ell+1} - c_{n+1}c_\ell  = c_{n-\ell + 1}$$
and 
$$c_{n+1}c_{\ell+1} - c_nc_\ell = c_{n + \ell}\,.$$
\end{cor}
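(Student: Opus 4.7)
The plan is to deduce both identities as direct applications of \autoref{lem: sequence recursion}, choosing the parameters $\alpha,\beta$ so that the recursively defined sequence $f^r_{\alpha,\beta}$ coincides with an appropriate shift or reflection of $\{c_k\}$.

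For the second identity, I would apply \autoref{lem: sequence recursion} with $\alpha = c_\ell$ and $\beta = c_{\ell+1}$, evaluated at $k = n$. Since the shifted sequence $k \mapsto c_{k+\ell}$ satisfies the same three-term recursion $x_{k+1} = rx_k - x_{k-1}$ and agrees with $f^r_{c_\ell,c_{\ell+1}}$ at $k = 0,1$, a routine induction gives $f^r_{c_\ell,c_{\ell+1}}(k) = c_{k+\ell}$. Setting $k = n$ in \autoref{lem: sequence recursion} then yields $c_{n+1}c_{\ell+1} - c_n c_\ell = c_{n+\ell}$.

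For the first identity, I would instead choose $\alpha = c_{n+1}$ and $\beta = c_n$, evaluated at $k = \ell$. The reversed sequence $k \mapsto c_{n+1-k}$ also satisfies $x_{k+1} = rx_k - x_{k-1}$: this follows from rewriting $c_{m+1} = rc_m - c_{m-1}$ as $c_{m-1} = rc_m - c_{m+1}$ and applying it with $m = n+1-k$. Its initial values at $k=0,1$ are $c_{n+1}$ and $c_n$, so $f^r_{c_{n+1},c_n}(k) = c_{n+1-k}$ as long as the subscripts stay nonnegative. The hypothesis $n \geq \ell$ guarantees that at $k = \ell$ the smallest index appearing is $n-\ell \geq 0$, so \autoref{lem: sequence recursion} yields $c_{\ell+1}c_n - c_\ell c_{n+1} = c_{n-\ell+1}$, which is the first identity.

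I do not anticipate a substantive obstacle here: the only nontrivial step is recognizing that the sequence $\{c_k\}$ can be both shifted forward (trivially) and reflected (using the reversibility of the recursion) and still match the initial data of $f^r_{\alpha,\beta}$ for suitable $\alpha,\beta$. The index condition $n \geq \ell$ is precisely what is needed for the reflected sequence to remain in the range where $\{c_k\}$ is defined.
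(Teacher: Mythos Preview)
Your proposal is correct and matches the paper's approach: both identities are obtained by applying \autoref{lem: sequence recursion} with $(\alpha,\beta)$ chosen so that $f^r_{\alpha,\beta}$ becomes a shift or reflection of $\{c_k\}$. The only cosmetic difference is that for the second identity the paper takes $(\alpha,\beta)=(c_n,c_{n+1})$ and evaluates at $k=\ell$, whereas you take $(\alpha,\beta)=(c_\ell,c_{\ell+1})$ and evaluate at $k=n$; by the symmetry of $c_{n+1}c_{\ell+1}-c_nc_\ell$ in $n$ and $\ell$ these are interchangeable.
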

\begin{proof}
For the first statement, apply \autoref{lem: sequence recursion} with $\alpha = c_{h + 1}$ and $\beta = c_h$.  It is then straightforward to check that $f^r_{c_{h+1},c_h}(\ell) = c_{h-\ell + 1}$.

For the second statement, apply \autoref{lem: sequence recursion} with $\alpha = c_h$ and $\beta = c_{h+1}$.  It is then straightforward to check that $f^r_{c_{h},c_{h+1}}(\ell) = c_{h + \ell}$.
\end{proof}

\begin{prop}\label{prop: partition formula negative am}
The quantum weight of the broken line $\calL^-_{r,n+2}[(\ell_1,m_1),\dots,(\ell_k,m_k)]$ is
$$\prod_{i = 1}^k \binom{c_{n-\ell_i + 1} - r\sum_{j < i} m_{\ell_j}c_{\ell_j - \ell_i+1}}{m_i}_{q^{2r}}\,.$$
\end{prop}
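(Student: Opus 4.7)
The plan is to establish the formula by explicitly tracking the exponent of each linear segment of the broken line and then applying the quantum weight recursion at each bend. The whole computation reduces to repeated application of the sequence identities from \autoref{cor: cn identities}.

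First I will write $v_{i-1}$ for the exponent of the domain of linearity of $\calL^-_{r,n+2}[(\ell_1,m_1),\dots,(\ell_k,m_k)]$ immediately preceding the $i$-th bend, so that $v_0 = (-c_{n+1},-c_n)$. Since the $i$-th wall has support $\R_{\leq 0}(c_{\ell_i+1},c_{\ell_i})$ and wall-crossing function $1 + z^{r(c_{\ell_i+1},c_{\ell_i})}$, the broken line bending rule picks out the $(m_i+1)$-st term of $c_{i-1,q} z^{v_{i-1}} f_{\frakd(i)}^{v_{i-1}\cdot n_i}$, giving $v_i = v_{i-1} + m_i\, r\,(c_{\ell_i+1},c_{\ell_i})$. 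Iterating yields
$$v_{i-1} = (-c_{n+1},-c_n) + r\sum_{j<i} m_j\,(c_{\ell_j+1},c_{\ell_j})\,.$$

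Next I will compute $v_{i-1}\cdot n_i$ where $n_i = (c_{\ell_i},-c_{\ell_i+1})$ is a primitive normal to the $i$-th wall, oriented so that $v_{i-1}\cdot n_i > 0$. The initial contribution is
$$(-c_{n+1},-c_n)\cdot(c_{\ell_i},-c_{\ell_i+1}) = c_n c_{\ell_i+1} - c_{n+1} c_{\ell_i} = c_{n-\ell_i+1}$$
by \autoref{cor: cn identities}, since $n \geq \ell_i$. Because the sequence $(\ell_i)$ is strictly decreasing, for each $j<i$ we have $\ell_j > \ell_i$, and applying \autoref{cor: cn identities} to the pair $(\ell_j,\ell_i)$ gives
$$(c_{\ell_j+1},c_{\ell_j})\cdot(c_{\ell_i},-c_{\ell_i+1}) = -\bigl(c_{\ell_j}c_{\ell_i+1} - c_{\ell_j+1}c_{\ell_i}\bigr) = -c_{\ell_j-\ell_i+1}\,.$$
Combining these identities yields
$$v_{i-1}\cdot n_i = c_{n-\ell_i+1} - r\sum_{j<i} m_j\, c_{\ell_j-\ell_i+1}\,,$$
which is exactly the argument of the quantum binomial in the target expression, after identifying $m_{\ell_j}$ with $m_j$ (i.e., indexing multiplicities by the associated wall rather than by order of bending).

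Finally, the definition of quantum weight gives $c_{i,q} = c_{i-1,q}\binom{v_{i-1}\cdot n_i}{m_i}_{q^{2r}}$, so the overall quantum weight of the broken line telescopes to
$$\prod_{i=1}^k \binom{v_{i-1}\cdot n_i}{m_i}_{q^{2r}}\,,$$
and substituting the computed dot products yields the claimed formula. The only slightly delicate point is verifying the sign convention so that $v_{i-1}\cdot n_i > 0$ at each bend (which follows from $n > \ell_i$ and the fact that $(\ell_j)$ is decreasing, ensuring all quantum binomials have nonnegative upper arguments); otherwise the argument is a direct unwinding of the bending recursion with no conceptual obstruction.
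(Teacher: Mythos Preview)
Your proof is correct and follows essentially the same approach as the paper's own proof: both track the exponent vector of the broken line after each bend, compute the dot product with the primitive normal via \autoref{cor: cn identities}, and then telescope the quantum weight recursion. Your treatment is in fact more explicit (carefully verifying the sign of the normal and the decreasing condition on the $\ell_i$), whereas the paper states the same induction more tersely.
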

\begin{proof}
We prove this by induction on the number of crossings.  Note that the broken line $\calL^-_{r,n}[(\ell_1,m_1),\dots,(\ell_{k-1},m_{k-1})]$ has terminal exponent $$\left(-c_{n+1} + \sum_{j=1}^{k-1} m_j c_{\ell_j + 1} ,-c_n + \sum_{j=1}^{k-1} m_j c_{\ell_j}\right)\,.$$ 
Using \autoref{cor: cn identities}, we can calculate that the dot product of this terminal exponent and the primitive normal vector to the wall of slope $\frac{c_{\ell_k}}{c_{\ell_k+1}}$ is $d \colonequals c_{n-\ell_k + 1} - r\sum_{j < k} m_{\ell_j}c_{\ell_j - \ell_k+1}$.  The desired formula is then readily calculated by multiplying the quantum weight of $\calL^-_{r,n+2}[(\ell_1,m_1),\dots,(\ell_{k-1},m_{k-1})]$ by the binomial coefficient $\binom{d}{m_k}_{q^{2r}}$.
\end{proof}
\begin{rem}
In the above statement, $m_1$ is the multiplicity of the bending of $\gamma$ at the $y$-axis.  \end{rem}

\begin{prop}\label{prop: partition formula positive am}
The quantum weight of the broken line $\calL^+_{r,n+2}[(\ell_1,m_1),\dots,(\ell_k,m_k)]$ is
$$\prod_{i = 1}^k \binom{c_{n+\ell_i} - r\sum_{j < i} m_{\ell_j}c_{\ell_j - \ell_i}}{m_i}_{q^{2r}}\,.$$
\end{prop}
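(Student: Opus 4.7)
The plan is to prove Proposition 5.3 by induction on the number of wall crossings $k$, mirroring the structure of the proof of Proposition 5.2. The base case $k = 0$ is immediate: a broken line with no bendings has quantum weight $c_{0,q} = 1$, which agrees with the empty product.

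For the inductive step, I would first compute the terminal exponent of $\calL^+_{r,n+2}[(\ell_1,m_1),\dots,(\ell_{k-1},m_{k-1})]$. Whereas the negative angular momentum case uses the ``flat'' walls $\R_{\leq 0}(c_{\ell_j+1},c_{\ell_j})$ with wall function $1 + x_1^{rc_{\ell_j+1}}x_2^{rc_{\ell_j}}$, the positive case bends at the ``steep'' walls $\R_{\leq 0}(c_{\ell_j},c_{\ell_j+1})$ with wall function $1 + x_1^{rc_{\ell_j}}x_2^{rc_{\ell_j+1}}$. Hence each bending of multiplicity $m_j$ shifts the exponent by a multiple of $(c_{\ell_j},c_{\ell_j+1})$ rather than $(c_{\ell_j+1},c_{\ell_j})$, yielding a terminal exponent of the form
$$\left(-c_{n+1} + \sum_{j=1}^{k-1} m_j c_{\ell_j},\; -c_n + \sum_{j=1}^{k-1} m_j c_{\ell_j+1}\right)\,,$$
i.e., the coordinates of the negative angular momentum expression in the proof of Proposition 5.2 are swapped. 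This is exactly what one would expect from the reflective symmetry between positive and negative angular momentum broken lines across the main diagonal of the scattering diagram.

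Next I would take the dot product of this terminal exponent with a primitive normal vector to the wall $\R_{\leq 0}(c_{\ell_k},c_{\ell_k+1})$, chosen with the sign that makes $v_{k-1}\cdot n > 0$ (so that the binomial coefficient is nonzero and the monomial is a valid term of $f^{v_{k-1}\cdot n}_{\frakd(k-1)}$). The normal vectors perpendicular to $(c_{\ell_k},c_{\ell_k+1})$ are the two primitive vectors $\pm(c_{\ell_k+1},-c_{\ell_k})$. Expanding the dot product and grouping terms produces one ``trunk'' term $c_{n+1}c_{\ell_k+1}-c_n c_{\ell_k}$, which collapses to $c_{n+\ell_k}$ by the second identity of Corollary 3.2, and a sum of ``correction'' terms of the form $c_{\ell_j}c_{\ell_k+1} - c_{\ell_j+1}c_{\ell_k}$, each of which is evaluated via the first identity of Corollary 3.2. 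These together produce the desired value $d$ that appears in the $k$-th binomial coefficient of the claimed formula.

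Finally, applying the recursion $c_{k,q} = c_{k-1,q}\binom{v_{k-1}\cdot n}{m_k}_{q^{2r}}$ and invoking the inductive hypothesis closes the argument. The main subtlety, as in the negative case, is bookkeeping: choosing the correct sign of the primitive normal (which is forced by the $v\cdot n > 0$ condition once one checks that the broken line crosses the wall from the side dictated by Lemma 4.4), and invoking the two identities of Corollary 3.2 in the right order. Once these are tracked carefully, the computation parallels the negative angular momentum case exactly, with the roles of the two identities of Corollary 3.2 interchanged.
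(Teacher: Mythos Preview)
Your proposal is correct and takes essentially the same approach as the paper. The paper's own proof is a one-sentence reference back to the argument for Proposition~5.2, noting only that the second identity of Corollary~3.2 is now the relevant one; you have correctly unpacked that reference into the explicit inductive computation (terminal exponent with swapped coordinates, dot product with the primitive normal, then Corollary~3.2). One very minor quibble: your closing phrase ``with the roles of the two identities of Corollary~3.2 interchanged'' is slightly misleading, since in the negative case both the trunk and the correction terms used the \emph{first} identity, so there is nothing to interchange---rather, the trunk term now requires the second identity while the correction terms are handled as before.
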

\begin{proof}
This follows from a similar argument to that for \autoref{prop: partition formula negative am}, except using the second equality in \autoref{cor: cn identities} rather than the first.
\end{proof}

\begin{defn}
Let $\BL(\ell,h,a,b)$ be the collection of broken lines with initial exponent $(-\ell,-h)$ and terminal exponent $(-\ell + rb, -h + ra)$.   Let $\BL_-(\ell,h,a,b)$ (\resp $\BL_+(\ell,h,a,b)$) be the set of broken lines in $\BL(\ell,h,a,b)$ with negative (\resp positive) angular momentum.

Given a set of broken lines $B$, let $|B|_q$ denote the sum of the weights of the broken lines in $B$.  We define $|B| \in \Z$ to be the value of $|B|_q$ under the substitution $q = 1$.
\end{defn}

\begin{lem}\label{lem: broken line recursion}
If $rb \leq c_n$, we have that
$$|\BL_-(c_{n+1},c_n,a,b)|_q= \sum_{t=0}^{a} \binom{c_{n+1} - rb}{t}_{q^{2r}}|\BL_-(c_n,c_{n-1},r(a-t)-b,a - t)|_q$$
\end{lem}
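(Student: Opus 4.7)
The plan is to establish the recursion by constructing a weight-preserving decomposition of broken lines in $\BL_-(c_{n+1}, c_n, a, b)$ according to the multiplicity of any final bend at the $y$-axis. Because there are no walls in the interior of the first quadrant, a bend at the $y$-axis of such a broken line must be its last bend, and by \autoref{lem: broken line slope} this is the sole bend at which the slope is not forced to strictly increase.

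Given $\gamma = \calL^-_{r, n+2}[(\ell_1, m_1), \ldots, (\ell_k, m_k)] \in \BL_-(c_{n+1}, c_n, a, b)$, I would set $t = m_k$ if $\ell_k = 0$ and $t = 0$ otherwise, and define $\gamma'$ by removing this $y$-axis bend (when present) and then shifting the remaining bends via $\ell_i \mapsto \ell_i - 1$. Using the Chebyshev-like recursion $c_{\ell + 1} = r c_\ell - c_{\ell - 1}$, a direct calculation shows that $\gamma'$ has initial exponent $(-c_n, -c_{n-1})$ and terminal exponent $(-c_n + r(a - t), -c_{n-1} + r(r(a-t) - b))$, placing it in $\BL_-(c_n, c_{n-1}, r(a-t) - b, a - t)$.

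For the weight, \autoref{prop: partition formula negative am} provides the key computations: when $\ell_k = 0$, the factor from the $y$-axis bend simplifies, using $c_1 = 0$ and hence $\sum_{j < k} m_j c_{\ell_j + 1} = b$, to $\binom{c_{n+1} - rb}{t}_{q^{2r}}$; meanwhile the identity $c_{n - \ell_i + 1} = c_{(n-1) - (\ell_i - 1) + 1}$ together with the shift-invariance of $c_{\ell_j - \ell_i + 1}$ ensures that the remaining product of quantum binomials coincides termwise with the weight of $\gamma'$. Thus $\mathrm{weight}_q(\gamma) = \binom{c_{n+1} - rb}{t}_{q^{2r}} \cdot \mathrm{weight}_q(\gamma')$, and summing over $\gamma$ grouped by the value of $t$ yields the stated identity.

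The hard part will be verifying that the shift $\ell_i \mapsto \ell_i - 1$ genuinely defines a bijection onto the target set. The delicate case occurs when some $\ell_i = 1$ appears in $\gamma$: the shift then produces a $y$-axis bend in $\gamma'$, and the distinct conventions for the exponent change at the $y$-axis versus the $\ell = 1$ wall require careful bookkeeping to confirm that the terminal exponent of $\gamma'$ still matches the prediction from the $ra - b = \sum m_i c_{\ell_i - 1}$ identity. I expect the hypothesis $rb \leq c_n$ to be used precisely to control this edge case, ensuring the bijection lands in the claimed $\BL_-$ set and that the sum on the right-hand side truncates correctly at $t = a$.
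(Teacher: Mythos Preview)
Your proposal is correct and follows essentially the same approach as the paper: extract the $y$-axis multiplicity $t$, drop that final bend, shift each remaining $\ell_i$ down by one (simultaneously replacing $n$ by $n-1$), and invoke \autoref{prop: partition formula negative am} to see that the weight factors as $\binom{c_{n+1}-rb}{t}_{q^{2r}}$ times the weight of $\gamma'$. The one place you over-anticipate difficulty is the ``hard part'': because the quantum-binomial factors in \autoref{prop: partition formula negative am} depend only on the differences $n-\ell_i$ and $\ell_j-\ell_i$, the shift $(\ell_i,n)\mapsto(\ell_i-1,n-1)$ leaves every factor literally unchanged, so the $\ell_i=1$ case requires no special bookkeeping and the hypothesis $rb\le c_n$ is not used there (it is what guarantees the target set $\BL_-(c_n,c_{n-1},\cdot,\cdot)$ consists of genuine negative-angular-momentum broken lines).
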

\begin{proof}
Fix a broken line $\gamma = \calL^-_{r,n+2}[(\ell_1,m_1),\dots,(\ell_k,m_k)]$ in $\BL_-(c_{n+1},c_n,a,b)$.  Let $t$ denote the multiplicity of the crossing over the $y$-axis, i.e., $t = m_k$ if $\ell_k = 0$ and $t = 0$ otherwise.  If $t = 0$, let $\gamma' = \calL^-_{r,n+1}[(\ell_1 - 1,m_1),\dots,(\ell_{k-1} - 1,m_{k-1})]$, and otherwise let $\gamma' = \calL^-_{r,n+1}[(\ell_1 - 1,m_1),\dots,(\ell_{k} - 1,m_{k})]$.  Note that we have $\gamma' \in \BL_-(c_n,c_{n-1},r(a-t)-b,a - t)$.  By \autoref{prop: partition formula negative am}, we have
$|\gamma|_q = \binom{c_{n+1} - rb}{t}_q |\gamma'|_q$. 
Note that the map from $\gamma$ to $\gamma'$ is bijective (considered as a map on unweighted broken lines).  Therefore, summing over all choices of $\gamma$ in $\BL_-(c_{n+1},c_n,a,b)$ yields the desired equality.
\end{proof}

\subsection{The Kronecker Case}
We first handle broken lines of positive angular momentum and then broken lines of initial exponent $(-h,-h)$.  The latter are the broken lines corresponding to the theta basis elements of $\calA_q(2,2)$ that are not cluster monomials.  

\begin{prop}\label{prop: partition formula kron positive am}
In the Kronecker scattering diagram $\frakD_2$, the quantum weight of the broken line $\calL^+_{r,n+2}[(\infty,m_\infty), (\ell_1,m_1),\dots,(\ell_k,m_k)]$ is
$$[m_\infty + 1]_{q^{4}}\prod_{i = 1}^k \binom{(n+ \ell_i - 2m_\infty) - 2\sum_{j < i} m_{\ell_j}(\ell_j - \ell_i)}{m_i}_{q^{4}}\,.$$
\end{prop}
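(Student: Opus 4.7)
The plan is to induct on $k$, running parallel to the argument for \autoref{prop: partition formula positive am}, with an augmented base case to handle the bending at the non-cluster wall $\frakd_1$ of slope $1$.

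For the base case $k = 0$, the broken line bends only at $\frakd_1$ with multiplicity $m_\infty$. Starting from $v_0 = (-c_{n+1}, -c_n)$ and pairing with the primitive normal $(-1,1)$ to $\frakd_1$, we obtain $v_0 \cdot (-1, 1) = c_{n+1} - c_n$. The pivotal simplification in the Kronecker setting is that $c_m = m - 1$ for $m \geq 1$, a consequence of the recurrence at $r = 2$, so $v_0 \cdot (-1, 1) = 1$. The quantum wall-crossing function at $\frakd_1$, namely $\sum_{j \geq 0}[j+1]_{q^4}(x_1 x_2)^{2j}$, raised to the first power returns itself, and the coefficient of the $(m_\infty + 1)$-th term is precisely $[m_\infty + 1]_{q^4}$, matching the claim.

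For the inductive step, suppose the formula is established through the first $k-1$ cluster bendings. The exponent just before the $k$-th cluster bending is
\[
v_{k-1} = \Bigl(-c_{n+1} + 2m_\infty + 2\sum_{j<k} m_j c_{\ell_j},\ -c_n + 2m_\infty + 2\sum_{j<k} m_j c_{\ell_j+1}\Bigr),
\]
since the $\frakd_1$ bending shifted the exponent by $(2m_\infty, 2m_\infty)$ and each cluster bend at slope $c_{\ell_j+1}/c_{\ell_j}$ shifted it by $(2m_j c_{\ell_j}, 2m_j c_{\ell_j+1})$. I would then take the dot product of $v_{k-1}$ with the appropriate primitive normal to the $k$-th wall and simplify by applying both identities in \autoref{cor: cn identities}: the initial-exponent terms collapse to $c_{n + \ell_k}$ via $c_{n+1} c_{\ell_k+1} - c_n c_{\ell_k} = c_{n + \ell_k}$; each $j < k$ contribution collapses to a multiple of $c_{\ell_j - \ell_k + 1}$ via $c_{\ell_j} c_{\ell_k+1} - c_{\ell_j+1} c_{\ell_k} = c_{\ell_j - \ell_k + 1}$; and the slope-$1$ shift contributes a term proportional to $c_{\ell_k+1} - c_{\ell_k}$, which specializes to $1$ in the Kronecker case. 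Substituting $c_m = m - 1$ throughout converts the dot product into the integer displayed at the top of the $k$-th binomial coefficient. Multiplying by $\binom{v_{k-1} \cdot n}{m_k}_{q^4}$ and the weight from the inductive hypothesis closes the induction.

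The principal obstacle is the careful bookkeeping of signs and indices when collapsing $v_{k-1} \cdot n$ into the three groups of contributions above, together with verifying that the primitive normal is oriented so that $v_{k-1} \cdot n > 0$. The positivity follows from the strictly decreasing slopes of positive-angular-momentum trajectories granted by \autoref{lem: broken line slope}, combined with the initial slope condition $c_n/c_{n+1} < 1$ which also ensures that $\frakd_1$ is reached as the first wall of sufficiently high slope.
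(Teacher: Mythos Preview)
Your proposal is correct and follows precisely the approach the paper implicitly intends: the paper states this proposition without proof, relying on the reader to adapt the inductive argument already spelled out for \autoref{prop: partition formula negative am} and \autoref{prop: partition formula positive am}. Your base case handling of the bend at $\frakd_1$ via $v_0\cdot(-1,1)=c_{n+1}-c_n=1$ and the inductive collapse using both identities of \autoref{cor: cn identities}, together with the Kronecker specialization $c_m=m-1$, is exactly what is needed.
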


\begin{lem}\label{lem: bl recursion pos}
In the Kronecker scattering diagram $\frakD_2$, we have that
$$|\BL_+(h+1,h,a,b)|_q = \sum_{t=0}^{b} \binom{h - 2a}{t}_{q^{4}}|\BL_+(h,h-1,b - t,2(b-t)-a)|_q$$
whenever $2a \leq h+1$.
\end{lem}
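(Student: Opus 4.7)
The plan is to mirror the proof of \autoref{lem: broken line recursion}, with the $x$-axis in the positive-angular-momentum setting playing the role of the $y$-axis in the negative case. Parametrize each $\gamma \in \BL_+(h+1, h, a, b)$ as $\gamma = \calL^+_{2, h+3}[(\infty, m_\infty), (\ell_1, m_1), \ldots, (\ell_k, m_k)]$ with $\ell_1 > \cdots > \ell_k \geq 0$ (allowing $m_\infty = 0$), and set $t := m_k$ when $\ell_k = 0$ (the $x$-axis bend multiplicity) and $t := 0$ otherwise. The range $t \in \{0, 1, \ldots, b\}$ is forced because the $x$-axis crossing contributes $2t$ to the total $x$-change $2b$.

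The crucial step is to isolate the factor $\binom{h - 2a}{t}_{q^4}$ from $w_q(\gamma)$ coming from the $x$-axis bend. Because an $x$-axis crossing does not affect the $y$-coordinate of the exponent, the exponent immediately before the $x$-axis bend has $y$-coordinate $-h + 2a$, so its dot product with the primitive normal $(0, -1)$ equals $h - 2a$; hence the $x$-axis contribution to $w_q(\gamma)$ is exactly $\binom{h - 2a}{t}_{q^4}$, matching the binomial in the lemma. One then associates to $\gamma$ a broken line $\gamma' \in \BL_+(h, h - 1, b - t, 2(b - t) - a)$ via the index shift $n \mapsto n - 1$, $\ell_i \mapsto \ell_i - 1$, deleting the final $x$-axis bend when $t > 0$. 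Using $c_{h + 1} = h$ and $c_h = h - 1$ for $r = 2$, the shifted broken line has initial exponent $-(h, h - 1)$. By tracking the exponent changes at each bend (with wall-function contributions $(2 c_{\ell_i}, 2 c_{\ell_i + 1})$ for cluster walls, $(2 m_\infty, 2 m_\infty)$ for the non-cluster wall, and $(2t, 0)$ for the $x$-axis bend) and applying the recursion $c_{\ell - 1} = 2 c_\ell - c_{\ell + 1}$, the terminal parameters of $\gamma'$ work out to $(a', b') = (b - t, 2(b - t) - a)$. Summing the resulting $q$-weighted bijection $\gamma \leftrightarrow (\gamma', t)$ then yields the claimed identity.

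The main obstacle is verifying the factorization $w_q(\gamma) = \binom{h - 2a}{t}_{q^4} \cdot w_q(\gamma')$ at the level of the remaining $i < k$ factors. Unlike the negative case, where the argument $c_{n - \ell_i + 1}$ from \autoref{prop: partition formula negative am} is manifestly invariant under the shift $n \mapsto n - 1$, $\ell_i \mapsto \ell_i - 1$, the corresponding positive-case quantity involves $c_{n + \ell_i}$, whose value does change under this shift. Matching the remaining factors will therefore require a subtler compensation, most likely through careful bookkeeping of the $[m_\infty + 1]_{q^4}$ prefactor via \autoref{prop: partition formula kron positive am}, potentially combined with a direct appeal to \autoref{cor: cn identities}. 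Additional care is required for broken lines with $\ell_{k - 1} = 1$, where the shift converts a $y$-axis bend of $\gamma$ into an $x$-axis bend of $\gamma'$; invoking the hypothesis $2a \leq h + 1$ together with \autoref{lem: angular momentum sign} should ensure these boundary configurations remain compatible with the bijection.
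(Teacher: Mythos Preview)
Your plan is exactly the paper's approach: fix $t$ as the $x$-axis bending multiplicity, define $\gamma'$ by dropping that bend and shifting $\ell_i \mapsto \ell_i - 1$ (so that $\gamma' \in \BL_+(h,h-1,b-t,2(b-t)-a)$), and invoke the Kronecker weight formula to get the factorization. Your direct computation of the $x$-axis factor as $\binom{h-2a}{t}_{q^4}$ via the dot product with the primitive normal $(0,-1)$ is correct and in fact cleaner than what the paper writes (the paper's displayed binomial $\binom{c_h-2b}{t}_q$, as well as its swapped $t=0$/$t\neq 0$ cases and its stray $\BL_-$'s, are evidently typos).

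Your last paragraph, however, overcomplicates the comparison. The paper does \emph{not} introduce any ``subtler compensation'' through the $[m_\infty+1]_{q^4}$ prefactor or through \autoref{cor: cn identities}; it simply asserts that the factorization follows from \autoref{prop: partition formula kron positive am}. So your worry that $c_{n+\ell_i}$ (equivalently, the integer $n+\ell_i$ appearing in the Kronecker formula) fails to be invariant under $(n,\ell_i)\mapsto(n-1,\ell_i-1)$ is an observation the paper does not explicitly address. In other words, the ``obstacle'' you flag is not resolved by some extra argument you are missing from the paper---the paper's proof is just as terse on this point as your outline. If you want to close this off rigorously, do not look for cancellation in the prefactor; instead compute each bending factor for $\gamma$ and for $\gamma'$ directly from the exponent--normal dot products (as you did for the $x$-axis) and check they agree term by term. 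The boundary case you mention (an $\ell_{k-1}=1$ bend of $\gamma$ becoming an $\ell'=0$ bend of $\gamma'$) is handled automatically once you work at that level.
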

\begin{proof}
Fix a broken line $\gamma = \calL^+_{r,h+3}[(\infty,m_\infty), (\ell_1,m_1),\dots,(\ell_k,m_k)]$ in $\BL_-(h+1,h,a,b)$.  Let $t$ denote the multiplicity of the crossing over the $x$-axis, i.e., $t = m_k$ if $\ell_k = 0$ and $t = 0$ otherwise.  If $t = 0$, let $\gamma' = \calL^+_{r,h+2}[(\infty,m_\infty),(\ell_1 - 1,m_1),\dots,(\ell_{k-1} - 1,m_{k-1})]$, and otherwise let $\gamma' = \calL^+_{r,h+2}[(\infty,m_\infty),(\ell_1 - 1,m_1),\dots,(\ell_{k} - 1,m_{k})]$.  Note that we have $\gamma' \in \BL_-(h,h-1,b - t,2(b-t)-a)$.  By \autoref{prop: partition formula kron positive am}, we have
$|\gamma|_q = \binom{c_h - 2b}{t}_q |\gamma'|_q$. 
Note that the map from $\gamma$ to $\gamma'$ is bijective (considered as a map on unweighted broken lines).  Therefore, summing over all choices of $\gamma$ in $\BL_-(h+1,h,a,b)$ yields the desired equality.
\end{proof}

\begin{prop}\label{prop: partition formula kron negative am}
Suppose $r = 2$ and $2b \leq h$.  The quantum weight of the broken line in $\BL_-(h,h,a,b)$ crossing at the wall $\frakd_{\ell_i/(\ell_i + 1)}$ with multiplicity $m_i$ for $\ell_1 > \cdots > \ell_k \geq -1$ is
$$\prod_{i = 1}^k \binom{h - \sum_{j=1}^{i-1} 2m_j(\ell_i - \ell_j)}{m_i}_{q^{4}}\,. $$
\end{prop}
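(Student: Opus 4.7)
The plan is to argue by induction on $k$, the number of bendings, mirroring the proof strategy of \autoref{prop: partition formula negative am}. The base case $k = 0$ corresponds to a broken line with no bendings, whose quantum weight is the empty product $1$. For the inductive step, I would remove the last bending to obtain a broken line $\gamma'$ with $k - 1$ bendings whose quantum weight, by the inductive hypothesis, equals the product of the first $k - 1$ factors in the stated formula.

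By the recursive definition of the quantum weight, we have $w_q(\gamma) = w_q(\gamma') \cdot \binom{v_{k-1} \cdot n_k}{m_k}_{q^{4}}$, where $v_{k-1}$ is the exponent of the domain of linearity preceding the $k$-th wall and $n_k$ is the primitive normal vector of $\frakd_{\ell_k/(\ell_k+1)}$ with $v_{k-1} \cdot n_k > 0$. Iterating the wall-crossing rule $v_i = v_{i-1} + 2m_i(\ell_i + 1, \ell_i)$ from the initial exponent $v_0 = (-h, -h)$ gives
$$v_{k-1} = (-h, -h) + \sum_{j=1}^{k-1} 2m_j(\ell_j + 1, \ell_j).$$
Pairing with $n_k = (\ell_k, -(\ell_k + 1))$ and using the identity $(\ell_j + 1)\ell_k - \ell_j(\ell_k + 1) = \ell_k - \ell_j$, the cross terms collapse and recover exactly the $i = k$ factor in the product.

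The main subtlety arises at the boundary walls. When $\ell_k = 0$ the broken line crosses the full $x$-axis $\frakd_0$ rather than a ray in the third quadrant, and when $\ell_k = -1$ it crosses the $y$-axis $\frakd_\infty$, whose primitive wall direction must be chosen with opposite sign in order to match the function $1 + x_2^2$. In each boundary case one must verify directly that the same dot-product identity continues to hold; this is straightforward once one notes that the identity is insensitive to the sign choice of $n_i$ after imposing $v_{i-1} \cdot n_i > 0$. The hypothesis $2b \leq h$, together with the strict monotonicity $\ell_1 > \cdots > \ell_k$, ensures that the broken line actually exists with negative angular momentum and that every intermediate quantity $v_{i-1} \cdot n_i$ is a non-negative integer, so all quantum binomial coefficients in the formula are well-defined elements of $\Z_{\geq 0}[q^{\pm 1}]$.
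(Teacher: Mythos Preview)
Your proposal is correct and follows essentially the same approach as the paper: compute the exponent after each bending inductively as $v_{i} = (-h,-h) + \sum_{j \le i} 2m_j(\ell_j+1,\ell_j)$, then pair with the primitive normal of the next wall and simplify via the identity $(\ell_j+1)\ell_k - \ell_j(\ell_k+1) = \ell_k - \ell_j$ to obtain the stated binomial factor. Your treatment is slightly more explicit than the paper's in handling the boundary walls $\ell_k \in \{0,-1\}$ and in justifying the sign of the normal, but the underlying argument is the same.
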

\begin{proof}
    Denote this broken line by $\gamma$.  Let $p_i$ be the exponent of $\gamma$ after bending at the wall $\frakd_{\ell_i/(\ell_i+1)}$, and let $p_0 = (-h,-h)$.  It is straightforward to prove by induction that we have 
    $$p_i = \left( -h + 2\sum_{j=-1}^i m_j \ell_j, -h + 2\sum_{j=-1}^i m_j(\ell_j + 1)\right)\,$$
    Thus, the dot product of $p_{i-1}$ with the primitive normal vector to the wall $\frakd_{\ell_i/(\ell_i+1)}$ is 
    $$p_{i-1} \cdot (-\ell_{i}, \ell_{i}+1) =  h - \sum_{j=1}^{i-1} 2m_j(\ell_j - \ell_i)\,.$$ 
    The weight of the broken line is then the product of quantum binomial coefficients $\binom{p_{i-1} \cdot (-\ell_{i}, \ell_{i}+1)}{m_i}_{q^4}$.
\end{proof}

\begin{lem}\label{lem: broken line recursion kronecker greedy}
If $r = 2$ and $2b \leq h$, we have that
$$|\BL_-(h,h,a,b)|_q = \sum_{t=0}^{a} \binom{h - 2b}{t}_{q^{4}}|\BL_-(h,h,2(a-t)-b,a-t)|_q\,.$$
\end{lem}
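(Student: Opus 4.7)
The plan is to mirror the bijective argument used in the proofs of \autoref{lem: broken line recursion} and \autoref{lem: bl recursion pos}, substituting \autoref{prop: partition formula kron negative am} for the earlier weight formulas. Fix a broken line $\gamma \in \BL_-(h, h, a, b)$ crossing walls $\frakd_{\ell_i/(\ell_i+1)}$ with multiplicities $m_i$, where $\ell_1 > \cdots > \ell_k \geq -1$. Let $t$ denote the multiplicity of the crossing at the $y$-axis, so $t = m_k$ if $\ell_k = -1$ and $t = 0$ otherwise.

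I then define an associated broken line $\gamma'$ by shifting the indices of all remaining bendings down by $1$: if $t = 0$, $\gamma'$ has crossings at $\frakd_{(\ell_i - 1)/\ell_i}$ with multiplicity $m_i$ for $i = 1, \ldots, k$; if $t > 0$, the same shift is applied to $i = 1, \ldots, k-1$, with the $y$-axis bending absorbed into the shift. A direct calculation tracking the exponent change contributed by the shifted and removed crossings verifies that $\gamma' \in \BL_-(h, h, 2(a - t) - b, a - t)$.

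The key step is to verify from the product formula in \autoref{prop: partition formula kron negative am} that $|\gamma|_q = \binom{h - 2b}{t}_{q^4}\,|\gamma'|_q$. This should follow by isolating the factor corresponding to the $y$-axis crossing (which simplifies to $\binom{h - 2b}{t}_{q^4}$ upon substituting $\ell_k = -1$ and using the expression for $b$ in terms of the $m_j(\ell_j + 1)$ for $j < k$) and observing that the remaining factors are invariant under the shift $\ell_i \mapsto \ell_i - 1$, because only the differences $\ell_i - \ell_j$ appear in the product. Since $\gamma \mapsto (\gamma', t)$ is a bijection on unweighted broken lines between $\BL_-(h, h, a, b)$ and $\bigsqcup_{t = 0}^{a} \BL_-(h, h, 2(a-t) - b, a-t) \times \{t\}$, summing the weight factorization over all $\gamma$ yields the desired recursion.

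The main technical obstacle I anticipate is correctly pinning down the $y$-axis contribution to both the weight factor and the terminal exponent under the boundary ``$\ell_k = -1$'' convention; once this is verified, the bijection and summation argument proceeds in direct analogy with \autoref{lem: broken line recursion} and \autoref{lem: bl recursion pos}.
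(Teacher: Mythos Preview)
Your proposal is correct and follows essentially the same approach as the paper's proof. The paper likewise extracts the $y$-axis multiplicity $t$, shifts all remaining wall indices down by one to produce $\gamma'$, invokes \autoref{prop: partition formula kron negative am} to obtain $w_q(\gamma)=\binom{h-2b}{t}_{q^4}\,w_q(\gamma')$, and sums over $t$; your observation that only the differences $\ell_j-\ell_i$ appear in the product is exactly the reason the shift preserves the remaining factors.
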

\begin{proof}
The proof is essentially the same as that of \autoref{lem: bl recursion pos}, but adapted for the initial exponent $(-h,-h)$.  Consider the broken line $\gamma \in \BL_-(h,h,a,b)$ that bends over the wall $\frac{\ell-1}{\ell}$ with multiplicity $m_\ell$ for $\ell \geq 0$.  Let $m_{0} = t$.  We then map $\gamma$ to $\gamma' \in |\BL_-(h,h,2(a-t)-b,a-t)|$ that bends over the wall $\frac{\ell-2}{\ell-1}$ with multiplicity $m_\ell$ for $\ell \geq 0$.  By \autoref{prop: partition formula kron negative am}, we can see that $w_q(\gamma) = \binom{h - 2b}{t}_{q^4} w_q(\gamma')$.  The desired equality follows from summing over all possible values of $m_{0} = t$.
\end{proof}

\section{\texorpdfstring{A Bijection for $r$-Kronecker Cluster Monomials}{A Bijection for r-Kronecker Cluster Monomials}}\label{sec: bijection for r-kronecker}

\subsection{Construction of the Bijection}
We now construct a $q$-weighted bijection between positive compatible pairs and broken lines of negative angular momentum associated to quantum cluster monomials in the $r$-Kronecker cluster algebra.  That is, for $\ell \geq rb$,  we construct a map $\phi: \CP(\ell,h,a,b) \to \BL_-(\ell,h,a,b)$ such that, for a broken line $\gamma \in \BL_-(\ell,h,a,b)$, we have
$$\sum_{(S_1,S_2) \in \phi^{-1}(\gamma)} w_q(S_1,S_2) = |\gamma|_q\,.$$

\begin{defn}
A vertical edge $\nu$ is \emph{overshadowing} if the horizontal edge shadow-paired with $\nu$ is in $S_1$.  A vertical edge $\nu$ is \emph{overflowing} if the horizontal edge cascade-paired with $\nu$ is in $S_1$.  
\end{defn}

We say that a vertical edge of $\calP$ is \emph{protruding} if it has fewer than $r$ horizontal edges to its immediate left.  Note that, by the compatibility condition, if a vertical edge is overshadowing or overflowing, then it is protruding.  We denote the protruding edges of $\calP$ by $\nu_{\pro,1},\dots,\nu_{\pro,rh-\ell}$, ordered from bottom to top.

Suppose $(r-1)h \leq \ell \leq rh$.  Let $\ell' = h$ and $h' = rh - \ell$, and consider the paths $\calP = \calP(\ell,h)$ and $\calP' = \calP(\ell',h')$.  Note that the path $\calP'$ is obtained from $\calP$ by replacing the sequence of steps $E^{r-1}N$ with $EN$ and $E^rN$ with $E$.

\begin{defn}\label{defn: tilde theta}
We define a map $\ttheta:\CP(\calP) \to \CP(\calP)$ taking $(S_1,S_2)$ to $(\widetilde S_1,\widetilde S_2)$, where
\begin{itemize}
\item $\widetilde S_1 = \{\eta_i \in \calP': \nu_i \in \calP \cut S_2 \}$, and
\item $\widetilde S_2 = \{\nu_j \in \calP' : \nu_{\pro,j} \text{ is overflowing}\}$
\end{itemize}    
\end{defn}
\begin{rem}
It is not clear from the definition that the resulting set of edges in the image of $\ttheta$ is a compatible pair.  This fact is proven in \autoref{cor: ttheta compatibility}.  Moreover, when restricted to the set $\CP_{\cas}(\ell,h,a,b)$, the map $\ttheta$ yields a compatible pair in $\CP(\ell',h',h-b,a)$.
\end{rem}

Let $(S_1^{(i)},S_2^{(i)}) = \underbrace{\ttheta \circ \ttheta \circ \cdots \circ \ttheta}_{i}(S_1,S_2)$ for $i \geq 0$.  Let $b_i = |S_2^{(i)}|$.  Note that the sequence $b_i$ is weakly decreasing, and we have $b_i = 0$ for sufficiently large $i$. 

\begin{defn}\label{defn: phi}
Let $\calP(\ell,h)$ be a Dyck path corresponding to a cluster monomial.  We define $\phi:\CP(\ell,h,a,b)\to \BL_{-}(\ell,h,a,b)$ to be the map taking a compatible pair $(S_1,S_2)$ to the broken line that crosses the wall of slope $\frac{c_i}{c_{i-1}}$ with multiplicity $b_{i-1}-(rb_i-b_{i+1})$.     
\end{defn}

A potential direction for further work would be extending this bijection to all broken lines that arise in the theta bases of rank-$2$ (quantum) cluster algebras.  This problem was posed in the classical case in \cite[Remark 3.6]{CGM}, as this would give a combinatorial proof that the greedy and theta bases coincide.  In the quantum setting, it is not yet known if these bases coincide, though this was suggested to be true in \cite[Section 1.4]{davison2021strong}. For the quantum Kronecker cluster algebra, we show at the end of \autoref{sec: bijection for kronecker} that the quantum greedy and theta bases indeed coincide.

\subsection{Proof of Compatibility}
We begin by recalling a map $\theta$ that takes a compatible pair to another compatible pair on a different Dyck path.  This map originally appeared in the work of Lee--Li--Zelevinsky and was used in their proof of \autoref{thm: LLZ expansion}.  

\begin{defn}[{\cite[Lemma 3.5]{LLZ}}]\label{def: theta equivalent def}
If $(r-1)h \leq \ell \leq rh$, then the map $\theta:\CP_{\cas}(\ell,h,a,b) \to \CP(\ell',h',h-b,a)$ taking $(S_1,S_2)$ to $(S_1',S_2')$ is defined as follows:
\begin{itemize}
\item $S_1' = \{\eta_i \in \calP' : \nu_{h + 1-i} \text{ is overshadowing}\}$, and
\item $S_2' = \{\nu_j \in \calP' : \nu_j \in \calP \cut S_2\}$.
\end{itemize}
\end{defn}

\begin{rem}
Our definition of the Lee-Li-Zelevinsky map $\theta$ differs from their original formulation.  Moreover, we apply $\theta$ to compatible pairs (rather than sets of horizontal edges).  It is straightforward to show that the two definitions are equivalent (up to conjugating the underlying Dyck path and including the vertical edges). 

\end{rem}

We now prove that any set of edges $(S_1,S_2)$ in the image of $\widetilde \theta$ is indeed a compatible pair.  To do this, we utilize that the map $\theta$ has this property.

\begin{thm}\label{lem: theta composition}
We have $\theta(S_1,S_2) = (\ttheta \circ \lambda)(S_1,S_2)$ for any $(S_1,S_2) \in \CP_{\sh}(\ell,h,a,b)$.  
\end{thm}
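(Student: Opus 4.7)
The plan is to verify the identity $\theta(S_1,S_2) = (\ttheta\circ\lambda)(S_1,S_2)$ componentwise on the target path $\calP'$, exploiting the structural features of each of the three maps. Since $\lambda$ acts as the identity on the second component, the horizontal component $\widetilde S_1$ of $(\ttheta\circ\lambda)(S_1,S_2)$ depends only on $S_2$, and the corresponding component of $\theta(S_1,S_2)$ also depends only on $S_2$. Both record, via the canonical correspondence induced by the reduction $E^{r-1}N\mapsto EN$, $E^rN\mapsto E$, the vertical edges of $\calP\setminus S_2$. Unpacking the two conventions yields this part of the identification with little more than bookkeeping.

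For the remaining component I would rely on a pointwise refinement of \autoref{lem: lambda shadow exchange}: for each protruding $\nu\in S_2$, $\nu$ is overshadowing in $(S_1,S_2)$ if and only if $\nu$ is overflowing in $(\lambda(S_1),S_2)$. I expect to prove this by a direct analysis of the involution $\iota$. A horizontal edge of $S_1$ that is right-shadowed is fixed by $\iota$, and for such an edge the shadow-paired $\nu$ and cascade-paired $\nu$ coincide, so the equivalence is immediate. The remaining edges of $S_1$ are left-shadowed, and $\iota$ restricts to the order-preserving bijection between the left-shadowed and left-filled horizontal edges of $\calP_1$. An induction on the number of left-shadowed edges in $S_1$ (or, equivalently, an examination of the cascade construction step by step) then shows that this bijection matches shadow-paired edges to cascade-paired edges pointwise, which is exactly the content of the refinement.

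The main obstacle I anticipate is bookkeeping: reconciling the $\nu_{h+1-i}$ indexing in \autoref{def: theta equivalent def} (inherited from \cite{LLZ}) with the $\nu_{\pro,j}$ indexing in \autoref{defn: tilde theta}, since the two definitions implicitly label vertical edges of $\calP'$ by protruding edges of $\calP$ using essentially reversed orientation conventions. Once this identification is nailed down, the pointwise equivalence above upgrades \autoref{lem: lambda shadow exchange} from an existence statement (``shadows some edge iff fills some edge'') to the edge-by-edge identification needed here, and combined with the first paragraph this yields the theorem.
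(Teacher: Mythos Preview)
Your proposal is correct and follows essentially the same approach as the paper: both arguments observe that $\lambda$ fixes $S_2$ so the ``complement of $S_2$'' components match automatically, and then reduce the remaining component to the overshadowing-versus-overflowing correspondence of \autoref{lem: lambda shadow exchange}. Your care in noting that a genuinely pointwise (edge-by-edge) form of that lemma is what is actually needed, and that the $\nu_{h+1-i}$ and $\nu_{\pro,j}$ indexing conventions must be reconciled, fills in details that the paper's brief proof leaves implicit.
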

\begin{proof}
    Note that $\lambda$ does not affect $S_2$ and both $\theta$ and $\ttheta$ replace $S_2$ with its complement.  The map $\theta$ depends on which edges of $S_2$ are overshadowing in $(S_1,S_2)$, while the map $\ttheta$ depends analogously on which edges of $S_2$ are overflowing in $\lambda(S_1,S_2)$.  By \autoref{lem: lambda shadow exchange}, the overshadowing edges of $(S_1,S_2)$ coincide with the overflowing edges of $\lambda(S_1,S_2)$.  Therefore, we can conclude that the maps are the same.
\end{proof}

\begin{cor}\label{cor: ttheta compatibility}
For compatible pairs on Dyck paths corresponding to cluster monomials, the map $\ttheta$ preserves compatibility.
\end{cor}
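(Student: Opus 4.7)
The plan is to identify $\ttheta$ with the composition $\theta \circ \lambda^{-1}$ on the appropriate domain and then deduce compatibility preservation by combining Theorem~\ref{lem: theta composition} with the classical compatibility preservation of $\theta$ together with the cluster-monomial compatibility preservation of $\lambda^{-1}$ from Lemma~\ref{lem: lambda inverse compatibility}.

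Concretely, fix a compatible pair $(S_1, S_2)$ on a cluster monomial Dyck path. First I would set $(T_1, T_2) \colonequals \lambda^{-1}(S_1, S_2)$; by Lemma~\ref{lem: lambda inverse compatibility} this is again a compatible pair, and by the way the involution $\iota$ underlying $\lambda$ swaps left-filled edges with left-shadowed edges, this pair lies in $\CP_{\sh}$. Next I would invoke Theorem~\ref{lem: theta composition} at $(T_1, T_2)$ to obtain
\[
    \theta(T_1, T_2) \;=\; (\ttheta \circ \lambda)(T_1, T_2) \;=\; \ttheta(S_1, S_2).
\]
Since $\theta$ is known (by the classical work of Lee--Li--Zelevinsky) to send compatible pairs in $\CP_{\sh}$ to compatible pairs, the right-hand side $\ttheta(S_1, S_2)$ is compatible, as required.

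The main obstacle is the bookkeeping needed to confirm $\lambda^{-1}(S_1, S_2) \in \CP_{\sh}$, so that Theorem~\ref{lem: theta composition} is actually applicable. Concretely, this amounts to checking that the involution $\iota$ maps $\cas(S_2) \cap S_1$ bijectively into $\sh(S_2)$ while leaving right-filled/right-shadowed edges untouched, so that the restriction $\lambda \colon \CP_{\cas} \to \CP_{\sh}$ is a well-defined bijection. If one additionally wants the corollary to cover compatible pairs not contained in $\CP_{\cas}$, one must upgrade the argument by verifying the identity $\ttheta(S_1, S_2) = \theta(\lambda^{-1}(S_1, S_2))$ in this broader setting, using Lemma~\ref{lem: lambda shadow exchange} to match overflowing edges on the $\ttheta$ side with overshadowing edges on the $\theta$ side. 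Once this pairing is pinned down, the proof reduces to the one-line composition argument above.
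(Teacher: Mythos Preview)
Your proposal is correct and follows essentially the same approach as the paper. The paper's proof is the one-line version of what you wrote: it observes that $\ttheta = \theta \circ \lambda^{-1}$ via Theorem~\ref{lem: theta composition}, invokes Lemma~\ref{lem: lambda inverse compatibility} for compatibility preservation of $\lambda^{-1}$, and cites \cite[Lemma 3.5]{LLZ} for compatibility preservation of $\theta$. The bookkeeping obstacle you flag (confirming $\lambda^{-1}(S_1,S_2) \in \CP_{\sh}$ so that Theorem~\ref{lem: theta composition} applies) is left implicit in the paper's argument.
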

\begin{proof}
By \autoref{lem: lambda inverse compatibility}, the map $\lambda^{-1}$ preserves compatibility for the cluster monomials (which includes the setting in which $\ttheta$ is defined).  By \autoref{lem: theta composition}, we have $\ttheta = \theta \circ \lambda^{-1}$.  Since the map $\theta$ preserves compatibility (see \cite[Lemma 3.5]{LLZ}), we can conclude that $\ttheta$ does as well.
\end{proof}

\subsection{Preservation of Quantum Weight}\label{subsec: preserving quantum weight}

We now show that, on a path $\calP(\ell,h)$ corresponding to a cluster monomial, the map $\ttheta$ preserves the quantum weights of compatible pairs.  Thus, the map $\phi: \CP(\ell,h,a,b) \to \BL_-(\ell,h,a,b)$ is a $q$-weighted bijection (see \autoref{thm: phi q weighted bijection}).  Throughout this section, let $(\widetilde S_1,\widetilde S_2) = \ttheta(S_1,S_2)$.  

\begin{prop}\label{prop: gen hor swap}
    Let $\omega \in \{h,v,H,V\}^*$ be the word associated to a compatible pair in $\CP(\ell,h,a,b)$ where $\calP(\ell,h)$ corresponds to a cluster monomial.  If we swap an instance $h$ with an $H$ to its right such that the segment $\sigma$ between them satisfies $|\sigma_h| + |\sigma_H| = r|\sigma_V|$, then the quantum weight increases by $2r$.
\end{prop}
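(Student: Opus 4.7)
The plan is to prove this by a direct pairwise computation, exploiting the additive definition of $w_q$ over all pairs $\sigma_i \sigma_j$ with $i < j$. Let the instance of $h$ be at position $i$ and the $H$ be at position $j > i$, so the segment $\sigma = \sigma_{i+1} \cdots \sigma_{j-1}$ lies strictly between them. Denote the new word, obtained by swapping these two letters, by $\omega'$. The key observation is that when we compute $w_q(\omega') - w_q(\omega)$, the only pairs whose contributions can change are those that involve position $i$ or position $j$.

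First I would handle the pairs with one index outside $[i,j]$. For any position $k < i$, the pairs $(k,i)$ and $(k,j)$ collectively contribute $w_q(\sigma_k h) + w_q(\sigma_k H)$ before the swap and $w_q(\sigma_k H) + w_q(\sigma_k h)$ after, so their sum is unchanged. The analogous cancellation occurs for $k > j$. Thus all the action comes from the pair $(i,j)$ and from pairs $(i,k)$, $(k,j)$ with $i < k < j$. The pair $(i,j)$ itself goes from $w_q(hH) = -r$ to $w_q(Hh) = r$, contributing a change of $+2r$.

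Next, for $i < k < j$, the change in the contribution of the pairs $(i,k)$ and $(k,j)$ is
\[
\bigl[w_q(H\sigma_k) - w_q(h\sigma_k)\bigr] + \bigl[w_q(\sigma_k h) - w_q(\sigma_k H)\bigr] = 2\bigl[w_q(H\sigma_k) - w_q(h\sigma_k)\bigr],
\]
where I used the antisymmetry $w_q(xy) = -w_q(yx)$. A direct inspection of the four cases for $\sigma_k$ gives the per-letter change $2\bigl[w_q(H\sigma_k)-w_q(h\sigma_k)\bigr]$ equal to $2r$ when $\sigma_k \in \{h,H\}$, equal to $0$ when $\sigma_k = v$, and equal to $-2r^2$ when $\sigma_k = V$.

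Summing over $k$ in the segment and adding the contribution from the pair $(i,j)$ yields
\[
w_q(\omega') - w_q(\omega) = 2r + 2r\bigl(|\sigma_h| + |\sigma_H|\bigr) - 2r^2|\sigma_V|.
\]
Applying the hypothesis $|\sigma_h| + |\sigma_H| = r|\sigma_V|$ collapses the right-hand side to $2r$, which is exactly the claim. I do not expect any genuine obstacle here: the statement is a purely local combinatorial identity, and the only mild subtlety is bookkeeping signs correctly via the antisymmetry of $w_q$ on length-$2$ words; the hypothesis on $\sigma$ is chosen precisely so that the otherwise nontrivial linear combination in the $v/V$ and $h/H$ counts telescopes to a constant.
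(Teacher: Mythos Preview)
Your proof is correct and takes essentially the same approach as the paper: both observe that contributions from pairs with an index outside $[i,j]$ cancel, then compute the remaining contributions from the pair $(i,j)$ and from pairs involving one of $i,j$ and a letter of $\sigma$. Your organization is marginally more streamlined---you group by the letter $\sigma_k$ and invoke antisymmetry to reduce to $2[w_q(H\sigma_k)-w_q(h\sigma_k)]$---whereas the paper computes $w_q(H\sigma h)$ and $w_q(h\sigma H)$ separately and then takes the difference, but the underlying calculation is identical.
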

\begin{proof}
    Note that the above expression does not take into account $|\sigma_v|$. Moreover, the quantum weight contributed from portions of $\omega$ outside the segment $h\sigma H$ (or $H\sigma h$) are unaffected.  We can then calculate that 
    \begin{align*}
        w_q(H\sigma h) &= w_q(Hh) + w_q(|\sigma_H|HH + |\sigma_V|HV +  |\sigma_h|Hh + |\sigma_v|Hv)\\ 
        &\hspace{1.8cm} + w_q(|\sigma_H|Hh + |\sigma_V|Vh + |\sigma_h|hh + |\sigma_v|vh)\\
        &= r + |\sigma_V|(1-r^2) + |\sigma_h|r + |\sigma_v| + |\sigma_H|r - |\sigma_V| - |\sigma_v|\\
        &= r + |\sigma_V|(1-r^2) +  (r|\sigma_V| - |\sigma_H|)r + |\sigma_H|r - |\sigma_V|\\
        &= r
    \end{align*}
    and 
    \begin{align*}
        w_q(h\sigma H) &= w_q(hH) + w_q(|\sigma_H|hH + |\sigma_V|hV +  |\sigma_h|hh + |\sigma_v|hv)\\ 
        &\hspace{1.8cm} + w_q(|\sigma_H|HH + |\sigma_V|VH + |\sigma_h|hH + |\sigma_v|vH)\\
        &= -r -|\sigma_H|r + |\sigma_V| + |\sigma_v| + |\sigma_V|(r^2 -1) - |\sigma_h|r - |\sigma_v|\\
        &= -r -(r|\sigma_V| - |\sigma_h|)r + |\sigma_V| + |\sigma_V|(r^2 - 1) - |\sigma_h|r\\
        &= -r\,.
    \end{align*}
The resulting change in the quantum weight is then $w_q(H\sigma h - h\sigma H) = 2r$, as desired.
\end{proof}

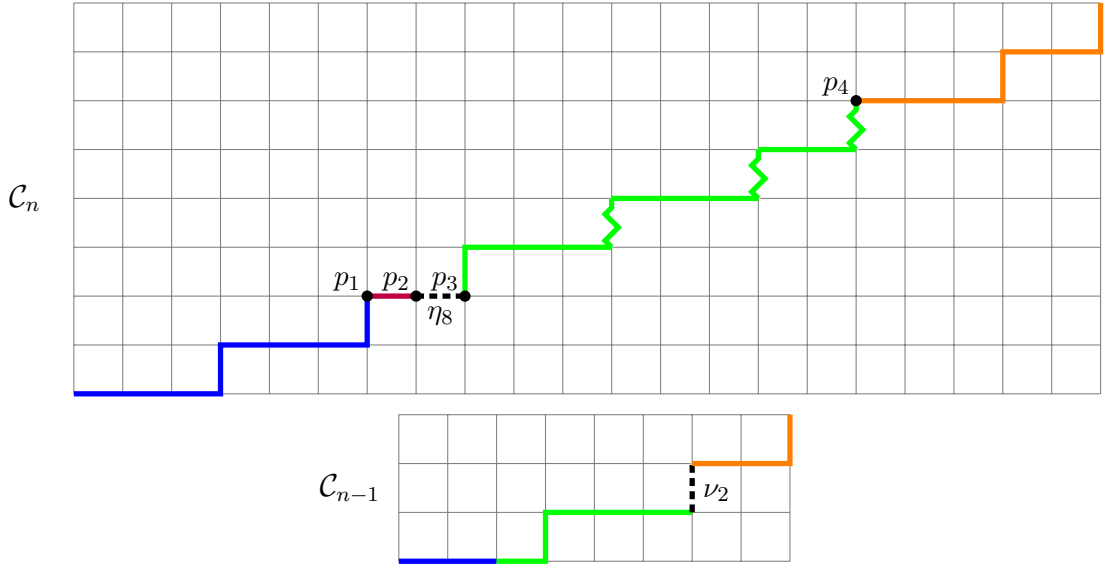
\begin{figure}
\[
\begin{tikzpicture}[scale=.65]
\def\v{0.15cm}
\draw[step=1,color=gray] (0,0) grid (21,8);
\draw[color = white, line width=2pt] (16,5)--(16,6);
\draw[color = white, line width=2pt] (14,4)--(14,5);
\draw[color = white, line width=2pt] (11,3)--(11,4);
\draw[color = white, line width=2pt] (7,2)--(8,2);
\draw[decorate,decoration=zigzag, color = green, line width=2pt] (16,5)--(16,6);
\draw[decorate,decoration=zigzag, color = green, line width=2pt] (14,4)--(14,5);
\draw[decorate,decoration=zigzag, color = green, line width=2pt] (11,3)--(11,4);
\draw[line width=2,color=blue] (0,0)--(3,0)--(3,1)--(6,1)--(6,2);
\draw[line width=2,color=purple] (6,2)  node[circle,fill=black,minimum size=\v,inner sep=0pt]{}--(7,2);
\draw[line width=2,color=black, densely dashed] (7,2) node[circle,fill=black,minimum size=\v,inner sep=0pt]{}--(8,2);
\draw[line width=2,color=green] (8,2) node[circle,fill=black,minimum size=\v,inner sep=0pt]{}--(8,3)--(11,3);
\draw[line width=2,color=green] (11,4)--(14,4);
\draw[line width=2,color=green] (14,5)--(16,5);
\draw[line width=2,color=orange] (16,6) node[circle,fill=black,minimum size=\v,inner sep=0pt]{}--(19,6)--(19,7)--(21,7)--(21,8);
\draw (5.6,2.7) node[anchor=north,color=black]  {\small$p_1$};
\draw (6.6,2.7) node[anchor=north,color=black]  {\small$p_2$};
\draw (7.6,2.7) node[anchor=north,color=black]  {\small$p_3$};
\draw (15.6,6.7) node[anchor=north,color=black]  {\small$p_4$};
\draw (7.5,2) node[anchor=north,color=black]  {\small$\eta_8$};
\draw (-1,4.5) node[anchor=north,color=black]  {$\calC_n$};
\end{tikzpicture}
\]
\[
\begin{tikzpicture}[scale=.65]
\def\v{0.15cm}
\draw[step=1,color=gray] (0,0) grid (8,3);
\draw[color = white, line width=2pt] (6,1)--(6,2);
\draw[line width=2,color=blue] (0,0)--(2,0);
\draw[line width=2,color=green] (2,0)--(3,0)--(3,1)--(6,1);
\draw[line width=2,color=black, densely dashed] (6,1)--(6,2);
\draw[line width=2,color=orange] (6,2)--(8,2)--(8,3);
\draw (-1,2) node[anchor=north,color=black]  {$\calC_{n-1}$};
\draw (6.5,1.8) node[anchor=north,color=black]  {\small$\nu_2$};
\end{tikzpicture}
\]
\caption{An illustration of the construction in \autoref{lem: add shadowed}.}
\end{figure}

Let $L$ be a set of letters.  Let $|W|_L$ denote the number of entries of $W$ that are in $L$ (with multiplicity).  For integers $s_i$ and words $W_i$, let $\left|\sum_{1 \leq i \leq k} s_iW_i\right|_L = \sum_{1 \leq i \leq k} s_i|W_i|_L$.

\begin{lem}\label{lem: add shadowed}
    Fix a positive compatible pair $(S_1,S_2)$ on $\calC_n$. Suppose that the horizontal edge $\eta_i \notin S_1$ is cascade-paired with $\nu_j \in S_2$ and that $\eta_i$ is to the left of $\nu_j$. Then 
    $$w_q(S_1 \cup \{\eta_i\},S_2) - w_q(S_1,S_2) = w_q(\widetilde S_1, \widetilde S_2 \cup \{\nu_{j'}\}) - w_q(\widetilde S_1,\widetilde S_2)\,.$$
\end{lem}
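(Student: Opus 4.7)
The plan is to compute both sides directly as changes in the quantum weight of the associated words, and then to use the explicit description of $\ttheta$ together with the cascade-pairing hypothesis to match the two changes.

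Let $\omega \in A^*$ be the word for $(S_1, S_2)$ and $\widetilde\omega$ the word for $\ttheta(S_1, S_2) = (\widetilde S_1, \widetilde S_2)$. Since $\widetilde S_1$ and the cascade of $S_2$ depend only on $S_2$, adding $\eta_i$ to $S_1$ only changes the letter at position $i$ in $\omega$ from $h$ to $H$, and it makes $\nu_j$ overflowing so that $\widetilde S_2$ gains exactly the edge $\nu_{j'}$ (where $\nu_{\mathrm{pro},j'} = \nu_j$), replacing a single $v$ in $\widetilde\omega$ by a $V$. First I would compute each side explicitly from the definition of $w_q$ using the table of two-letter values and the antisymmetry $w_q(xy) = -w_q(yx)$. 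Writing $L^\omega_X$ and $R^\omega_X$ for the count of letters of type $X$ strictly left and right of position $i$ in $\omega$, and analogously for $\widetilde\omega$ at $j'$, a routine enumeration of the four cases on each side yields
\[
\Delta_{\mathrm{LHS}} = r\bigl(R^\omega_{\mathrm{hor}} - L^\omega_{\mathrm{hor}}\bigr) + r^2\bigl(L^\omega_V - R^\omega_V\bigr),
\]
\[
\Delta_{\mathrm{RHS}} = r^2\bigl(R^{\widetilde\omega}_H - L^{\widetilde\omega}_H\bigr) + r\bigl(L^{\widetilde\omega}_{\mathrm{vert}} - R^{\widetilde\omega}_{\mathrm{vert}}\bigr),
\]
where $L^\omega_{\mathrm{hor}} = L^\omega_h + L^\omega_H$, $L^{\widetilde\omega}_{\mathrm{vert}} = L^{\widetilde\omega}_v + L^{\widetilde\omega}_V$, and similarly for the $R$-terms.

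Second, I would translate the counts in $\Delta_{\mathrm{RHS}}$ into counts in $\omega$ using the bijections furnished by $\ttheta$. The horizontal letters of $\widetilde\omega$ are in bijection with the vertical letters of $\omega$ (with $H$ corresponding to $v$ and $h$ to $V$), while the vertical letters of $\widetilde\omega$ are in bijection with the protruding vertical letters of $\omega$. Moreover, letters strictly before (resp.\ after) position $j'$ in $\widetilde\omega$ correspond to edges of $\calP$ strictly below (resp.\ above) $\nu_j$. Together these identifications rewrite $R^{\widetilde\omega}_H - L^{\widetilde\omega}_H$ and $L^{\widetilde\omega}_{\mathrm{vert}} - R^{\widetilde\omega}_{\mathrm{vert}}$ as differences of counts in $\omega$ taken relative to $\nu_j$.

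Finally, I would invoke the cascade-pairing hypothesis to bridge counts relative to $\eta_i$ and counts relative to $\nu_j$. Positivity together with \autoref{prop: r cascade subset} guarantees the cascade is well-behaved, and since $\eta_i$ is the horizontal edge $r$-cascade-paired with $\nu_j$ lying to its left, the cascade labeling pins down exactly how many horizontal and vertical edges of $\calP$ sit in the subpath $\overrightarrow{\eta_i\nu_j}$. Substituting this arithmetic relation into the rewritten $\Delta_{\mathrm{RHS}}$ should produce $\Delta_{\mathrm{LHS}} = \Delta_{\mathrm{RHS}}$. The main obstacle will be precisely this last step: the bijection between edges of $\calP$ and $\calP'$ shuffles the linear order of letters (every vertical edge of $\omega$ becomes a horizontal of $\widetilde\omega$, but only the protruding ones additionally contribute a vertical of $\widetilde\omega$), so one must carefully track how ``left/right of $i$ in $\omega$'' translates to ``left/right of $j'$ in $\widetilde\omega$.'' The cascade identity between $\eta_i$ and $\nu_j$ is exactly what produces the matching of the $r$ and $r^2$ coefficients on the two sides.
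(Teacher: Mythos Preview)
Your proposal is correct and follows essentially the same approach as the paper. Your left/right letter-count formulation is equivalent to the paper's decomposition of $\calC_n$ and $\calC_{n-1}$ into subpaths $X,Y,S,Z$ and $\widetilde X,\widetilde S,\widetilde Z$; both produce the same pair of expressions $\Delta_{\mathrm{LHS}}=r(\cdot)+r^2(\cdot)$ and $\Delta_{\mathrm{RHS}}=r^2(\cdot)+r(\cdot)$ and then use the relation between edges of $\calP$ and $\calP'$ to compare them. The paper carries out the ``main obstacle'' you flag explicitly: after translating the $\calP'$ counts back to $\calP$, the difference collapses to a multiple of $r|S|_V - |S|_{h,H} - 1$, which vanishes precisely because $\eta_i$ is cascade-paired with $\nu_j$.
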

\begin{proof}
We begin by decomposing $\calC_n$ into several paths.
Let $p_1$ be the leftmost point along $\calC_n$ at the same height at $\eta_i$.  Let $p_2$ be the left endpoint of $\eta_i$ and let $p_3$ be the top endpoint of $\nu_j$.    Let $X$ be subpath of $\calC_n$ consisting of all edges below $p_1$, and let $Y$ be the path from $p_1$ to $p_2$. Let $S$ be the path from $p_2$ to $p_3$, and let $Z$ be the subpath of $\calC_n$ consisting of all edges to the right of $p_3$.

We similarly decompose $\calC_{n-1}$ into several paths.  Let $\widetilde X$ be the image of $X$, $\widetilde S$ be the image of $Y \cup \{\eta_i\} \cup S$ with $\nu_{j'}$ removed, and $\widetilde Z$ be the image of $Z$.

Expanding out the pairs of edges that involve $\eta_i$ in $\calC_n$, we have
\begin{align*}
w_q(S_1 \cup \{\eta_i\},S_2) - w_q(S_1,S_2) &= w_q(XYHSZ - XYhSZ) \\
&= r|S + Z - X - Y|_{h,H} + r^2|X+Y-S-Z|_V 
\end{align*}
and 
$$w_q(\widetilde S_1, \widetilde S_2 \cup \{\eta_j\}) - w_q(\widetilde S_1,\widetilde S_2) = r|\widetilde X + \widetilde S - \widetilde Z|_{v,V} + r^2|\widetilde Z - \widetilde X - \widetilde S|_H\,.$$
Thus, we want to show that the quantity 
\begin{align*}
\alpha &\colonequals r\left(|S + Z - X - Y|_{h,H} + |\widetilde Z - \widetilde X - \widetilde S|_{v,V}\right)\\
&\qquad+ r^2\left(|X+Y-S-Z|_{V} + |\widetilde X + \widetilde S - \widetilde Z|_H\right)
\end{align*}
vanishes.  
By the definition of the map $\rho$, we have 
$$r|X|_{v,V} - |X|_{h,H} = |\widetilde X|_{v,V} \text{ and } r|Z|_{v,V} - |Z|_{h,H} = |\widetilde Z|_{v,V}\,.$$
Hence 
$$r^2|X-Z|_V + r|Z-X|_{h,H} + r|\widetilde Z-\widetilde X|_{v,V} = r^2|Z-X|_v\,.$$
Plugging this into our expression for $\alpha$, we find 
\begin{align*}
\alpha &= r\left(|S - Y|_{h,H} - | \widetilde S|_{v,V}\right) + r^2\left(|Y-S|_{V} + |\widetilde X + \widetilde S - \widetilde Z|_H + |Z-X|_v \right)\,.
\end{align*}

By the definition of the map $\rho$, we have $|W|_V = |\widetilde W|_{h}$ and $|W|_v = |\widetilde W|_{H}$ for $W = X,S,Z$.  We also must have $|Y|_{v,V} = 0$.  Thus, we can further simplify $\alpha$ to 
\begin{align*}
\alpha &= r\left(|S - Y|_{h,H} - | \widetilde S|_{v,V}\right) + r^2\left(|S|_v -|S|_{V} \right)\,.
\end{align*}
We furthermore have 
$$r|S|_{v,V} - |Y + S|_{h,H} - 1 =|\widetilde S|_{v,V} + 1\,,$$
hence we have
$$\alpha =  2r|S|_{h,H} - 2r^2|S|_V + 2r = -2r(r|S|_V - |S|_{h,H} - 1)\,.$$
Since the shadow of $\nu_j$ extends to $\eta_i$, we have $r|S|_V - |S|_{h,H} - 1 = 0$.  We can therefore conclude that $\alpha = 0$, as desired.
\end{proof}

\begin{figure}
\[
\begin{tikzpicture}[scale=.65]
\def\v{0.15cm}
\draw[step=1,color=gray] (0,0) grid (21,8);
\draw[line width=2,color=green] (0,0)--(3,0)--(3,1)--(6,1)--(6,2)--(8,2)--(8,3)--(11,3)--(11,4)--(14,4)--(14,5)--(16,5)--(16,6);
\draw[line width=2,color=green] (11,4)--(14,4);
\draw[line width=2,color=green] (14,5)--(16,5);
\draw[line width=2,color=orange] (18,6) node[circle,fill=black,minimum size=\v,inner sep=0pt]{}--(19,6)--(19,7)--(21,7)--(21,8);
\draw[color = white, line width=3pt] (16,5)--(16,6);
\draw[color = white, line width=3pt] (14,4)--(14,5);
\draw[color = white, line width=3pt] (11,3)--(11,4);
\draw[color = white, line width=3pt] (16,5)--(16,6);
\draw[color = white, line width=3pt] (3,0)--(3,1);
\draw[color = white, line width=3pt] (6,1)--(6,2);
\draw[color = white, line width=3pt] (8,2)--(8,3);
\draw[decorate,decoration=zigzag, color = green, line width=2pt] (16,5)--(16,6);
\draw[decorate,decoration=zigzag, color = green, line width=2pt] (14,4)--(14,5);
\draw[decorate,decoration=zigzag, color = green, line width=2pt] (11,3)--(11,4);
\draw[decorate,decoration=zigzag, color = green, line width=2pt] (3,0)--(3,1);
\draw[decorate,decoration=zigzag, color = green, line width=2pt] (6,1)--(6,2);
\draw[decorate,decoration=zigzag, color = green, line width=2pt] (8,2)--(8,3);
\draw[line width=2,color=purple] (17,6)  node[circle,fill=black,minimum size=\v,inner sep=0pt]{}--(18,6);
\draw[line width=2,color=black, densely dashed] (16,6) node[circle,fill=black,minimum size=\v,inner sep=0pt]{}--(17,6);
\draw (15.6,6.7) node[anchor=north,color=black]  {\small$p_1$};
\draw (16.6,6.7) node[anchor=north,color=black]  {\small$p_2$};
\draw (17.6,6.7) node[anchor=north,color=black]  {\small$p_3$};
\draw (16.5,6) node[anchor=north,color=black]  {\small$\eta_{18}$};
\draw (-1,4.5) node[anchor=north,color=black]  {$\calC_n$};
\end{tikzpicture}
\]
\[
\begin{tikzpicture}[scale=.65]
\def\v{0.15cm}
\draw[step=1,color=gray] (0,0) grid (8,3);
\draw[color = white, line width=2pt] (6,1)--(6,2);
\draw[line width=2,color=green] (0,0)--(2,0)--(3,0)--(3,1)--(6,1);
\draw[line width=2,color=black, densely dashed] (6,1)--(6,2);
\draw[line width=2,color=orange] (6,2)--(8,2)--(8,3);
\draw (-1,2) node[anchor=north,color=black]  {$\calC_{n-1}$};
\draw (6.5,1.8) node[anchor=north,color=black]  {\small$\nu_2$};
\end{tikzpicture}
\]
\caption{An illustration of the construction in \autoref{lem: add shadowed left-aligned}.}
\end{figure}
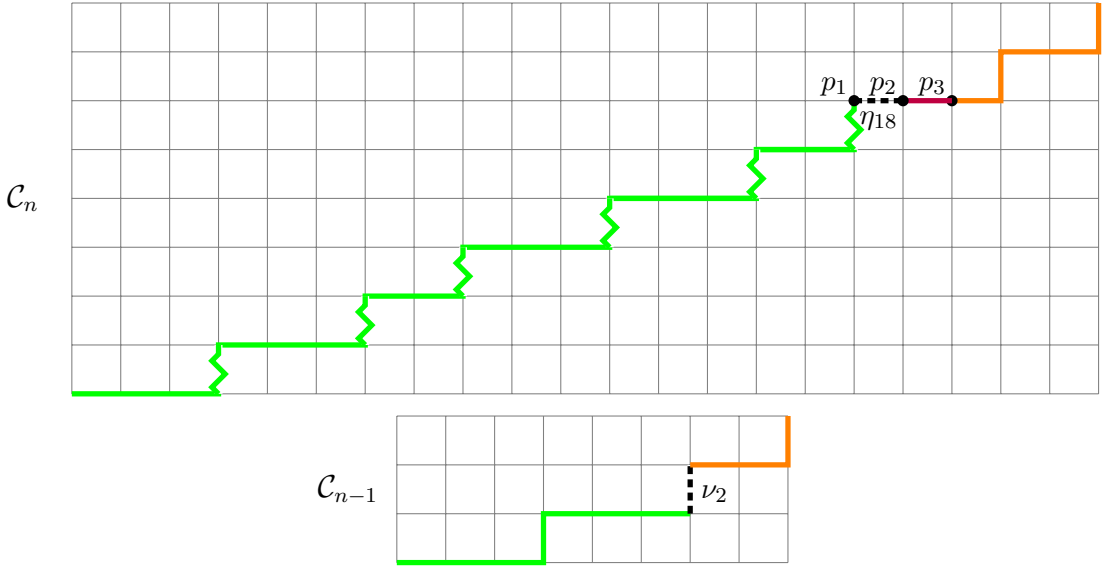

\begin{lem}\label{lem: add shadowed left-aligned}
    Fix a positive compatible pair $(S_1,S_2)$ on $\calC_n$. Suppose that the horizontal edge $\eta_i \notin S_1$ is cascade-paired with $\nu_j \in S_2$ and that $\eta_i$ is to the right of $\nu_j$. Then 
    $$w_q(S_1 \cup \{\eta_i\},S_2) - w_q(S_1,S_2) = w_q(\widetilde S_1, \widetilde S_2 \cup \{\nu_{j'}\}) - w_q(\widetilde S_1,\widetilde S_2)$$
\end{lem}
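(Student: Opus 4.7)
The approach will mirror that of the immediately preceding lemma, with the decomposition of $\calC_n$ adjusted for the right-aligned configuration. I first decompose $\calC_n$ into three subpaths: $X$ consists of all edges strictly preceding $\nu_j$, $S$ is the subpath from the top endpoint of $\nu_j$ to the left endpoint of $\eta_i$, and $Z$ consists of all edges strictly following $\eta_i$. Writing the associated word as $X\,V\,S\,h\,Z$ (where the single letter $V$ marks $\nu_j$ and the $h$ marks $\eta_i$), I expand the pairwise weights involving $\eta_i$ to obtain
\begin{equation*}
w_q(S_1 \cup \{\eta_i\}, S_2) - w_q(S_1, S_2) = r\lvert Z - X - V - S\rvert_{h,H} + r^2\lvert X + V + S - Z\rvert_V\,.
\end{equation*}

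For the image on $\calC_{n-1}$, I form the subwords $\widetilde X, \widetilde S, \widetilde Z$ of $X, S, Z$ under the map $\rho$ underlying $\ttheta$, and carry out the analogous expansion
\begin{equation*}
w_q(\widetilde S_1, \widetilde S_2 \cup \{\nu_{j'}\}) - w_q(\widetilde S_1, \widetilde S_2) = r\lvert \widetilde X + \widetilde S - \widetilde Z\rvert_{v,V} + r^2\lvert \widetilde Z - \widetilde X - \widetilde S\rvert_H\,.
\end{equation*}
I then substitute the identities $|W|_V = |\widetilde W|_h$, $|W|_v = |\widetilde W|_H$, and $r|W|_{v,V} - |W|_{h,H} = |\widetilde W|_{v,V}$ for $W \in \{X, S, Z\}$, which are the defining properties of $\rho$, to rewrite the two right-hand sides on a common footing. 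The difference of the two expressions reduces to a scalar multiple of a single cascade-pairing constraint on $S$.

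The required constraint is the right-aligned analogue of the identity $r|S|_V - |S|_{h,H} = 1$ used at the end of the preceding proof: since $\eta_i$ is the rightward overflow target of $\nu_j$ in the cascade algorithm, the cascade labels of $\nu_j$ together with those of any intervening vertical edges in $S$ must exactly exhaust the horizontal edges of $S$, yielding the desired scalar relation. Substituting this identity equates the two weight differences, completing the argument.

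I anticipate the main obstacle to lie in pinning down the exact form of the right-aligned cascade identity and in tracking the fact that $\nu_j$ now lies in the interior of the decomposition (between $X$ and $S$) rather than at the right boundary of $S$ as in the left-aligned case; once this identity is established, the algebraic bookkeeping proceeds in direct analogy with the proof of the preceding lemma.
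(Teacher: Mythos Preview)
Your overall plan---expand the two weight differences and reconcile them via the $\rho$-identities---matches the paper's, but two choices you make diverge from the paper's proof in ways that matter.

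First, the decomposition. The paper does \emph{not} single out $\nu_j$ as you do; instead it takes $S$ to be everything preceding $\eta_i$ (so $\nu_j$ is buried inside $S$), places $\eta_i$ next, then introduces an auxiliary segment $Y$ consisting of the remaining horizontal edges at the same height as $\eta_i$, and finally $Z$. The point of this choice is that the cuts fall on block boundaries of the map $\rho$ (each block being $E^{r-1}N\to EN$ or $E^rN\to E$). Your cut just before $\nu_j$ leaves $X$ ending with the $(r-1)$ loose horizontal edges of an incomplete block, so the identity $r|W|_{v,V}-|W|_{h,H}=|\widetilde W|_{v,V}$ you invoke already fails for $W=X$ by exactly $-(r-1)$; similar corrections infect $S$ and $Z$ depending on where $\eta_i$ sits in its own block. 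The paper's versions of these identities instead read $r|S|_{v,V}-|S|_{h,H}=|\widetilde S|_{v,V}+1$ and $r|Z|_{v,V}-|Y+Z|_{h,H}-1=|\widetilde Z|_{v,V}$, with the $\pm1$'s coming from removing $\nu_{j'}$ and from the extra horizontal edge $\eta_i$ respectively.

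Second, the endgame. You anticipate that the calculation will bottom out in a ``right-aligned cascade identity'' analogous to $r|S|_V-|S|_{h,H}=1$ from the preceding lemma, and you flag this as the main obstacle. In the paper's organization that identity is \emph{not needed at all}: after substituting the two $\rho$-identities above, the residual expression is $r^2\bigl(|Z-S|_v+|\widetilde S-\widetilde Z|_H\bigr)$, which vanishes immediately from $|W|_v=|\widetilde W|_H$ for $W\in\{S,Z\}$. So the obstacle you expect simply does not arise with the paper's decomposition, which is the real reason it chose the $(S,\eta_i,Y,Z)$ partition rather than yours.
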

\begin{proof}
We proceed via analogous methods to the proof of \autoref{lem: add shadowed}.  We start by decomposing $\calC_n$ into several paths.  Let $p_1$ be the left endpoint of $\eta_i$ and let $p_2$ be the right endpoint of $\eta_i$.  Let $p_3$ be the rightmost point along $\calC_n$ at the same height as $\eta_i$. Let $S$ be subpath of $\calC_n$ consisting of all edges below $p_1$, and let $Y$ be the path from $p_1$ to $p_2$. Let $S$ be the path from $p_2$ to $p_3$, and let $Z$ be the subpath of $\calC_n$ consisting of all edges to the right of or above $p_3$.  Note that $\sh(\nu_j;S_2) = S \cup Y \cup \{\eta_i\}$.

We similarly decompose $\calC_{n-1}$ into several paths.  Let $\widetilde S$ be the image of $S$ with $\nu_{j'}$ removed, and let $\widetilde Z$ be the image of $\{\eta_i\} \cup Y \cup Z$.

Expanding out the pairs of edges that involve $\eta_i$ in $\calC_n$, we have
\begin{align*}
w_q(S_1 \cup \{\eta_i\},S_2) - w_q(S_1,S_2) &= w_q(SYHZ - SYhZ) \\
&= r|Y+Z - S|_{h,H} + r^2|S - Y- Z|_V\\
&= r|Y+Z - S|_{h,H} + r^2|S - Z|_V 
\end{align*}
and 
$$w_q(\widetilde S_1, \widetilde S_2\cup \{\eta_j\}) - w_q(\widetilde S_1,\widetilde S_2) = r|\widetilde S - \widetilde Z|_{v,V} + r^2|\widetilde Z - \widetilde S|_H\,.$$
Thus, we want to show that the quantity 
\begin{align*}
\alpha &\colonequals r\left(|Y+Z - S|_{h,H} + |\widetilde Z - \widetilde S|_{v,V}\right) + r^2\left(|S- Z|_V+ |\widetilde S - \widetilde Z|_H\right)
\end{align*}
vanishes.  

By the definition of the map $\rho$, we have 
$$r|S|_{v,V} - |S|_{h,H} = |\widetilde S|_{v,V} + 1$$
and 
$$r|Z|_{v,V} - |Y+Z|_{h,H} - 1 = |\widetilde Z|_{v,V}\,.$$
Substituting these into our expression for $\alpha$, we have
\begin{align*}
\alpha &= r\left(|Y+Z|_{h,H} + |\widetilde Z|_{v,V}\right) + r^2\left(-|Z|_V+ |\widetilde S - \widetilde Z|_H - |S|_v\right) + r\\
&= r^2\left(|Z-S|_v + |\widetilde S - \widetilde Z|_H\right)\,.
\end{align*}

We furthermore have that $|S|_v = |\widetilde S|_H$ and $|Z|_v = |\widetilde Z|_H$.  We can thus conclude that $\alpha = 0$, as desired.
\end{proof}

\begin{lem}\label{lem: flip unshadowed}
For any compatible pair $(S_1,S_2)$ with $S_1 \subseteq \cas(S_2)$, we have 
$$w_q(S_1 \cup (\calP_1 \cut \sh(S_2)),S_2) = w_q(S_1,S_2)\,.$$
\end{lem}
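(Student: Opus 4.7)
The plan is to track how $w_q$ changes when each unshadowed horizontal edge is added to $S_1$ (flipped from $h$ to $H$ in the associated word) and show the total change vanishes. Since $w_q(hh)=w_q(HH)=0$, pairs of simultaneously-flipped edges contribute no interaction terms, so the total weight change decomposes as $\sum_{\eta_i\in U\cut S_1}\Delta_i$, where $U := \calP_1\cut\sh(S_2)$ and $\Delta_i$ is the contribution from flipping $\eta_i$ alone. Expanding $w_q(\omega_j H)-w_q(\omega_j h)$ and $w_q(H\omega_j)-w_q(h\omega_j)$ case-by-case using the defining values of $w_q$ gives $\Delta_i = r(R_h(i)-L_h(i)) - r^2(R_V(i)-L_V(i))$, where $L_h(i), R_h(i)$ (resp.\ $L_V(i), R_V(i)$) count horizontal edges (resp.\ edges of $S_2$) strictly to the left/right of $\eta_i$. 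Crucially, $\Delta_i$ depends only on the position of $\eta_i$ and on $S_2$, not on $S_1$.

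The hypothesis $S_1\subseteq\cas(S_2)$ together with the cluster-monomial context of this section forces $S_1\cap U=\emptyset$ (since $\cas(S_2)$ and $\sh(S_2)$ coincide as subsets of $\calP_1$, via the correspondence encoded by the involution $\iota$ of \autoref{defn: lambda} and the cardinality identity $|\cas(S_2)|=|\sh(S_2)|=r|S_2|$). Thus $U\cut S_1=U$, and we must prove $\sum_{\eta_i\in U}\Delta_i=0$. Swapping the order of summation --- with pairs $(\eta_i,\eta_j)$ having both indices in $U$ cancelling --- we obtain $\sum_{\eta_i\in U}\Delta_i = r(A_+-A_-) - r^2(B_+-B_-)$, where $A_\pm$ counts ordered pairs $(\eta_i,\eta_s)\in U\times\sh(S_2)$ with $\eta_i$ to the left/right of $\eta_s$ and $B_\pm$ is defined analogously with $S_2$. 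Combining with $A_++A_- = |U|\cdot r|S_2| = r(B_++B_-)$ (which uses $|\sh(S_2)|=r|S_2|$), the vanishing of the sum reduces to the pointwise identity
\[|\sh(S_2)\cap\{\text{right of }\eta_i\}| = r\cdot|S_2\cap\{\text{right of }\eta_i\}|\quad\text{for each }\eta_i\in U.\]

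To establish this pointwise identity, observe that since $\eta_i\in U$ is unshadowed, no shadow subpath $\vec{\eta\nu_j}$ can contain $\eta_i$; thus for any $\nu_j\in S_2$ to the right of $\eta_i$, the shadow $\sh(\nu_j)$ lies entirely to the right of $\eta_i$, while for $\nu_j$ to the left, $\sh(\nu_j)$ lies entirely to the left. Hence $\sh(S_2)\cap\{\text{right of }\eta_i\}$ equals the union of shadows of $S_2^R := \{\nu\in S_2 : \nu\text{ right of }\eta_i\}$, and each such shadow agrees with the shadow computed within the right sub-path of $\calP$ using $S_2^R$. Applying the shadow cardinality formula $|\sh(S')|=r|S'|$ to this sub-configuration yields exactly $r|S_2^R|$, completing the identity.

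The main technical obstacle will be the final step: justifying that the shadow cardinality formula applies to the right sub-path of $\calP$, which is not itself a maximal Dyck path in general. This relies on the structural properties of cluster monomial Dyck paths to ensure the inequality $r|S_2^R|\leq\ell-i$ holds for every $\eta_i\in U$, so that the shadows on the sub-path have the expected cardinality.
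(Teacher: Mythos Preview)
Your argument rests on the claim that $\cas(S_2)$ and $\sh(S_2)$ coincide as subsets of $\calP_1$, but this is false. The involution $\iota$ you cite is precisely a \emph{nontrivial} bijection between left-filled and left-shadowed edges --- it exists because these sets can differ. Take the configuration in the paper's own illustration of $\lambda$: with $r=3$, $\calP = \calP(21,8)$, and $S_2 = \{\nu_1,\dots,\nu_6\}$, one finds $\cas(S_2) = \{\eta_1,\dots,\eta_{18}\}$ while $\sh(S_2) = \{\eta_1,\dots,\eta_{16},\eta_{20},\eta_{21}\}$, since the local shadow of $\nu_6$ wraps cyclically around the path. Hence $U = \{\eta_{17},\eta_{18},\eta_{19}\}$ meets $\cas(S_2)$, so the hypothesis $S_1 \subseteq \cas(S_2)$ does not force $S_1 \cap U = \emptyset$. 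Your pointwise identity fails here as well: $\eta_{17} \in U$ has two shadowed edges to its right ($\eta_{20},\eta_{21}$) but no $S_2$-edge to its right, so $2 \neq r\cdot 0$. The step ``no shadow subpath contains $\eta_i$, hence each $\sh(\nu_j)$ lies entirely on one side of $\eta_i$'' breaks exactly because local shadows can wrap cyclically past an unshadowed edge.

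Indeed, taking $S_1=\emptyset$ in this example, a direct computation gives $w_q(U,S_2) - w_q(\emptyset,S_2) = 36 \neq 0$, so the lemma as literally written with $\sh$ appears to be misstated; its use downstream and the shape of the paper's own argument both point to $\cas(S_2)$ being the intended set. The paper's proof is a local block argument rather than a global count: one partitions the non-flipped edges into singletons $v$ and blocks consisting of one $V$ together with $r$ horizontal edges, and verifies the two-letter identity that flipping an $h$ leaves the total interaction with each such block unchanged. This requires each block to lie entirely on one side of every flipped edge --- a property satisfied by the cascade grouping (via the non-crossing arcs of the cascade construction) but not by the shadow grouping, as the example shows. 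Your global reduction to a pointwise identity is a reasonable alternative route and would go through if you replace $\sh$ by $\cas$ throughout; the non-crossing property of cascades then yields the identity directly, with none of the sub-path technicalities you flag at the end.
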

\begin{proof}
This follows because, for any $m \in \Z_{\geq 0}$, we have
$$w_q(Vh + mhh + (r-m)Hh) = w_q(VH + mhH + (r-m)HH)$$
and $w_q(hv) = w_q(Hv)$.
\end{proof}

\begin{lem}\label{lem: ttheta quantum preservation}
If $rb \leq \ell$ and the path $\calP(\ell,h)$ corresponds to a cluster monomial, then the restriction of the $\ttheta$ to the set $\CP_{\cas}(\ell,h,a,b)$ preserves the quantum weight of compatible pairs.
\end{lem}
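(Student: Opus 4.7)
The plan is to induct on $|S_1|$, using \autoref{lem: add shadowed} and \autoref{lem: add shadowed left-aligned} to control the quantum weight change when a single cascade-paired edge is removed from $S_1$, and to handle the base case $S_1 = \emptyset$ by a secondary induction on $|S_2|$.

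For the inductive step, I would fix any $\eta_i \in S_1$. By \autoref{prop: r cascade subset}, $S_1 \subseteq \cas_r(S_2)$, so $\eta_i$ is cascade-paired with a unique $\nu_j \in S_2$. Set $S_1' := S_1 \setminus \{\eta_i\}$. Then $\nu_j$ is overflowing in $(S_1, S_2)$ but not in $(S_1', S_2)$; since the horizontal component of $\ttheta$ depends only on $S_2$, the compatible pairs $\ttheta(S_1, S_2)$ and $\ttheta(S_1', S_2)$ have the same first component, while the former's second component contains one extra vertical edge $\nu_{j'} \in \calP'$ corresponding to $\nu_j$ under the protruding-edge indexing. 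Depending on whether $\eta_i$ lies to the left or to the right of $\nu_j$, \autoref{lem: add shadowed} or \autoref{lem: add shadowed left-aligned} applies and yields
$$
w_q(S_1, S_2) - w_q(S_1', S_2) = w_q(\ttheta(S_1, S_2)) - w_q(\ttheta(S_1', S_2)).
$$
By the inductive hypothesis applied to $(S_1', S_2)$, we have $w_q(S_1', S_2) = w_q(\ttheta(S_1', S_2))$, which rearranges to give the desired equality $w_q(S_1, S_2) = w_q(\ttheta(S_1, S_2))$.

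For the base case $S_1 = \emptyset$, no edge of $S_2$ is overflowing, so $\ttheta(\emptyset, S_2) = (\widetilde S_1, \emptyset)$ with $\widetilde S_1 = \{\eta_i \in \calP' : \nu_i \notin S_2\}$. I would then run a secondary induction on $|S_2|$. Here \autoref{lem: flip unshadowed} is a useful tool: it lets us freely add all unshadowed horizontal edges to $S_1$ on each side without altering the quantum weight, which isolates the contribution of a single vertical edge removed from $S_2$. The base-base case $S_1 = S_2 = \emptyset$ reduces to a direct comparison of the quantum weights of the fully $h$-capitalized Dyck paths $\calP(\ell, h)$ and $\calP(h, rh - \ell)$, an explicit computation that leverages the substitution rules $E^{r-1} N \mapsto EN$ and $E^{r} N \mapsto E$ relating the two paths.

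The main obstacle will be the base case. In contrast to the inductive step, removing a vertical edge from $S_2$ is a non-local operation: it can reshuffle the cascade pairings of other edges of $S_2$ and alter the set of protruding edges, so the induced change in $\ttheta$ is not confined to a single letter of the word associated to the compatible pair. Carefully tracking these structural rearrangements and matching them with the corresponding quantum-weight shift on $\calP'$ is the key technical challenge of the proof.
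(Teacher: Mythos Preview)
Your inductive step on $|S_1|$ is exactly the paper's argument: add the horizontal edges of $S_1$ one at a time and invoke \autoref{lem: add shadowed} or \autoref{lem: add shadowed left-aligned} according to whether the cascade-paired vertical edge lies to the right or left.

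Where you diverge is the base case $S_1=\emptyset$, and here you have made it harder than it needs to be. The paper dispatches this case directly, with no secondary induction. When $S_1=\emptyset$, the word for $(\emptyset,S_2)$ has only lowercase $h$'s among its horizontal letters, so the part of $w_q$ that varies with $S_2$ comes entirely from pairs of vertical letters: each ordered pair $(\nu_i,\nu_j)$ with $i<j$ contributes $\pm r$ according to whether $\nu_i\notin S_2,\;\nu_j\in S_2$ or vice versa. On the other side, $\ttheta(\emptyset,S_2)=(\widetilde S_1,\emptyset)$ has only lowercase $v$'s among its vertical letters, so the variable part of its weight comes from pairs of horizontal letters of $\calP'$, contributing $\pm r$ according to membership in $\widetilde S_1$. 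Since $\eta_i\in\widetilde S_1$ iff $\nu_i\notin S_2$, these two signed inversion counts on $[h]$ coincide term by term. That is the whole argument.

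In particular, the obstacle you flag---that removing a vertical edge from $S_2$ reshuffles cascade pairings---is a red herring in the base case. With $S_1=\emptyset$ there are no overflowing edges, so $\widetilde S_2=\emptyset$ and cascades play no role in computing either side; removing one edge from $S_2$ simply flips one letter of $\widetilde S_1$ from $h$ to $H$, a purely local change. Neither the secondary induction on $|S_2|$ nor the appeal to \autoref{lem: flip unshadowed} is needed here.
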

\begin{proof}
We first see that $\ttheta$ preserves quantum weights when $S_1$ is empty, since the quantum weight of $(\emptyset,S_2)$ only depends on the number of $i,j \in [h]$ such that $\nu_i \in S_2$ and $\nu_j \notin S_1$.  An analogous statement holds for $\ttheta(\emptyset,S_2) = (\emptyset,\emptyset)$.

One can then add on horizontal edges to the compatible pair one-by-one.  The fact that the quantum weight is preserved under this operation follows from \autoref{lem: add shadowed} and \autoref{lem: add shadowed left-aligned}.
\end{proof}

We lastly show that the compatible pairs satisfy an analogous recursion to that for the broken lines \autoref{lem: broken line recursion}.

\begin{lem}\label{lem: cp recursion}
If $rb \leq c_{n+1}$, We have that
$$|\CP(c_{n+1},c_n,a,b)|_q = \sum_{t=0}^{a} \binom{c_{n+1} - rb}{t}_{q^{2r}}|\CP(c_n,c_{n-1},r(a-t)-b,a - t)|_q\,.$$
\end{lem}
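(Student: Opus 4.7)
The plan is to mirror the proof of \autoref{lem: broken line recursion} by constructing an explicit quantum-weighted bijection
\[
\CP(c_{n+1},c_n,a,b) \;\longleftrightarrow\; \bigsqcup_{t=0}^{a} \binom{[c_{n+1}-rb]}{t} \times \CP(c_n,c_{n-1},r(a-t)-b,a-t),
\]
in which the choice of a $t$-subset contributes a $q^{2r}$-weighted inversion statistic aggregating to the quantum binomial $\binom{c_{n+1}-rb}{t}_{q^{2r}}$. On the broken-line side the parameter $t$ is the multiplicity of bending at the $y$-axis; on the compatible-pair side the analogous role will be played by the number of unshadowed horizontal edges in $S_1$, and the reduced pair $(S_1',S_2')$ will be produced via $\ttheta$ on $\calP(c_n,c_{n-1})$.

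Given $(S_1,S_2) \in \CP(c_{n+1},c_n,a,b)$, I would first set $U \colonequals S_1 \setminus \sh(S_2)$ and $t \colonequals |U|$. Under the hypothesis $rb \leq c_{n+1}$, the shadow $\sh(S_2)$ has size exactly $rb$ on the cluster-monomial Dyck path, so $U$ identifies with a $t$-subset of the $(c_{n+1}-rb)$-element set $\calP_1 \setminus \sh(S_2)$. Second, I would apply $\lambda$ to $(S_1 \cap \sh(S_2),S_2) \in \CP_{\sh}(c_{n+1},c_n,a-t,b)$ and then $\ttheta$, using \autoref{lem: theta composition} together with \autoref{lem: ttheta quantum preservation} to land with preserved quantum weight in $\CP(c_n,c_{n-1},c_n-b,a-t)$. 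Third, I would invoke \autoref{lem: flip unshadowed} in reverse to strip off the $c_n-r(a-t)$ unshadowed horizontal edges in the image, producing $(S_1',S_2') \in \CP(c_n,c_{n-1},r(a-t)-b,a-t)$ with no change in quantum weight.

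The principal obstacle is showing that summing over the $\binom{c_{n+1}-rb}{t}$ choices of $U$ produces exactly the quantum binomial $\binom{c_{n+1}-rb}{t}_{q^{2r}}$ rather than the classical one. This requires a refinement of \autoref{lem: flip unshadowed}: the quantum weight shift caused by toggling a single unshadowed edge into $S_1$ should depend on how many previously selected unshadowed positions lie to its right, so that the total contribution of $U$ is $q^{2r\,\mathrm{inv}(U)}$ for the inversion statistic of $U$ inside $\calP_1 \setminus \sh(S_2)$ under a fixed linear order, whose generating function is precisely the $q^{2r}$-binomial. Checking this quantitative refinement, together with verifying that the composed map is a bijection onto the disjoint union on the right, constitutes the main technical content; once both are in place, the identity follows immediately by summing over $t$ and over $U$ with $|U|=t$.
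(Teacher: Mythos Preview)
Your plan follows the paper's route almost exactly: partition by the number $t$ of horizontal edges of $S_1$ lying outside the shadow/cascade, peel those off to obtain the quantum binomial factor, and then push the remaining pair in $\CP_{\sh}$ (equivalently $\CP_{\cas}$) through $\ttheta$ using \autoref{lem: ttheta quantum preservation} and \autoref{lem: flip unshadowed}. The only substantive gap is the step you flag as the ``principal obstacle'': you propose that a new refinement of \autoref{lem: flip unshadowed} is needed to see that the choices of $U$ aggregate to $\binom{c_{n+1}-rb}{t}_{q^{2r}}$, but in fact this is exactly what \autoref{prop: gen hor swap} already supplies. Between two consecutive unshadowed horizontal positions every horizontal edge in the intervening segment $\sigma$ is shadowed by a $V$ in $\sigma$ (shadows are contiguous and cannot jump over an unshadowed edge), so $|\sigma|_h+|\sigma|_H=r|\sigma|_V$ and \autoref{prop: gen hor swap} gives a shift of exactly $2r$ for each adjacent transposition of an $h$ and an $H$ among the unshadowed positions. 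Iterating, the weight of $(S_1,S_2)$ differs from that of $(S_1\cap\sh(S_2),S_2)$ by $2r$ times an inversion statistic of $U$ inside $\calP_1\setminus\sh(S_2)$, and the sum over $U$ of size $t$ is the Gaussian binomial in $q^{2r}$. No new lemma is required.

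A cosmetic difference: the paper stratifies by $S_1\setminus\cas(S_2)$ and applies $\ttheta$ directly to $\CP_{\cas}$, whereas you stratify by $S_1\setminus\sh(S_2)$, pass through $\lambda$, and then apply $\ttheta$. By \autoref{lem: theta composition} these are the same map, so the two formulations are equivalent; the cascade version is marginally cleaner only because \autoref{lem: ttheta quantum preservation} is stated for $\CP_{\cas}$.
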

\begin{proof}
We first note that
$$|\CP(c_{n+1},c_n,,a,b)|_q = \bigcup_{t=0}^{a} |\{(S_1,S_2) \in \CP(c_{n+1},c_n,a,b): |\cas(S_2) \cap S_1| = t\}|\,.$$
Hence, by \autoref{prop: gen hor swap} and \autoref{lem: ttheta quantum preservation}, we have
$$|\CP(c_{n+1},c_n,,a,b)|_q = \sum_{t=0}^a\binom{c_{n+1}-rb}{t}_{q^{2r}}|\CP_{\cas}(c_{n+1},c_n,a-t,b)|_q\,.$$
Then, applying $\theta$ to $\CP_{\cas}(c_{n+1},c_n,a-t,b)$ and using \autoref{lem: flip unshadowed}, we have
\begin{align*}|\CP_{\cas}(c_{n+1},c_n,a-t,b)|_q &= |\{(S_1,S_2) \in \CP(c_{n},c_{n-1},c_n-b,a-t) : \calP_1 \cut \cas(S_2) \subseteq S_1\}|_q\\
&= |\CP_{\cas}(c_{n},c_{n-1},r(a-t)-b,a-t)|_q\,.
\end{align*}
The desired equality follows from substituting the above expression in the summation.
\end{proof}

\subsection{From Cluster Variables to Cluster Monomials}
The quantum analogs of cluster monomials for $\calA_q(r,r)$ the \emph{bar-invariant quantum cluster monomials}, which are elements in $\calT$ of the form $q^{\alpha\beta}X_n^\alpha X_{n+1}^\beta$.

While we have restricted the proofs in this section to the cluster variable case for simplicity, the same methods can be readily adapted to the case of cluster monomials.  In this subsection, we outline the main steps of this adaptation to the cluster monomial case.

Let $c_n[\alpha,\beta] = \alpha c_n + \beta c_{n-1}$.  Note that the sequence $c_n[\alpha,\beta]$ still satisfies the recursion $c_{n+1}[\alpha,\beta] = rc_n[\alpha,\beta]-c_{n-1}[\alpha,\beta]$, and that $c_n = c_n[1,0]$.

\begin{rem}\label{rem: broken line cluster monomial recursion}
The statement and proofs of \autoref{cor: cn identities}, \autoref{prop: partition formula negative am}, and \autoref{lem: broken line recursion} hold true if $c_n$ is replaced with $c_n[\alpha,\beta]$ whenever $n \in \{h-\ell + 1, h-1,h,h+1, h + \ell\}$.  Thus, we have 
$$|\BL_-(c_{n+1}[\alpha,\beta],c_n[\alpha,\beta],a,b)|_q = \sum_{t=0}^a \binom{c_n[\alpha,\beta]-b}{a-t}_{q^{2r}}|\BL_-(c_{n}[\alpha,\beta],c_{n-1}[\alpha,\beta],rt-b,t)|_q\,.$$
\end{rem}

In order to study the compatible pairs associated to cluster monomials, we first address how to construct cascades.

\begin{rem}
The path $\calP(\alpha c_n + \beta c_{n+1}, \alpha c_{n-1} + \beta c_n)$ can be constructed by appending copies of the paths $\calP(c_n,c_{n-1})$ or $\calP(c_{n+1},c_n)$.  Proceeding from left to right, the $i^{\Th}$ subpath appended is $\calP(c_{n+1},c_n)$ whenever the $i^{\Th}$ edge of $\calP(\alpha,\beta)$ is horizontal and is $\calP(c_n,c_{n-1})$ otherwise. 
\end{rem}

\begin{defn}
Let $\calP = \calP(\alpha c_n + \beta c_{n+1}, \alpha c_{n-1} + \beta c_n)$ be a maximal Dyck path corresponding to a cluster monomial in $\calA(r,r)$.  Decompose $\calP$ into $\alpha + \beta$ subpaths $\{\calP_i\}_{1 \leq i \leq \alpha + \beta}$ of the form $\calP(c_n,c_{n-1})$ or $\calP(c_{n+1},c_n)$.  Then, for $S_2 \subset \calP_2$, the cascade of $S_2$ is computed locally on each subpath $\calP_i$ according to \autoref{defn: cascade}.
\end{defn}

Having constructed cascades, the definition of the map $\ttheta$ can then be extended directly to the cluster monomial case.  The proof that $\ttheta$ preserves quantum weights follows identically from that in \autoref{subsec: preserving quantum weight} except that now cascades are considered locally on each subpath $\calP_i$.  That is, one can readily check that the quantum weight is preserved for compatible pairs with no horizontal edge, and then one can iteratively add in horizontal edges using \autoref{lem: add shadowed} and \autoref{lem: add shadowed left-aligned}.  From this, we can conclude

$$|\CP(c_{n+1}[\alpha,\beta],c_n[\alpha,\beta],a,b)|_q = \sum_{t=0}^a \binom{c_n[\alpha,\beta]-a}{a-t}_{q^{2r}}|\CP(c_{n}[\alpha,\beta],c_{n-1}[\alpha,\beta],rt-b,t)|_q\,.$$
Combining this with \autoref{rem: broken line cluster monomial recursion}, we can conclude that, in the cluster monomial setting, $\ttheta$ is a weighted bijection that preserves quantum weights.  Thus, we can conclude that $\phi$ is a $q$-weighted bijection in the cluster monomial case.

\begin{thm}\label{thm: phi q weighted bijection}
    If $rb \leq \ell$ and the path $\calP(\ell,h)$ corresponds to a cluster monomial, then the map $\phi: \CP(\ell,h,a,b) \to \BL_-(\ell,h,a,b)$ is a $q$-weighted bijection.
\end{thm}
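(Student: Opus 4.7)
The plan is to prove \autoref{thm: phi q weighted bijection} by strong induction on $n$, where $\calP(\ell,h)$ corresponds to a cluster monomial built from $X_n$ and $X_{n+1}$, exploiting the matching recursions for $\BL_-$ and $\CP$ that have already been set up.  For concreteness I will work through the cluster variable case $(\ell,h) = (c_{n+1},c_n)$; the cluster monomial case follows by the argument sketched at the end of the section, since cascades can be computed locally on each subpath $\calP_i$ in the decomposition of $\calP(\alpha c_n + \beta c_{n+1}, \alpha c_{n-1} + \beta c_n)$.

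For the base case, I would take $h = 0$.  Then $\CP(\ell,0,a,0)$ is just the set of choices of $a$ horizontal edges out of $\ell$, and $\BL_-(\ell,0,a,0)$ consists of a single broken line bending only at the $y$-axis with multiplicity $a$.  A direct computation using the definition of $w_q$ on words in $\{h,H\}^*$ shows that
\[
\sum_{|S_1| = a} q^{w_q(S_1,\emptyset)} \;=\; \binom{\ell}{a}_{q^{2r}},
\]
which matches the quantum weight of the unique broken line by \autoref{prop: partition formula negative am}.

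For the inductive step, fix $\gamma \in \BL_-(c_{n+1}, c_n, a, b)$ and let $t$ denote the multiplicity of the bending of $\gamma$ at the $y$-axis.  Peeling off this final bending produces a broken line $\gamma' \in \BL_-(c_n, c_{n-1}, r(a-t)-b, a-t)$ under the wall-relabeling used in \autoref{lem: broken line recursion}, and \autoref{prop: partition formula negative am} gives $|\gamma|_q = \binom{c_{n+1}-rb}{t}_{q^{2r}} \, |\gamma'|_q$.  The central combinatorial claim I need to establish is that $(S_1,S_2) \in \phi^{-1}(\gamma)$ if and only if (i) $|S_1 \setminus \cas(S_2)| = t$, and (ii) $\ttheta(S_1,S_2) \in \phi^{-1}(\gamma')$ after the index shift $i \mapsto i-1$ in \autoref{defn: phi}.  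This reduces to observing that one iteration of $\ttheta$ shifts the sequence $b_i$ by one and removes the contribution from the bending at the $y$-axis.  Granting this, the preimage factors as $\phi^{-1}(\gamma) = \bigsqcup_{U} \{(U \cup S_1', S_2) : \ttheta(S_1',S_2) \in \phi^{-1}(\gamma')\}$, where $U$ ranges over the $\binom{c_{n+1}-rb}{t}$ subsets of horizontal edges outside $\cas(S_2)$ of size $t$.  Applying \autoref{prop: gen hor swap} to sum over $U$ produces the factor $\binom{c_{n+1}-rb}{t}_{q^{2r}}$, applying \autoref{lem: ttheta quantum preservation} identifies the weight of $(S_1', S_2)$ with that of $\ttheta(S_1', S_2)$, and applying the inductive hypothesis to $\gamma'$ yields
\[
\sum_{(S_1,S_2) \in \phi^{-1}(\gamma)} q^{w_q(S_1,S_2)} \;=\; \binom{c_{n+1}-rb}{t}_{q^{2r}} \, |\gamma'|_q \;=\; |\gamma|_q,
\]
as desired.

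The main obstacle is the combinatorial claim that $\ttheta$ intertwines with ``peeling off the first bending'' of $\gamma$.  In particular, one must check that the sequence $(b_i)_{i \geq 0}$ associated to $(S_1,S_2)$ satisfies $b_i((\widetilde S_1, \widetilde S_2)) = b_{i+1}((S_1,S_2))$, and that the multiplicity $b_{-1} - (rb_0 - b_1) = a - t$ (where $b_{-1}$ is interpreted via the wall at the $y$-axis) correctly records $t$.  This is a bookkeeping calculation on the iterated cascades, but it is the place where the definition of $\phi$ must be reconciled with the recursive structure used to prove \autoref{lem: cp recursion} and \autoref{lem: broken line recursion}; everything else in the proof then follows by assembling the already-established lemmas.
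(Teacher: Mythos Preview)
Your approach is essentially the same as the paper's: both reduce the result to the matching recursions (\autoref{lem: broken line recursion} for broken lines and \autoref{lem: cp recursion} for compatible pairs), combined with the quantum-weight preservation of $\ttheta$ (\autoref{lem: ttheta quantum preservation}) and \autoref{prop: gen hor swap}, and conclude by induction on $n$. Your version is more explicit about the fiber-wise matching, which is a virtue; the ``main obstacle'' you flag---that $\ttheta$ shifts the sequence $(b_i)$ by one---is immediate from the definition $b_i = |S_2^{(i)}|$ with $(S_1^{(i)},S_2^{(i)}) = \ttheta^{\,i}(S_1,S_2)$, so there is no genuine difficulty there.
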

\begin{proof}
By the above discussion, it is enough to consider the cluster variable case.  The map $\phi$ is defined by taking successive applications of the map $\ttheta$ and extracting the associated quantum binomial coefficient, which also corresponds to a bending of a broken line at one wall of the scattering diagram.  A recursion for the quantum weights of the compatible pairs in terms of $\ttheta$ is given in \autoref{lem: cp recursion} that matches the recursion given for broken lines of negative angular momentum \autoref{lem: broken line recursion}.  Thus, we can conclude that $\phi$ is a bijection that preserves quantum weights.
\end{proof}

By \autoref{lem: almost all}, \autoref{thm: phi q weighted bijection} applies to almost all compatible pairs on paths corresponding to cluster monomials.

\section{A Bijection for the Kronecker Theta Basis}\label{sec: bijection for kronecker}
In this section, we now study a map between all compatible pairs and broken lines corresponding to theta basis elements of the quantum Kronecker cluster algebra $\calA_q(2,2)$. We then show that the theta basis for the quantum Kronecker cluster algebra coincides with many other bases, including the quantum triangular, greedy, and bracelets bases.

\subsection{Positive angular momentum}

   In order to handle the positive angular momentum case, we first extend our map $\ttheta$. The fact that this map indeed preserves compatibility is shown in \autoref{prop: ttheta kron compatibility}. We will be primarily interested in the cascade structure of $\calP(n-3,n-2)$, which is obtained from $\calC_n$ by a reflection.  Since we are working in the reflected Dyck path, we take the convention that $|S_1| = b$ and $|S_2| = a$ in this subsection.

\begin{defn}\label{defn: tilde theta kron}
Suppose $2a \leq h-1$ and $r = 2$.  We define a map $\ttheta:\CP_{\cas}(h-1,h,b,a) \to \CP(h,h+1,h-a,b)$ taking a compatible pair $(S_1,S_2)$ on $\calP = \calP(h-1,h)$ to a compatible pair $(\widetilde S_1, \widetilde S_2)$ on $\calP' = \calP(h,h+1)$ by
\begin{itemize}
\item $\widetilde S_1 = \{\eta_i \in \calP': \nu_i \in \calP \cut S_2 \}$, 
\item $\widetilde S_2 \cut \{\nu_h,\nu_{h+1}\} = \{\nu_j \in \calP' : \nu_{\pro,j} \text{ is overflowing}\}$, and
\item $\nu_h \in \calP'$  (\resp $\nu_{h+1} \in \calP'$) is included in $\widetilde S_2$ if the horizontal edge that is $1$-cascade-paired (\resp $2$-cascade-paired) with $\nu_h \in \calP$ is in $S_1$. 
\end{itemize}    
\end{defn}

\begin{figure}[h]
\[
\begin{tikzpicture}[scale=.3];
\draw[line width=1,color=black] (0,0)--(1,0)--(1,1)--(2,1)--(2,2)--(3,2)--(3,3)--(4,3)--(4,4)--(5,4)--(5,5)--(6,5)--(6,6)--(7,6)--(7,7)--(8,7)--(8,8)--(9,8)--(9,9)--(10,9)--(10,10)--(11,10)--(11,11)--(12,11)--(12,13);
\draw[line width=2pt, color=red] (1,0)--++(0,1);
\draw[line width=2pt, color=red] (3,2)--++(0,1);
\draw[line width=2pt, color=red] (4,3)--++(0,1);
\draw[line width=2pt, color=red] (11,10)--++(0,1);
\draw[line width=2pt, color=red] (12,11)--++(0,1);
\draw[line width=2pt, color=red] (12,12)--++(0,1);
\draw[line width=2pt, color=red] (5,5)--++(1,0);
\draw[line width=2pt, color=red] (4,4)--++(1,0);
\draw[line width=2pt, color=red] (7,7)--++(1,0);
\draw[line width=2pt, color=red] (8,8)--++(1,0);

\draw (15.5,7.5) node[anchor=west]  {\large$\ttheta$};
\draw[-Stealth] (13,6)--(20,6);
\draw[line width=1,color=black,xshift=600] (0,0)--(1,0)--(1,1)--(2,1)--(2,2)--(3,2)--(3,3)--(4,3)--(4,4)--(5,4)--(5,5)--(6,5)--(6,6)--(7,6)--(7,7)--(8,7)--(8,8)--(9,8)--(9,9)--(10,9)--(10,10)--(11,10)--(11,11)--(12,11)--(12,12)--(13,12)--(13,14);
\draw[line width=2pt, color=red,xshift=600] (13,12)--++(0,1);
\draw[line width=2pt, color=red,xshift=600] (12,11)--++(0,1);
\draw[line width=2pt, color=red,xshift=600] (1,0)--++(0,1);
\draw[line width=2pt, color=red,xshift=600] (4,3)--++(0,1);
\draw[line width=2pt, color=red,xshift=600] (1,1)--++(1,0);
\draw[line width=2pt, color=red,xshift=600] (4,4)--++(1,0);
\draw[line width=2pt, color=red,xshift=600] (5,5)--++(1,0);
\draw[line width=2pt, color=red,xshift=600] (6,6)--++(1,0);
\draw[line width=2pt, color=red,xshift=600] (7,7)--++(1,0);
\draw[line width=2pt, color=red,xshift=600] (8,8)--++(1,0);
\draw[line width=2pt, color=red,xshift=600] (9,9)--++(1,0);
\end{tikzpicture}
\]
\label{fig: ttheta kron example}
\caption{An illustration of the map $\ttheta$ applied to the compatible pair $(S_1,S_2)$ on $\calP(12,13)$, where $S_1 = \{\eta_5,\eta_6,\eta_8,\eta_9\}$ and $S_2 = \{\nu_1,\nu_3,\nu_4,\nu_{11},\nu_{12},\nu_{13}\}$.}

\end{figure}
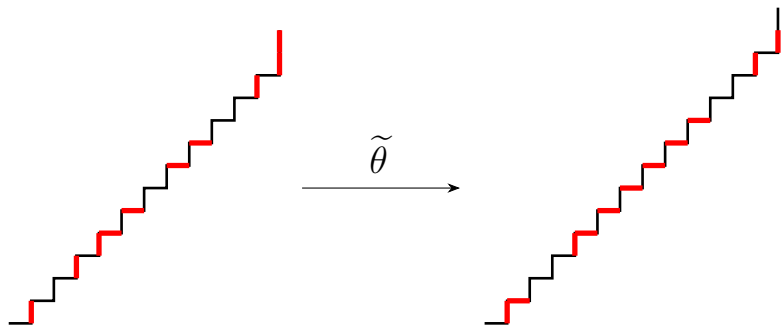

\begin{rem}
In a sense, \autoref{defn: tilde theta kron} is a natural extension of \autoref{defn: tilde theta}.  This is because, by definition, a vertical edge is overflowing if and only if the horizontal edge that is $r$-cascade-paired with it is in $S_1$.  The only difference is that in this case, we need to also consider whether the horizontal edge that is $1$-cascade-paired with each vertical edge is in $S_1$.  This is only possible for the vertical edge $\nu_h \in \calP$, hence why the last condition is added to handle this case.
\end{rem}

\begin{prop}\label{prop: ttheta kron compatibility}
If $2a \leq h-1$, then the map $\ttheta:\CP_{\cas}(h-1,h,b,a) \to \CP(h,h+1,h-a,b)$ preserves compatibility.
\end{prop}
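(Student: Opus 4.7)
The plan is to verify compatibility of $(\widetilde S_1, \widetilde S_2)$ on $\calP' = \calP(h,h+1)$ by reducing the bulk of the argument to the compatibility preservation of the original $\ttheta$ from \autoref{cor: ttheta compatibility}, and treating the two top vertical edges $\nu_h, \nu_{h+1} \in \calP'$ as boundary cases. For each pair $\widetilde\eta = \eta_i \in \widetilde S_1$ and $\widetilde\nu = \nu_j \in \widetilde S_2$, the goal is to produce a lattice point $t \in \overrightarrow{\widetilde\eta\, \widetilde\nu}$ satisfying
\[
|t\, p_{\widetilde\nu}|_1 = 2\,|\overrightarrow{t\, p_{\widetilde\nu}} \cap \widetilde S_2| \qquad \text{or} \qquad |p_{\widetilde\eta}\, t|_2 = 2\,|\overrightarrow{p_{\widetilde\eta}\, t} \cap \widetilde S_1|.
\]

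The first step is to handle $j \leq h-1$, where the definition of $\ttheta$ specializes to that of \autoref{defn: tilde theta}. Here I would decompose $\ttheta = \theta \circ \lambda^{-1}$ as in \autoref{lem: theta composition} and invoke the compatibility preservation of the classical $\theta$ together with \autoref{lem: lambda inverse compatibility}, exactly as in the proof of \autoref{cor: ttheta compatibility}. The cascade-shadow dictionary of \autoref{lem: lambda shadow exchange} is entirely supported below $\nu_h \in \calP$, so nothing at the top of the path interferes with this reduction and the witness $t$ provided by that argument works unchanged.

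The substantive cases are $j = h$ and $j = h+1$. By the third bullet of \autoref{defn: tilde theta kron}, these edges lie in $\widetilde S_2$ precisely when the horizontal edges that are $1$-cascade-paired and $2$-cascade-paired with $\nu_h \in \calP$ belong to $S_1$, respectively. I would verify the compatibility condition directly in these cases: if $\nu_h \in \widetilde S_2$, I would take $t$ to be the lattice point just below the cascade-mate of $\nu_h$ in $\calP$, and use the fact that the horizontal edges of $\widetilde S_1$ in the segment terminating at $p_{\nu_h} \in \calP'$ are indexed exactly by the vertical edges absent from $S_2$ at the corresponding heights in $\calP$; the cascade identity at $\nu_h \in \calP$ then translates into the first equality above. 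The case $j = h+1$ is handled analogously using the $2$-cascade-pair. The main obstacle will be the bookkeeping required to translate vertical-edge indices in $\calP$ into horizontal-edge indices in $\calP'$, and to check that the cascade-pair data recorded at $\nu_h \in \calP$ produces precisely the pattern of presences and absences in $(\widetilde S_1, \widetilde S_2)$ needed to balance the counts at the top of $\calP'$; once this correspondence is made explicit, the compatibility inequality should reduce to the defining relation of the cascade.
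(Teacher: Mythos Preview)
Your reduction to \autoref{cor: ttheta compatibility} and \autoref{lem: lambda inverse compatibility} does not go through as written. Those results are stated and proved for Dyck paths corresponding to cluster monomials with $(r-1)h \leq \ell \leq rh$; here $r=2$ and the source path is $\calP(h-1,h)$, which has $\ell = h-1 < h$ and therefore falls outside that hypothesis. The path $\calP(h-1,h)$ has $h$ vertical edges but $2h-(h-1)=h+1$ protruding labels, which is precisely why \autoref{defn: tilde theta kron} needs the extra clause for $\nu_{h+1}$; the earlier $\ttheta$ is not even well-defined on this path without that modification. Likewise, the factorization $\theta = \ttheta\circ\lambda$ of \autoref{lem: theta composition} and the appeal to \cite[Remark 2.21]{Rup4} underlying \autoref{lem: lambda inverse compatibility} are only verified for the $\ell\geq h$ regime. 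You would need to re-establish all of this machinery on the reflected path before the reduction becomes valid, and that is more work than the proposition itself.

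The paper instead exploits that both $\calP(h-1,h)$ and $\calP(h,h+1)$ are staircases of the form $(EN)^{k}N$, on which compatibility admits the elementary characterization: $\nu_j\in S_2$ forces the horizontal edges at heights $j-1$ and $j-2$ to be absent from $S_1$. With this in hand, one checks directly that if $\nu_j\in\widetilde S_2$ for $j<h$, then $\nu_j$ was overflowing in $\calP$, hence $\nu_{j-1}\in S_2$ (or $j=1$), so $\eta_{j-1},\eta_j\notin\widetilde S_1$ by the first bullet of \autoref{defn: tilde theta kron}; the cases $j=h,h+1$ are similarly immediate. No recursion to the earlier map is needed.
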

\begin{proof}
It is a straightforward consequence of the definition of compatibility that a pair $(S_1,S_2)$ on $\calP(h-1,h)$ is compatible if and only if, whenever $\nu_j$ is in $S_2$, there is no vertical edge at height $j-1$ or $j-2$ (where the height of $\eta_1$ is $0$).  

Suppose $(S_1,S_2)$ is a compatible pair on $\calP(h-1,h)$, and let $\ttheta(S_1,S_2) = (\widetilde S_1, \widetilde S_2)$.  Suppose $\nu_j \in \widetilde S_2$ for some $j < h$.  Then $\nu_j \in S_2$ is overshadowing, which implies that $j = 1$ or $\nu_{j-1} \in S_2$.  Hence, $\eta_j$ and $\eta_{j-1}$ are not in $\widetilde S_1$.  Moreover, if $\nu_h \in S_2$, then $\eta_h \notin \widetilde S_1$.  Combining this with the characterization of compatible pairs above, we can conclude that $(\widetilde S_1, \widetilde S_2)$ is compatible.
\end{proof}

Another distinction of the positive angular momentum case is that the size of $S_2$ is only non-decreasing, rather than strictly decreasing as in the negative angular momentum case.  Let $(S_1^{(i)},S_2^{(i)}) = \underbrace{\ttheta \circ \ttheta \circ \cdots \circ \ttheta}_{i}(S_1,S_2)$ for $i \geq 0$.  Let $a_i = |S_2^{(i)}|$.  We then set $a_{\infty} = \lim_{i \to \infty} a_i$.  

\begin{figure}[h]
\[
\begin{tikzpicture}[scale=.3];
\draw[line width=1,color=black] (0,0)--(1,0)--(1,1)--(2,1)--(2,2)--(3,2)--(3,3)--(4,3)--(4,4)--(5,4)--(5,5)--(6,5)--(6,6)--(7,6)--(7,8);
\draw[line width=2pt, color=red] (1,0)--++(0,1);
\draw[line width=2pt, color=red] (7,6)--++(0,2);
\draw[line width=2pt, color=red] (1,1)--++(1,0);
\draw[line width=2pt, color=red] (4,4)--++(1,0);
\draw[line width=2pt, color=red] (3,3)--++(1,0);
\draw (11,5.5) node[anchor=west]  {\large$\ttheta$};
\draw[-Stealth] (9,4)--++(6,0);
\draw[line width=1,color=black,xshift=450] (0,0)--(1,0)--(1,1)--(2,1)--(2,2)--(3,2)--(3,3)--(4,3)--(4,4)--(5,4)--(5,5)--(6,5)--(6,6)--(7,6)--(7,7)--(8,7)--(8,9);
\draw[line width=2pt, color=red,xshift=450] (1,0)--++(0,1);
\draw[line width=2pt, color=red,xshift=450] (8,7)--++(0,2);
\draw[line width=2pt, color=red,xshift=450] (1,1)--++(1,0);
\draw[line width=2pt, color=red,xshift=450] (2,2)--++(1,0);
\draw[line width=2pt, color=red,xshift=450] (3,3)--++(1,0);
\draw[line width=2pt, color=red,xshift=450] (4,4)--++(1,0);
\draw[line width=2pt, color=red,xshift=450] (5,5)--++(1,0);
\draw[xshift=450] (11,5.5) node[anchor=west]  {\large$\ttheta$};
\draw[-Stealth,xshift=450] (9,4)--++(6,0);
\draw[line width=1,color=black,xshift=900] (0,0)--(1,0)--(1,1)--(2,1)--(2,2)--(3,2)--(3,3)--(4,3)--(4,4)--(5,4)--(5,5)--(6,5)--(6,6)--(7,6)--(7,7)--(8,7)--(8,8)--(9,8)--(9,10);
\draw[line width=2pt, color=red,xshift=900] (1,0)--++(0,1);
\draw[line width=2pt, color=red,xshift=900] (9,8)--++(0,2);
\draw[line width=2pt, color=red,xshift=900] (1,1)--++(1,0);
\draw[line width=2pt, color=red,xshift=900] (2,2)--++(1,0);
\draw[line width=2pt, color=red,xshift=900] (3,3)--++(1,0);
\draw[line width=2pt, color=red,xshift=900] (4,4)--++(1,0);
\draw[line width=2pt, color=red,xshift=900] (5,5)--++(1,0);
\draw[line width=2pt, color=red,xshift=900] (6,6)--++(1,0);
\end{tikzpicture}
\]
\label{fig: kron b infinity}
\caption{This figure depicts a compatible pair where $|S_2| = |S_2^{(i)}|$ for all $i \geq 0$.  In this example, we have $a_{\infty} = 3$.}
\end{figure}
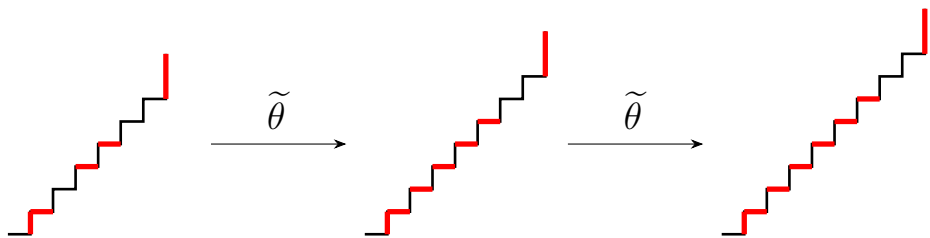

We can characterize the compatible pairs for which applying $\ttheta$ does not decrease $|S_2|$ as follows.

\begin{prop}\label{prop: ttheta stable}
Suppose $(S_1,S_2)$ is a compatible pair on $\calP(h-1,h)$, and let $\ttheta(S_1,S_2) = (\widetilde S_1, \widetilde S_2)$  If $|S_2| = |\widetilde S_2|$, then $(S_1,S_2)$ is of the form
\begin{align*}
    S_2 &= \{\nu_k \in \calP : k \in [1,i]\cup [h-j+1,h]\}\,.\\
S_1 &\supset \{\eta_k : k \in [i+1,2i] \cup [h-2j+1, h-j]\}\,.
\end{align*}
Moreover, we have that, if $\ttheta(\widetilde S_1, \widetilde S_2) = (S_1',S_2')$, then $|S_2'| = |\widetilde S_2|$.
\end{prop}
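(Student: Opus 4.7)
The strategy is to combine a careful analysis of the cascade structure with the compatibility definition to force a block structure on $S_2$, and to verify stability under iteration of $\ttheta$.

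Step 1: Bounding $|\widetilde{S}_2|$. I will decompose $\widetilde{S}_2$ into three disjoint pieces: the non-special part $\widetilde{S}_2 \cap \{\nu_j : j \in [1,h-1]\}$, which is populated by the overflowing edges of $S_2 \cap \{\nu_j : j \in [1, h-1]\}$; and the contributions from $\nu_h$ and $\nu_{h+1}$ in $\calP'$, each of which requires $\nu_h \in S_2$ and contributes at most one edge to $\widetilde{S}_2$. This yields $|\widetilde{S}_2| \leq |\{\nu \in S_2 : \nu \text{ overflowing}\}| + c$, where $c \in \{0,1,2\}$ depends on $\nu_h$ and the two special conditions. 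Forcing equality with $|S_2|$ severely restricts which edges of $S_2$ can fail to overflow, and forces $\nu_h \in S_2$ to provide at least one ``special'' contribution if any edge fails to overflow.

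Step 2: Cascade structure and overflowing. For each $\nu_k \in S_2$, I will compute the cascade-paired edge explicitly, noting that it depends globally on the positions of other elements of $S_2$. For an $\nu_k \in S_2$ with $k < h$, the horizontal edge $\eta_k$ immediately to its left is always assigned as its $1$-cp edge when $\nu_k$ is processed; the $2$-cp (cascade-paired) edge is determined by which edges remain unlabeled, and may be placed by the leftmost-first rule of Step~3 of \autoref{defn: cascade}. I will then compute the $\eta$-position forced in $S_1$ by the overflowing requirement on each $\nu_k$.

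Step 3: Forcing the block structure. Suppose for contradiction that there exists a ``middle'' element $\nu_k \in S_2$ with $i < k < h-j+1$, where $i$ is the largest index of a contiguous bottom run and $j$ the analogous top parameter. I will show that the cascade-paired edge of such a middle $\nu_k$ lies at a position which, when combined with the edges forced in $S_1$ by the overflowing of the bottom and top blocks, violates either compatibility (by placing an $\eta \in S_1$ in an illegal position relative to some $\nu \in S_2$) or the equality $|S_2| = |\widetilde{S}_2|$ (by introducing an additional overflowing or special contribution). This forces $S_2 = \{\nu_k : k \in [1,i] \cup [h-j+1, h]\}$.

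Step 4: Identifying the $S_1$ containment. Once the block structure is established, the cascade can be computed explicitly: the bottom block assigns $1$-cp edges $\{\eta_1,\ldots,\eta_i\}$ to $\{\nu_1,\ldots,\nu_i\}$ and routes their cascade-paired edges to $\{\eta_{i+1},\ldots,\eta_{2i}\}$ via Step~3; the top block assigns $1$-cp edges to $\{\eta_{h-j+1},\ldots,\eta_{h-1}\}$ and handles $\nu_h$ via the first unlabeled edges to its left, which lie precisely in the interval $[h-2j+1, h-j]$. The overflowing requirement for the bottom block, together with the special condition on $\nu_h$ that populates $\widetilde{S}_2$ at the top, forces $S_1 \supset \{\eta_k : k \in [i+1, 2i] \cup [h-2j+1, h-j]\}$.

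Step 5: Stability under iteration. For the ``moreover'' claim, I will compute $(\widetilde{S}_1, \widetilde{S}_2)$ explicitly for the block form, using the definitions of $\widetilde{S}_1$ and $\widetilde{S}_2$. The image has a shifted block structure on $\calP(h, h+1)$ with new parameters $(i', j')$ satisfying $i' + j' = i + j$; in particular the bottom run shrinks (or shifts) while the top run grows by one due to the special contribution at $\nu_{h+1}$. Applying Steps~1--4 to $(\widetilde{S}_1, \widetilde{S}_2)$ then yields $|S_2'| = |\widetilde{S}_2|$, completing the proof.

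The main obstacle will be Step~3: the cascade routing in the presence of a middle element $\nu_k$ must be analyzed carefully, since the cascade structure depends globally on $S_2$, and the compatibility condition interacts nontrivially with the edges forced into $S_1$ by the overflowing requirements on the blocks. Particular care is needed at the top of $\calP(h-1, h)$, where $\nu_{h-1}$ and $\nu_h$ sit at the same $x$-coordinate with no intervening horizontal edge, breaking the ``generic'' local pattern and necessitating the special treatment of $\nu_h$ and $\nu_{h+1}$ in the definition of $\ttheta$.
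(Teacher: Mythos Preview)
Your proposal follows essentially the same approach as the paper: both arguments count contributions to $|\widetilde{S}_2|$ and show that equality forces $S_2$ to consist only of contiguous runs at the bottom and top of $\calP$. The paper's execution is cleaner, however. Rather than fixing parameters $i,j$ in advance and arguing by contradiction against a single ``middle'' element $\nu_k$ (your Step~3), the paper decomposes $S_2$ into its maximal contiguous blocks $C$ and proves directly that any block not containing $\nu_1$ or $\nu_h$ satisfies $|\cas(C)\cap S_1|\le |C|-1$: the $1$-cascade-paired edges $\eta_k$ (for $\nu_k\in C$) are excluded from $S_1$ by compatibility, and additionally $\eta_{j-1}=\cas_2(\nu_j)$ is excluded when $\nu_j$ is the lowest edge of $C$ with $j\ge 2$. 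Summing over blocks immediately gives $|\widetilde{S}_2|\le |S_2|-(\text{number of middle blocks})$, so equality forces at most the two extreme blocks with $S_1\cap\cas(C)$ maximal. This block-level accounting avoids the case analysis your Step~3 would require (a single middle element versus a middle block, interactions between cascades of different blocks). Your Step~4 also asserts that the bottom block's cascade-paired edges land in $[i+1,2i]$, but the cascade routing is global: those labels are only placed once the next element of $S_2$ is processed, so their positions depend on the top block. This does not affect the final $S_1$ containment (which follows once you know $S_1\cap\cas(S_2)$ is maximal subject to compatibility), but your stated mechanism is not quite correct. Your Step~5 and the paper's final sentence are the same observation.
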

\begin{proof}
Consider a collection $C$ of contiguous vertical edges in $S_2$.  If the collection does not contain $\nu_h$, then $\cas(C)$ contains the horizontal edge preceding each edge in $C$.  These cannot be included $S_1$, so $|\cas(C) \cap S_1| \leq |C|$.  Moreover, if $\nu_j$ is the lowest edge in $C$ for $j \geq 2$, then $\eta_{j-1}$ must not be in $S_1$ in order to maintain compatibility. Hence $|S_1 \cap \cas(C)| \leq |C|-1$. 

The only cases that were excluded in the above consideration are when $\nu_1$ or $\nu_h$ is in $C$. In both cases, the above arguments show that $|\cas(C) \cap S_1| \leq |C|$.  Thus, if $|\cas(C) \cap S_1| \leq |C|$, then $C$ must be a contiguous set of vertical edges at the top or bottom of $\calP$, and $S_1 \cap \cas(C)$ must be maximal while maintaining the compatibility condition.

Lastly, note that if we apply $\ttheta$ to a compatible pair of this form, the resulting compatible pair is also of this form.
\end{proof}

\begin{defn}
The map $\phi$ takes the compatible pair $(S_1,S_2) \in \CP(h-1,h,b,a)$  to the broken line in $\BL_{+}(h,h-1,b,a)$ that crosses the wall $\frakd_1$ with multiplicity $a_{\infty}$ and the wall $\frakd_{(i-1)/i}$ with multiplicity $a_{i-1}-(2a_i-a_{i+1})$.   
\end{defn}

\begin{thm}\label{thm: phi q weighted bijection kron pos}
    If $2a \leq h-1$, the map $\phi$ is a $q$-weighted bijection from $\CP(h-1,h,b,a)$ to $\BL_{+}(h,h-1,b,a)$.
\end{thm}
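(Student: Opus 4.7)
The plan is to follow the template of Theorem \ref{thm: phi q weighted bijection}: establish that $\ttheta$ preserves quantum weights on $\CP_{\cas}(h-1,h,b,a)$, derive a compatible-pair recursion that matches the broken-line recursion of Lemma \ref{lem: bl recursion pos}, and then verify agreement at the terminal case, which now corresponds to bending at the non-cluster wall $\frakd_1$ rather than to the empty compatible pair.

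First I would adapt Lemmas \ref{lem: add shadowed} and \ref{lem: add shadowed left-aligned} to this setting. The word-algebra calculations there use only local cascade-pairing, so they carry over directly except at the top vertical edge $\nu_h \in \calP$, whose $1$-cascade-paired horizontal edge is now tracked by the third bullet of Definition \ref{defn: tilde theta kron}; one checks that the corresponding local weight identity still holds. Combining the resulting preservation of quantum weight under $\ttheta$ with Proposition \ref{prop: gen hor swap} and Lemma \ref{lem: flip unshadowed} (exactly as in the proof of Lemma \ref{lem: cp recursion}) should yield the recursion
$$|\CP(h-1,h,b,a)|_q \;=\; \sum_{t=0}^{b}\binom{h-2a}{t}_{q^{4}}\,|\CP(h,h-1,2(b-t)-a,b-t)|_q\,,$$
which by Lemma \ref{lem: bl recursion pos} is the same recursion satisfied by $|\BL_{+}(h,h-1,b,a)|_q$. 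Iterating this identifies the multiplicity $a_{i-1}-(2a_i - a_{i+1})$ of the bending over $\frakd_{(i-1)/i}$ with the exponent of the matching quantum binomial on the compatible-pair side, exactly as in \autoref{thm: phi q weighted bijection}.

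The main obstacle, and the genuinely new point, is the terminal step: the sequence $a_i$ does not reach $0$ but instead stabilizes at $a_\infty$, and this stable regime must produce the Reineke factor $[a_\infty + 1]_{q^{4}}$ from Proposition \ref{prop: partition formula kron positive am}. Proposition \ref{prop: ttheta stable} pins down the stable pairs: $S_2 = \{\nu_k : k\in[1,i]\cup[h-j+1,h]\}$ with $i+j=a_\infty$, together with the forced horizontal edges in the local cascades and a free choice of unshadowed horizontal edges (which by Lemma \ref{lem: flip unshadowed} contribute trivially to the quantum weight and match the factor $\binom{h-2a}{t}_{q^4}$ already extracted by the recursion). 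I would compute, by expanding the corresponding words directly with Rupel's weighting, that the $a_\infty + 1$ admissible splits $(i,j)$ contribute the monomials $q^{-2a_\infty},\, q^{-2a_\infty+4},\ldots,\, q^{2a_\infty}$, whose sum is $[a_\infty+1]_{q^4}$. This identification of the stable-pair sum with the non-cluster-wall quantum weight closes the match between the two recursions and, together with the fact that the sequence $(a_0,a_1,\ldots)$ recovers the bending multiplicities uniquely, proves that $\phi$ is a $q$-weighted bijection.
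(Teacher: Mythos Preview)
Your proposal is correct and follows essentially the same approach as the paper: both argue that successive applications of $\ttheta$ yield a quantum recursion for compatible pairs matching the broken-line recursion of Lemma~\ref{lem: bl recursion pos}, with the stable regime handled via Proposition~\ref{prop: ttheta stable}. Your write-up is considerably more detailed than the paper's, which is a brief sketch; in particular, the paper does not spell out the verification that the $a_\infty+1$ stable splits $(i,j)$ contribute the monomials summing to $[a_\infty+1]_{q^4}$, whereas you make this terminal step explicit.
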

\begin{proof}
The proof follows similarly to that of \autoref{thm: phi q weighted bijection}, since the map $\phi$ is still defined by taking successive applications of the map $\ttheta$ and extracting the associated quantum binomial coefficient. One can obtain a quantum recursion for the compatible pairs using \autoref{prop: ttheta kron compatibility} and  \autoref{prop: ttheta stable} that matches the recursion given for broken lines of positive angular momentum given in \autoref{lem: bl recursion pos}.  Thus, we can conclude that $\phi$ is a bijection that preserves quantum weights.
\end{proof}

\subsection{\texorpdfstring{The theta basis element with initial exponent $(m,m)$}{The theta basis element with initial exponent (m,m)}}

The only elements of the theta basis in $\calA_q(2,2)$ that are not cluster monomials are the theta functions $\vartheta_{Q,(m,m)}$ (where $Q$ is a generic point in the first quadrant).  In contrast to the cluster monomial case, a quantum weighting for the corresponding compatible pairs on $\calP(m,m)$ has not yet been constructed.  We begin by defining such a quantization.  This quantization is defined recursively, rather than as a sum over pairs of edges as in Rupel's quantization.  This definition is convenient for our recursive approach to the bijection, though we expect that a more natural generalization of Rupel's quantization can be formulated.

The quantum weighting will be defined recursively using the map $\theta$ between compatible pairs constructed by Lee--Li--Zelevinsky (see \autoref{def: theta equivalent def} for the definition).  Note that we use the map $\theta$ rather than $\ttheta$ because the statement in \autoref{lem: lambda inverse compatibility} fails for the Dyck path $\calP(m,m)$.  This recursion involves choosing a term of a quantum binomial coefficient according to the image of a compatible pair under $\theta$.

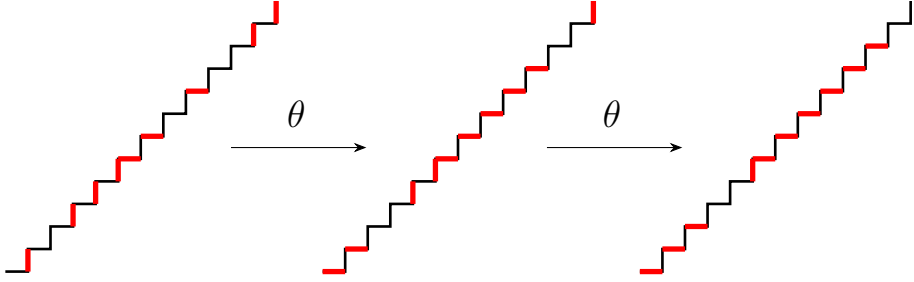
\begin{figure}
\[
\begin{tikzpicture}[scale=.3]
\draw[line width=1,color=black] (0,0)--(1,0)--(1,1)--(2,1)--(2,2)--(3,2)--(3,3)--(4,3)--(4,4)--(5,4)--(5,5)--(6,5)--(6,6)--(7,6)--(7,7)--(8,7)--(8,8)--(9,8)--(9,9)--(10,9)--(10,10)--(11,10)--(11,11)--(12,11)--(12,12);
\draw[line width=2pt, color=red] (1,0)--++(0,1);
\draw[line width=2pt, color=red] (3,2)--++(0,1);
\draw[line width=2pt, color=red] (4,3)--++(0,1);
\draw[line width=2pt, color=red] (5,4)--++(0,1);
\draw[line width=2pt, color=red] (11,10)--++(0,1);
\draw[line width=2pt, color=red] (12,11)--++(0,1);
\draw[line width=2pt, color=red] (5,5)--++(1,0);
\draw[line width=2pt, color=red] (6,6)--++(1,0);
\draw[line width=2pt, color=red] (8,8)--++(1,0);
\draw[line width=1,color=black,xshift=400] (0,0)--(1,0)--(1,1)--(2,1)--(2,2)--(3,2)--(3,3)--(4,3)--(4,4)--(5,4)--(5,5)--(6,5)--(6,6)--(7,6)--(7,7)--(8,7)--(8,8)--(9,8)--(9,9)--(10,9)--(10,10)--(11,10)--(11,11)--(12,11)--(12,12);
\draw (12,7) node[anchor=west]  {\large$\theta$};
\draw[-Stealth] (10,5.5)--(16,5.5);
\draw (26,7) node[anchor=west]  {\large$\theta$};
\draw[-Stealth] (24,5.5)--(30,5.5);
\draw[line width=2pt, color=red,xshift=400] (4,3)--++(0,1);
\draw[line width=2pt, color=red,xshift=400] (5,4)--++(0,1);
\draw[line width=2pt, color=red,xshift=400] (12,11)--++(0,1);
\draw[line width=2pt, color=red,xshift=400] (1,1)--++(1,0);
\draw[line width=2pt, color=red,xshift=400] (0,0)--(1,0);
\draw[line width=2pt, color=red,xshift=400] (5,5)--++(1,0);
\draw[line width=2pt, color=red,xshift=400] (6,6)--++(1,0);
\draw[line width=2pt, color=red,xshift=400] (7,7)--++(1,0);
\draw[line width=2pt, color=red,xshift=400] (8,8)--++(1,0);
\draw[line width=2pt, color=red,xshift=400] (9,9)--++(1,0);
\draw[line width=1,color=black,xshift=800] (0,0)--(1,0)--(1,1)--(2,1)--(2,2)--(3,2)--(3,3)--(4,3)--(4,4)--(5,4)--(5,5)--(6,5)--(6,6)--(7,6)--(7,7)--(8,7)--(8,8)--(9,8)--(9,9)--(10,9)--(10,10)--(11,10)--(11,11)--(12,11)--(12,12);
\draw[line width=2pt, color=red,xshift=800] (5,4)--++(0,1);
\draw[line width=2pt, color=red,xshift=800] (0,0)--++(1,0);
\draw[line width=2pt, color=red,xshift=800] (1,1)--++(1,0);
\draw[line width=2pt, color=red,xshift=800] (2,2)--++(1,0);
\draw[line width=2pt, color=red,xshift=800] (5,5)--++(1,0);
\draw[line width=2pt, color=red,xshift=800] (6,6)--++(1,0);
\draw[line width=2pt, color=red,xshift=800] (7,7)--++(1,0);
\draw[line width=2pt, color=red,xshift=800] (8,8)--++(1,0);
\draw[line width=2pt, color=red,xshift=800] (9,9)--++(1,0);
\draw[line width=2pt, color=red,xshift=800] (10,10)--++(1,0);
\end{tikzpicture}
\]
\caption{The leftmost diagram represents a compatible pair in $\CP_-(12,12,3,6)$.  The middle image is its image under $\theta$, which is a compatible pair in $\CP_-(12,12,7,3)$.  The rightmost image is the compatible pair in $\CP_-(12,12,9,1)$ resulting from another application of $\theta$.  Any subsequent applications of $\theta$ result in the empty compatible pair.}
\end{figure}

The quantum binomial coefficient $\binom{h}{k}_q$ is naturally viewed as a sum over size-$k$ subsets of $[h]$ of monomial terms, i.e,  $\binom{h}{k}_q = q^c\sum_{\substack{J \subseteq [h]//|J| = k}} q^{2\sum_{j \in J} j}$ for some constant $c = c(h,k)$.  Given a size-$k$ subset $J \subseteq [h]$, let $\binom{[h]}{J}_q$ denote the exponent of the monomial corresponding to $J$ in $\binom{h}{k}_q$.

\begin{defn}\label{defn: kron quantum weight}
The quantum weight $w_q(S_1,S_2)$ of a compatible pair $(S_1,S_2)$ on $\calP(m,m)$ can be determined recursively as follows:

As a base case, we set $w_q(\calP_1,\emptyset) = 0$.

Let $U = \calP_1 \cut \sh(S_2) = \{\eta_{i_1},\dots,\eta_{i_h}\}$ be the set of horizontal edges that are not in the shadow of $S_2$, where $i_1 < \cdots < i_h$.  Then  $U \cut S_1 = \{\eta_{i_j} : j \in J\}$ for some $J \subseteq \{1,\dots,h\}$.  We then set 
$$w_q(S_1,S_2) =  w_q(S_1',S_2') + \binom{[h]}{J}_{q^4}\,,$$
where $(S_1',S_2') = \theta(S_1,S_2)$.
\end{defn}

Note that, if $(S_1',S_2') = \theta(S_1,S_2)$, then $|S_1'| \geq \min(m,|S_1| + 1)$ and $|S_2'| \leq \max(0,|S_2| - 1)$.  Thus, any compatible pair is sent to $(\calP_1,\emptyset)$ under finitely many applications of $\theta$, so \autoref{defn: kron quantum weight} is well-defined.  

Using the quantum weighting, we can construct a map from compatible pairs to broken lines that respects quantum weights.  Let $(S_1^{(i)},S_2^{(i)}) = \underbrace{\theta\circ \cdots \circ \theta}_{i}(S_1,S_2)$.

\begin{defn}
Let $\phi: \CP(m,m,a,b) \to \BL_{-}(m,m,a,b)$ be the map taking the compatible pair $(S_1,S_2)$  to the broken line that bending at the wall $\frakd_{(\ell-1)/\ell}$ with multiplicity $h + |S_2^{(\ell + 1)}| -2|S_2^{(\ell)}|-|S_1^{(\ell)}|$.   
\end{defn}

\begin{thm}\label{thm: phi kron theta bijection}
If $2b \leq m$, the map $\phi: \CP(m,m,a,b) \to \BL_{-}(m,m,a,b)$ is a $q$-weighted bijection. 
\end{thm}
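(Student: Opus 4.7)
The plan is to mirror the proof of \autoref{thm: phi q weighted bijection}, replacing Rupel's weighting by the recursive weighting of \autoref{defn: kron quantum weight}. Since both $\phi$ and $w_q$ are defined via iterated application of $\theta$, the theorem should reduce to matching a compatible-pair recursion with the broken-line recursion \autoref{lem: broken line recursion kronecker greedy}.

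First, I would establish the compatible-pair recursion
$$|\CP(m,m,a,b)|_q \;=\; \sum_{t=0}^{a} \binom{m - 2b}{t}_{q^{4}}\, |\CP(m,m,\, 2(a-t)-b,\, a-t)|_q,$$
which exactly parallels \autoref{lem: broken line recursion kronecker greedy}. By \autoref{defn: kron quantum weight}, each weight $w_q(S_1,S_2)$ decomposes as $w_q(\theta(S_1,S_2))$ plus a monomial $\binom{[m-2b]}{J}_{q^{4}}$, where $J \subseteq [m-2b]$ encodes the unshadowed horizontal edges lying outside $S_1$. Summing these monomial contributions over all $J$ of fixed size $t$ yields $\binom{m-2b}{t}_{q^{4}}$, and via the multiplicity formula defining $\phi$, the parameter $t$ corresponds to the multiplicity of bending at the innermost wall. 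Combined with a parameter analysis of $\theta$ restricted to pairs with $|J|=t$, this yields the desired compatible-pair recursion.

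Next, I would proceed by induction on the depth of $\theta$-iteration needed to reach $(\calP_1, \emptyset)$, which is finite since $|S_2^{(\ell)}|$ strictly decreases to zero by the stated monotonicity property of $\theta$. The base case has $w_q(\calP_1, \emptyset) = 0$ matching the trivial broken line of weight $1$. For the inductive step, fix $\gamma \in \BL_-(m,m,a,b)$ with innermost bending multiplicity $t$; by the parameter matching, the image $\theta(S_1, S_2)$ lies in $\phi^{-1}(\gamma')$, where $\gamma'$ is the reduced broken line obtained from $\gamma$ by peeling off that bending and reindexing the remaining walls as in the proof of \autoref{lem: broken line recursion kronecker greedy}. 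Applying the induction hypothesis to $\gamma'$ and combining with the $\binom{m-2b}{t}_{q^{4}}$ contribution yields $\sum_{(S_1,S_2)\in \phi^{-1}(\gamma)} w_q(S_1,S_2) = |\gamma|_q$, as required.

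The main obstacle is the parameter tracking under $\theta$. Unlike in Section \ref{sec: bijection for r-kronecker}, where $\ttheta$ combined with the cascade framework gave clean parameter shifts, \autoref{defn: kron quantum weight} invokes the original Lee--Li--Zelevinsky map $\theta$ (since $\lambda^{-1}$ need not preserve compatibility on $\calP(m,m)$). One must carefully verify that the iterated size sequence $(|S_1^{(\ell)}|, |S_2^{(\ell)}|)$ reproduces non-negative bending multiplicities $m + |S_2^{(\ell+1)}| - 2|S_2^{(\ell)}| - |S_1^{(\ell)}|$ matching the scattering-diagram structure, and that restricting $\theta$ to pairs with $|J|=t$ lands inside $\CP(m,m, 2(a-t)-b, a-t)$ in a manner consistent with the broken-line wall reindexing. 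A secondary technical point is confirming the well-posedness of $\phi$ on the full domain $\CP(m,m,a,b)$ and the bar-invariance of the resulting Laurent expansion.
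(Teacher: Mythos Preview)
Your proposal is correct and follows essentially the same approach as the paper: both arguments observe that the recursive definition of $w_q$ in \autoref{defn: kron quantum weight} was designed precisely so that each application of $\theta$ contributes a term of the quantum binomial $\binom{m-2|S_2^{(\ell)}|}{|J|}_{q^4}$, and that $|J|$ coincides with the bending multiplicity $m + |S_2^{(\ell+1)}| - 2|S_2^{(\ell)}| - |S_1^{(\ell)}|$ at the wall $\frakd_{(\ell-1)/\ell}$ appearing in the definition of $\phi$. The paper's proof is a two-sentence unwinding of this observation; your version carries out the same matching but packages it as an explicit recursion paired with \autoref{lem: broken line recursion kronecker greedy} and an induction on $\theta$-depth, which is more detailed than strictly necessary since here the quantum weighting is \emph{defined} to make the binomial factors agree (unlike in \autoref{thm: phi q weighted bijection}, where Rupel's independent weighting had to be shown compatible with $\ttheta$). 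Your listed obstacles---parameter tracking under $\theta$, nonnegativity of the multiplicities, and bar-invariance---are either automatic from the definition or not needed for the statement.
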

\begin{proof}
The size of the set $J$ in applying \autoref{defn: kron quantum weight} to the compatible pair $(S_1^{(\ell)},S_2^{(\ell)})$ is precisely $h + |S_2^{(\ell + 1)}| -2|S_2^{(\ell)}|-|S_1^{(\ell)}|$.  Thus, the binomial coefficient factor that a broken line attains by bending at the wall $\frakd_{(\ell-1)/\ell}$ is the same as the binomial coefficient in \autoref{defn: kron quantum weight} attained by applying $\theta$ to the set of corresponding compatible pairs.
\end{proof}

Note that the positive angular momentum case is symmetric to the negative angular momentum case by reflecting the scattering diagram along the line $y = x$.  As a result of the bijectivity of $\phi$, we obtain an expansion formula in terms of compatible pairs for the quantum theta basis element $\theta_{(m,m),q}$.  

\begin{thm}\label{thm: kron theta basis elt expansion}
    The theta basis element $\vartheta_{Q,(m,m)}$ in $\calA_q(2,2)$ is given by 
    $$\vartheta_{Q,(m,m)} = \sum_{(S_1,S_2)} q^{w_q(S_1,S_2)} X_1^{-m + r|S_2|}X_2^{-m + r|S_1|}\,, $$
    where the sum ranges over all compatible pairs on $\calP(m,m)$.
\end{thm}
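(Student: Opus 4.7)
The plan is to decompose the theta function as a sum over broken lines and then apply the bijection $\phi$ from \autoref{thm: phi kron theta bijection} together with the reflection symmetry of the Kronecker scattering diagram. Since the initial exponent $(-m,-m)$ is parallel to the non-cluster wall $\frakd_1$, the broken lines contributing to $\vartheta_{Q,(m,m)}$ bend only over cluster walls, and by \autoref{lem: broken line slope} these split into those of negative angular momentum (bending over walls $\frakd_{(\ell-1)/\ell}$ of slope less than $1$) and those of positive angular momentum (bending over walls $\frakd_{\ell/(\ell-1)}$ of slope greater than $1$).

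For the negative angular momentum side, \autoref{thm: phi kron theta bijection} provides a $q$-weighted bijection $\phi \colon \CP(m,m,a,b) \to \BL_-(m,m,a,b)$ for each $(a,b)$ with $2b \leq m$, giving the contribution
\[
\sum_{\gamma \in \BL_-(m,m)} \Mono_q(\gamma) = \sum_{\substack{(S_1,S_2) \in \CP(m,m) \\ 2|S_2| \leq m}} q^{w_q(S_1,S_2)} X_1^{-m+2|S_2|} X_2^{-m+2|S_1|}.
\]
For the positive angular momentum side, we use the involution of $\frakD_2$ given by reflection along the line $y=x$, which exchanges $\frakd_{(\ell-1)/\ell} \leftrightarrow \frakd_{\ell/(\ell-1)}$, fixes $\frakd_1$, and swaps $X_1 \leftrightarrow X_2$. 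The path $\calP(m,m)$ is likewise self-symmetric under the involution exchanging horizontal and vertical edges, which acts on compatible pairs by swapping $S_1$ and $S_2$. Composing this reflection with $\phi$ yields a $q$-weighted bijection between compatible pairs $(S_1,S_2)$ with $2|S_2| > m$ and positive angular momentum broken lines in $\BL_+(m,m)$, with matching monomial contributions $X_1^{-m+2|S_2|} X_2^{-m+2|S_1|}$.

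The compatibility condition on $\calP(m,m)$ forces $|S_1|+|S_2|\leq m$, since each vertical edge $\nu_j\in S_2$ precludes its immediately preceding horizontal edge from $S_1$. Hence the conditions $2|S_2|\leq m$ and $2|S_2|>m$ partition the compatible pairs disjointly and exhaustively, so combining the two bijections produces the claimed expansion. The main obstacle will be to verify that the recursively-defined quantum weight $w_q$ from \autoref{defn: kron quantum weight} is consistent with the quantum weights of broken lines under both bijections; in particular, one must check that the quantum binomial factors arising from the bending multiplicities on each side of $\frakd_1$ are correctly captured by the $\theta$-recursion defining $w_q$, and handle boundary cases when $|S_1|$ and $|S_2|$ are close to $m/2$.
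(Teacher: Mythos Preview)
Your approach is essentially the same as the paper's: split the broken lines contributing to $\vartheta_{Q,(m,m)}$ by the sign of angular momentum, invoke \autoref{thm: phi kron theta bijection} on the negative side, and handle the positive side by the reflection symmetry of $\frakD_2$ along $y=x$ (which the paper states in the sentence immediately preceding the theorem). Your observation that the initial exponent $(-m,-m)$ is parallel to $\frakd_1$, so no broken line bends there, and your use of $|S_1|+|S_2|\le m$ to see that the two halves partition $\CP(\calP(m,m))$, are the right supporting facts.

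The ``main obstacle'' you flag at the end is largely illusory. On the negative side, the consistency of the $\theta$-recursion defining $w_q$ with the broken-line quantum weights is \emph{exactly} the content of \autoref{thm: phi kron theta bijection}; there is nothing further to verify. On the positive side, the paper treats the matter in one line via symmetry: the reflection of $\calP(m,m)$ along the anti-diagonal swaps $(S_1,S_2)\mapsto(S_2,S_1)$, and one takes the quantum weight on that side to be the value of $w_q$ on the reflected pair (which then has $2|S_2|\le m$, so \autoref{thm: phi kron theta bijection} applies). If you want to insist that $w_q$ on the positive side is literally the output of \autoref{defn: kron quantum weight}, then you would indeed need to check that this recursion is invariant under the reflection---but the paper does not do this either, and simply uses the symmetry. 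So your proposal is complete at the same level of rigor as the paper once you drop the hedging in the last paragraph.
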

\begin{proof}
By definition, the theta function $\vartheta_{Q,(m,m)}$ can be expressed as a sum over broken lines.  Using \autoref{thm: phi kron theta bijection}, the expansion formula in terms of compatible pairs follows directly by applying the bijection $\phi$ to the corresponding broken lines.
\end{proof}

\subsection{Equivalence of bases for the quantum Kronecker cluster algebra}\label{subsec: bases}

Lastly, we show that, for the quantum Kronecker cluster algebra, the quantum theta basis coincides with a number of other bases.  The bases we consider are
\begin{enumerate}
\item\label{it: bracelet} the quantum bracelets basis \cite[Definition 4.9]{Thu14},
\item\label{it: theta} the quantum theta basis \cite[Proposition 3.1]{davison2021strong}*,
\item\label{it: DX} the basis $\mathcal{B}$ consisting of the quantum cluster monomials and $\{z_n\}_{n \geq 1}$ in \cite[Definition 3.4]{DX},
\item\label{it: triangular} the quantum triangular basis \cite[Theorem 1.4]{BZtriangle}, and
\item\label{it: greedy} the quantum greedy basis \cite[Theorem 9]{LLRZ}.
\end{enumerate}
\blfootnote{*This work uses a quantization of the cluster algebra with principal coefficients.  In this case, the mutable variables commute, while the principal coefficients $y_1$ and $y_2$ quasi-commute.  In order to reconcile this with the coefficient-free case, one can reorder $y_1^{\pm 1}$ and $y_2^{\pm 1}$ if necessary and then specialize the principal coefficients to 1.}

The following discussion concerns only the quantum Kronecker cluster algebra, though these bases may coincide in a more general setting.  All bases mentioned contain the cluster monomials, so it is enough to show that the remaining elements coincide.

The fact that (\ref{it: bracelet}) and (\ref{it: theta}) coincide is due to the recent work of Mandel and Qin \cite[Theorem 1.4]{mandel2023bracelets}, who showed that the theta and bracelets bases coincide in a much more general setting.  

To show that (\ref{it: theta}) and (\ref{it: DX}) coincide, it is enough to check that (the specialization of) $\vartheta_{Q,(1,-1,0,0)}$ and $z_1$ are equal.  This is because the $z_n$'s satisfy the Chebyshev recursion of the first kind by definition, while Mandel and Qin \cite[Example 5.8]{mandel2023bracelets} showed that the same recursion holds for the theta functions $\vartheta_{Q,(n,-n,0,0)}$.  By comparing \cite[Lemma 3.3]{canakci2020expansion} and \cite[Lemma 8.4]{mandel2023bracelets}, it is readily seen that $z_1 = \vartheta_{Q,(1,-1,0,0)}|_{y_1 = y_2 = 1}$.

The bases (\ref{it: DX}) and (\ref{it: triangular}) can be shown to coincide by recent work of Li \cite[Theorem 1.2]{Li} which determine the support of triangular basis elements for skew-symmetric rank-$2$ quantum cluster algebras.  It is then straightforward to check that the supports of the elements $z_n$ are equal to the supports of the non-cluster-monomial elements in the triangular basis, so the bases coincide.

We can then show that (\ref{it: DX}) and (\ref{it: greedy}) coincide by directly verifying that the elements $z_n$ satisfy the recursion given in \cite[Theorem 7]{LLRZ}, which characterizes the greedy basis. We do so using the Laurent expansion of $z_n$ given by the work of Ding-Xu. It follows directly from \cite[Proposition 4.4]{DX} that $z_n = s_n - s_{n-2}$, where $s_n$ is the element 
\begin{equation}\label{eq: DX expansion}s_n = \sum_{p+r \leq n} \binom{n-r}{p}_q \binom{n-p}{r}_q q^{-(2p-n)(2r-n)}X_1^{2p-n}X_2^{2r-n}\,.\end{equation}  

This is a special case of the $q$-analogue of the following lemma, which can be easily proved by the standard generating function method.
\begin{lem}
For any nonnegative integers $n,p,r$ with $p \geq r$, we have
$$\sum_{k=0}^p (-1)^k \binom{n-1-r}{p-k}P(k)\binom{n-2r+k-1}{k}=0,
$$where $P(x)$ is any polynomial in $x$ of degree $\le r-1$. 
\end{lem}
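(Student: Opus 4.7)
The plan is to reduce the alternating sum to a Vandermonde convolution and then exploit the fact that the resulting upper binomial index becomes strictly smaller than the lower index once $\deg P \le r-1$ and $p \ge r$.

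First I would clear the sign by applying the negation identity $\binom{n-2r+k-1}{k} = (-1)^k \binom{2r-n}{k}$, valid when binomial coefficients are read as polynomials in the upper argument. The factors of $(-1)^k$ cancel with those already in the sum, so it suffices to prove
\[
\sum_{k=0}^p \binom{n-1-r}{p-k}\binom{2r-n}{k} P(k) = 0.
\]
By linearity in $P$, it is enough to verify this for a basis of the space of polynomials of degree at most $r-1$. I would pick the falling-factorial basis $P(k) = k^{\underline{j}} := k(k-1)\cdots(k-j+1)$ for $0 \le j \le r-1$, since it satisfies the clean absorption identity
\[
k^{\underline{j}}\binom{2r-n}{k} = (2r-n)^{\underline{j}}\binom{2r-n-j}{k-j},
\]
which is precisely what the generating-function manipulation $z^j \frac{d^j}{dz^j}(1+z)^{2r-n}$ encodes.

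After substituting the absorption identity and reindexing by $k' = k-j$, the sum collapses via Vandermonde's convolution to
\[
(2r-n)^{\underline{j}} \sum_{k'=0}^{p-j}\binom{n-1-r}{(p-j)-k'}\binom{2r-n-j}{k'} = (2r-n)^{\underline{j}}\binom{r-1-j}{p-j}.
\]
Since $0 \le j \le r-1$ and $p \ge r$, we have $0 \le r-1-j < p-j$, forcing $\binom{r-1-j}{p-j} = 0$, so the sum vanishes as required.

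The only subtlety will be the bookkeeping around binomial coefficients with possibly negative upper arguments: I will consistently interpret $\binom{a}{k} := a(a-1)\cdots(a-k+1)/k!$ as a polynomial in $a$, which makes the negation identity, the absorption identity, and Vandermonde's convolution all valid regardless of the sign of $2r-n$. With this convention in place there is no real obstacle; the proof is a short chain of standard identities, consistent with the authors' characterization of the lemma as being ``easily proved by the standard generating function method.''
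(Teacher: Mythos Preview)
Your proof is correct and is precisely the combinatorial unpacking of the ``standard generating function method'' the paper invokes: the negation identity removes the sign, the falling-factorial absorption $k^{\underline{j}}\binom{2r-n}{k}=(2r-n)^{\underline{j}}\binom{2r-n-j}{k-j}$ is exactly the effect of applying $x^{j}\tfrac{d^{j}}{dx^{j}}$ to $(1+x)^{2r-n}$, and the Vandermonde step is the coefficient extraction from $(1+x)^{n-1-r}(1+x)^{2r-n-j}=(1+x)^{r-1-j}$. The final vanishing $\binom{r-1-j}{p-j}=0$ for $0\le j\le r-1$ and $p\ge r$ is exactly the observation that $(1+x)^{r-1-j}$ has no $x^{p-j}$ term, so your argument and the paper's intended one coincide.
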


\begin{rem}
    We note that the basis consisting of the cluster monomials and $\{s_n\}_{n \geq 1}$ that appears in the work of {\c{C}}anak{\c{c}}{\i} and Lampe \cite[Definition 4.15]{canakci2020expansion}* is not identical to these other bases.  It instead coincides (after specializing and possibly reordering the principal coefficients) with the basis $\mathcal{S}$ from the work of Ding-Xu \cite[Definition 3.4]{DX}.  The elements $s_n$ are related to the elements $z_n$ in the basis $\mathcal{B}$, and their Laurent expansion is given in \autoref{eq: DX expansion}.   The work of {\c{C}}anak{\c{c}}{\i} and Lampe was later generalized to quantum cluster algebras from unpunctured orbifolds by Min Huang \cite{Huang}.
\end{rem}

\addtocontents{toc}{\protect\setcounter{tocdepth}{0}}
\section*{Acknowledgements}
We are thankful to {\.I}lke {\c{C}}anak{\c{c}}{\i}, Philipp Lampe, Li Li, Travis Mandel, Fan Qin, and Dylan Rupel for helping us to resolve the connections between various bases for the quantum Kronecker cluster algebra.  We additionally thank Mark Gross, Gregg Musiker, Lang Mou, Salvatore Stella, and Harold Williams for their helpful comments. The first author was supported by the NSF GRFP and the Jack Kent Cooke Foundation. The second author was supported by the University of Alabama, Korea Institute for Advanced Study, and the NSF grants DMS 2042786 and DMS 2302620.

\bibliographystyle{amsplain}
\bibliography{bibliography.bib} 
\addtocontents{toc}{\protect\setcounter{tocdepth}{1}}
\end{document}